\numberwithin{figure}{section}
\theoremstyle{plain}
\newtheorem{thm}{Theorem}[section]
\newtheorem{prop}[thm]{Proposition}
\newtheorem{cor}[thm]{Corollary}
\newtheorem{lemma}[thm]{Lemma}
\numberwithin{equation}{section}
\theoremstyle{remark}
\newtheorem{rmq}[thm]{Remark}
\newtheorem{dfn}[thm]{Definition}
\renewcommand\Re{\mathrm{Re}\,} \renewcommand\Im{\mathrm{Im}\,}
\newcommand\R{{\mathbb R}} \newcommand\N{{\mathbb N}}
\newcommand\C{{\mathbb C}}
\renewcommand{\Im}{  \text{Im}   }
\renewcommand{\Re}{  \text{Re}   }
\def\e{\varepsilon} \def\cdotv{\raise 2pt\hbox{,}}
\renewcommand\Re{\mathrm{Re}\,} \renewcommand\Im{\mathrm{Im}\,}
\def\@tvsp{\mathchoice{{}\mkern-4.5mu}{{}\mkern-4.5mu}{{}\mkern-2.5mu}{}}
\def\ltrivert{\left|\@tvsp\left|\@tvsp\left|}
\def\rtrivert{\right|\@tvsp\right|\@tvsp\right|}
\def\e{\varepsilon} \def\cdotv{\raise 2pt\hbox{,}}
\newcommand{\del}{\partial}
\newcommand{\ii}{{\rm i}}
\begin{document}

\parindent=0pt

\title[Dispersion for wave and Schr\"{o}dinger outside a ball and counterexamples]{Dispersion for the wave and Schr\"{o}dinger equations outside a ball and counterexamples}

\author{Oana Ivanovici${}^{1}$}
\address{${}^{1}$Sorbonne Université, CNRS, LJLL, F-75005 Paris, France} 
\email{oana.ivanovici@math.cnrs.fr}

 \thanks{Oana Ivanovici was supported by ERC grant ANADEL 757 996.} 
\maketitle 

 \date{\today}

\maketitle

\begin{abstract}
We consider the wave equation with Dirichlet boundary conditions in the exterior of the unit ball $B_{d}(0,1)$ of $\mathbb{R}^d$. For $d=3$, we obtain a global in time parametrix and derive sharp dispersive estimates, matching the $\mathbb{R}^{3}$ case, for all frequencies (low and high). For $d\geq 4$, we provide an explicit solution at large frequency $1/h$, $h\in (0,1)$, with a smoothed Dirac data at a point at distance $h^{-1/3}$ from the origin in $\R^d$ whose decay rate exhibits $h^{-(d-3)/3}$ loss with respect to the boundary less case, that occurs at observation points around the mirror image of the source with respect to the center of the ball (at the Poisson-Arago spot). Similar counterexample are obtained for the Schrödinger flow. 

Moreover, we generalize these counterexamples, first announced in \cite{ildispext}, to the case of the wave and Schrödinger equations outside cylindrical domains of the form $B_{d_1}(0,1)\times \mathbb{R}^{d_2}$ in $\mathbb{R}^d$ with $d=d_1+d_2$ and $d_1\geq 4$, for which we construct solutions, as done \cite{IaIv23} for $d_1=2$, $d_2=1$, whose decay rates exhibit a $h^{-(d_1-3)/3}$ loss with respect to the boundary less case (at observation points around the mirror image of the source with respect to the origin). 
\end{abstract}

\section{Introduction}
We consider the linear wave
equation on an exterior domain $\Omega\subset \mathbb{R}^{d}$ with smooth boundary; we let $\Delta_\Omega$ denote the Laplacian with constant coefficients and Dirichlet boundary conditions on $\Omega$ :
\begin{equation} \label{WE} 
\left\{ \begin{array}{l}
   (\partial^2_t-
 \Delta_\Omega) u(\cdot,t)=0 \;\; \text{ in } \Omega, \quad
 u(\cdot,t)\Big|_{\partial\Omega}=0, \\ 
 u(\cdot,0) = u_0, \; \partial_t u(\cdot,0)=u_1.
 \end{array} \right.
 \end{equation}
We also address the Schrödinger equation
\begin{equation} \label{SchE} 
 (i\partial_t+\Delta_\Omega) v(\cdot,t)=0,  \;\; \text{ in } \Omega, \quad 
 v(\cdot,t)\Big|_{\partial\Omega} = 0,
\quad  v(\cdot,0) = v_0. 
 \end{equation}
Heuristically, dispersion relates to how waves spread out with time, while retaining their energy: it quantifies decay for waves' amplitude. In $\mathbb{R}^d$, the half-wave propagator $e^{\pm it\sqrt{-\Delta_{\R^d}}}$ can be computed explicitly, yielding the dispersion estimate, for  $\chi\in C_{0}^\infty
((1/2,2))$ and $D_{t}=-i\partial_{t}$,
\begin{equation}\label{disprd}
\|\chi(hD_t)e^{\pm it\sqrt{-\Delta_{\R^d}}}\|_{L^1(\mathbb{R}^d)\rightarrow L^{\infty}(\mathbb{R}^d)}\leq C(d)h^{-d}\min\{1,(h/|t|)^{\frac{d-1}{2}}\}\,.
\end{equation}
For the Schr\"odinger propagator, dispersion follows at once from its explicit Gaussian kernel:
\begin{equation}\label{dispschrodrd}
\|e^{\pm it\Delta_{\R^d}}\|_{L^1(\mathbb{R}^d)\rightarrow L^{\infty}(\mathbb{R}^d)}\leq C(d)|t|^{-d/2}\,.
\end{equation}
These estimates, together with the energy conservation, allow to obtain the whole set of Strichartz estimates (although the endpoints are more delicate, see \cite{keta98}). 
Strichartz estimates for ${\mathbb R}^d$ and manifolds without boundary have been understood for some time (see \cite{stri77},\cite{give85}, \cite{give95}, \cite{lev90}, \cite{ls95} for $\mathbb{R}^d$ and also \cite{sm98}, \cite{tat02} for low regularity metrics). Even though the boundary-less case has been well understood, obtaining results for the case of manifolds {\it with} boundary has been surprisingly elusive.

Our aim in the present paper is to prove dispersive bounds for the wave and the Schrödinger equations outside a ball in $\mathbb{R}^d$, as announced in \cite{ildispext} : for $d=3$ we prove that both linear flows satisfy the same dispersion bounds as in $\mathbb{R}^3$. In higher dimensions $d\geq 4$ we prove that these estimates cannot hold as in $\mathbb{R}^d$ as losses do appear at the Poisson-Arago spot. We then generalize this loss to exterior domains of the form $\R^d\setminus B_{d_1}(0,1)\times \mathbb{R}^{d_2}$ where $d=d_1+d_2$, $d_1 \geq 4$ (notice that, although our construction applies for $d_1=2,3$ as well, this case is uninteresting for our purposes as it is very similar to the one of \cite{IaIv23} where the dispersive bounds do hold as in the free case).

While several positive results on dispersive effects on exterior domains have been established since the mid-90's, the question about whether or not dispersion did hold remained completely open, even for the exterior of a ball. We recall that (local in time) Strichartz estimates were proved to hold as in $\mathbb{R}^{d}$ for the wave equation in \cite{smso95} and for the Schr\"odinger equation in \cite{doi08}, with arguments that did not require the dispersion estimate to be known (both estimates are known to extend globally in time using local energy decay or local smoothing estimates). Since there is no obvious concentration of energy, like in the case of a generic non-trapping obstacle, where concave portions of the boundary can act as mirrors and refocus wave packets, one would expect dispersive estimates to hold outside strictly convex obstacles (for spherically symmetric functions this was proved, outside a sphere in \cite{lismza12}). Our main  positive result is the following:

\begin{thm}\label{thmdisp3D}
Let $\Theta=B_3(0,1)\subset\mathbb{R}^3$ be the unit ball in $\mathbb{R}^3$ and set $\Omega=\mathbb{R}^3\setminus \Theta$. Let $\Delta_{\Omega}$ denote the Laplace operator in $\Omega$ with Dirichlet boundary condition and let $\chi\in C^{\infty}_0((\frac 12,2))$.  
\begin{enumerate}
\item Dispersion holds for the wave flow in $\Omega$ like in $\mathbb{R}^3$:
 \begin{equation}\label{dispomega3}
\|\chi(hD_t)e^{\pm it\sqrt{-\Delta_\Omega}}\|_{L^1(\Omega)\rightarrow L^{\infty}(\Omega)}\lesssim h^{-3}\min\{1,{h}/{|t|}\}.
\end{equation}
\item Dispersion holds for the classical Schr\"odinger flow in $\Omega$  like in $\mathbb{R}^3$: 
 \begin{equation}\label{dispschrodomega3}
\|e^{\pm it\Delta_\Omega}\|_{L^1(\Omega)\rightarrow L^{\infty}(\Omega)}\lesssim |t|^{-3/2}.
\end{equation}
\end{enumerate}
\end{thm}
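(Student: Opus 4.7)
My plan is to bound the Schwartz kernels of the two propagators pointwise, since the $L^1\to L^\infty$ operator norm equals the $L^\infty$-norm of the kernel. I will exploit the rotational symmetry of $\Omega=\mathbb{R}^3\setminus B(0,1)$ to obtain an explicit formula for each kernel and then perform stationary phase / steepest descent. Concretely, decomposing on spherical harmonics $Y_\ell^m$ and setting $u=v(r)Y_\ell^m(\omega)/r$, the operator $-\Delta_D$ restricted to the $\ell$-sector is unitarily equivalent to $L_\ell=-\partial_r^2+\ell(\ell+1)/r^2$ on $L^2((1,\infty),dr)$ with Dirichlet at $r=1$. Its generalized eigenfunction at energy $k^2$ is the explicit combination of the Riccati-Bessel functions $rj_\ell(kr), ry_\ell(kr)$ vanishing at $r=1$, with spectral density involving $|h_\ell^{(1)}(k)|^{-2}$, where $h_\ell^{(1)}=j_\ell+iy_\ell$ is the spherical Hankel function.

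Using the addition theorem $\sum_m Y_\ell^m(\omega_x)\overline{Y_\ell^m(\omega_y)}=\tfrac{2\ell+1}{4\pi}P_\ell(\cos\gamma)$, where $\gamma$ is the angle between the radial directions of $x$ and $y$, the wave kernel reduces to
\[
K(x,y,t)=\frac{C}{r_x r_y}\sum_{\ell\geq 0}(2\ell+1)P_\ell(\cos\gamma)\int_0^\infty e^{\pm itk}\chi(hk)\,e_\ell(r_x,k)\,e_\ell(r_y,k)\,dk,
\]
with $e_\ell(r,k)$ the radial eigenfunction; the Schr\"odinger kernel has the same structure, with phase $-itk^2$ in place of $\pm itk$ and no frequency cutoff. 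I would convert the sum over $\ell$ into an oscillatory integral over $\nu=\ell+\tfrac12$ (Poisson summation, or equivalently a Watson-type contour deformation), insert the Debye asymptotic expansions of $h_\ell^{(1)}$ together with the Mehler-Dirichlet integral representation of $P_\ell(\cos\gamma)$, and split the result into a direct-ray contribution (matching the $\mathbb{R}^3$ Green's function), a once-reflected-ray contribution, and exponentially small creeping-wave residues.

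A classical non-degenerate stationary-phase computation in $(k,\nu)$ on the direct-ray piece recovers the free bound $h^{-3}\min(1,h/|t|)$ and, for Schr\"odinger, $|t|^{-3/2}$. The reflected-ray piece has a caustic at the Poisson-Arago antipodal image; in $d=3$ this caustic is point-like in the transverse directions, so the associated stationary-phase retains the same decay rate, in contrast with the $d\geq 4$ case where the caustic focuses along a higher-dimensional set and produces extra amplification. The main obstacle is to carry the parametrix uniformly across all regimes in $(k,\nu,r_x,r_y,\gamma,t)$: the glancing zone $\nu\sim k$ forces Airy/Langer asymptotics in place of Debye, low frequencies $h\gtrsim 1$ need a separate treatment, and the transition between illuminated and shadow regions must be handled without losing sharpness. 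Welding these asymptotic regimes into a single parametrix with usable pointwise bounds, uniformly in $h$ and globally in $t$, is the technical heart of the argument.
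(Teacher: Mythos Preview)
Your approach overlaps with part of the paper's argument but the overall strategy is different. The paper uses the spherical-harmonics/Hankel decomposition only when \emph{both} $Q_0$ and $Q$ lie within a fixed small distance of $\partial\Omega$ (its Section~3.4, essentially your formula for $K$, handled by splitting the ratio $\nu/\tau$ into ranges and applying the Langer--Olver asymptotics directly to the sum, \emph{without} Poisson summation or Watson transform). When at least one point is away from the boundary the paper instead writes the reflected wave via the Kirchhoff integral over $\mathbb{S}^2$ of the Neumann trace, computes that trace from the Melrose--Taylor parametrix near glancing, and then does stationary phase on the boundary integral; this route was chosen partly because it carries over to general strictly convex obstacles. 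The Schr\"odinger estimate is then deduced from the wave estimate via the Kana\"i subordination formula rather than treated independently, and low frequencies are handled by classical Helmholtz integral-equation methods. Your pure-spectral route is in principle viable for the ball, and would be more self-contained; the paper's hybrid route buys generality and reuses existing parametrix technology.

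Two points your sketch underestimates. First, global-in-time: the paper invokes Melrose's exponential local-energy decay for odd-dimensional non-trapping exteriors to reduce to a finite time window $d(Q_0,\partial\Omega)+d(Q,\partial\Omega)\leq t\leq d(Q_0,\partial\Omega)+d(Q,\partial\Omega)+C_0$; without such a reduction your oscillatory integrals will not give bounds uniform in $t$. Second, your sentence ``in $d=3$ this caustic is point-like \ldots\ so the associated stationary-phase retains the same decay rate'' hides the real difficulty. At the Poisson--Arago configuration the boundary phase has a \emph{degenerate} critical point of order two (a cubic degeneration in the latitude after using rotational symmetry), so non-degenerate stationary phase fails outright. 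The paper brings this to Airy normal form and obtains, after the $\omega$-integration over $\mathbb{S}^1$, a factor $\tau^{1/3}/\sqrt{(r^2-1)(s^2-1)}$ times an integral $J_0$ for which one must prove $|J_0|\lesssim (\tau^{1/3}(y_c+y_0))^{-1}+O(1)$ when $\tau^{1/3}(y_c+y_0)\lesssim 1$; combining this with $y_c+y_0\sim t/(rs)$ recovers exactly $1/t$. In dimension $d$ the same computation produces instead $\tau^{(d-3)/3}$ extra growth, which is the content of the paper's counterexample theorem. So the operative mechanism is not ``point-like versus higher-dimensional caustic'' but a delicate cancellation between the Airy-type degeneracy and the geometric factor $(rs)^{-(d-1)/2}$, and this is where essentially all the work lies.
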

In Theorem \ref{thmdisp3D} and in the remaining of the paper, $A\lesssim B$ means that there exists a constant $C$ such that $A\leq CB$ and this constant may change from line to line but is independent of all parameters. It will be explicit when (very occasionally) needed. Similarly, $A\sim B$ means both $A\lesssim B$ and $B\lesssim A$.
\begin{rmq}
Having the full dispersion in $3D$ we can immediately obtain the endpoint Strichartz estimates, following \cite{keta98}. Theorem \ref{thmdisp3D} also helps in dealing with the non-linear Schr\"odinger equation (see e.g. the recent work \cite{NLSscatteringExt}, which makes use of our dispersion estimate to reprove the main result from \cite{kivizh12}, on global well-posedness for the defocusing energy critical NLS in $3D$.)
\end{rmq}
\begin{rmq}
We claim that Theorem \ref{thmdisp3D} still holds with $\Omega$ replaced by the exterior of an obstacle with smooth, strictly geodesically concave boundary. By the last condition we mean that the second fundamental form on the boundary is positive definite. The general case of Theorem \ref{thmdisp3D} requires arguments that may be seen as perturbative of those, mostly explicit, used in the exterior of a ball and will be dealt with elsewhere.
\end{rmq}

A loss in dispersion may be related to a cluster point: such clusters occur when optical rays, sent along different directions, are no longer diverging from each other. If a point source  illuminates a ball, by Huygens's principle every point of the obstacle acts as a new point source: diffraction by the obstacle deviates light on the boundary which arrives at the center of the shadow behind the obstacle in phase and constructively interferes. This results in a bright spot at the shadow's center. Therefore, our intuition tells us that a location where dispersion fails should be the Poisson-Arago spot (which really is part of the line joining the source and the center of the obstacle: in physical experiments, one may place a screen somewhat symmetrically to the source of light, hence the choice of wording): it turns out that, indeed, that region is brighter than the illuminated regime when $d\geq 4$.

Before stating our counterexamples to dispersion, let us describe the domain we consider (which may be either the exterior of a ball or the exterior of a cylindrical obstacle in $\R^d$).
Let $d\geq 4$ and consider a cylinder in $\mathbb{R}^d$ of the form $B_{d_1}(0,1)\times \mathbb{R}^{d_2}$ where $d_{1,2}$ are such that $d_1+d_2=d$ and $d_1\geq 3$, $d_2\geq 0$ (notice that, although our construction applies for $d_1=2$ as well, this case is uninteresting for our purposes as it is very similar to the one of \cite{IaIv23} where the dispersive bounds did hold as in the free case, as the diffractive effects in the shadow are weaker). Here $B_{d_1}(0,1)$ denotes the unit ball of center $0\in \mathbb{R}^{d_1}$. We define $\Omega_{d_1,d_2}:=\Big(\mathbb{R}^d\setminus B_{d_1}(0,1)\Big)\times \mathbb{R}^{d_2}$, whose boundary is of the form $\mathbb{S}^{d_1-1}\times \{z=(z_1,...,z_{d_2})\}$. Taking spherical  coordinates 
\begin{equation}\label{polarcoord}
\left\{ \begin{array}{l}
x_{d_1}=r\cos\varphi,\quad x_{d_1-1}=r\sin\varphi \times  \cos \omega_1, \\
x_k=r\sin\varphi \times \Big(\Pi_{j=1}^{d_1-k-1}\sin\omega_j\Big) \cos \omega_{d_1-k},  2\leq k\leq d_1-2, \quad
x_1=r\sin \varphi \times \Pi_{j=1}^{d_1-2}\sin\omega_j ,
 \end{array} \right.
\end{equation}
we obtain 
\begin{equation}\label{omega}
\Omega_{d_1,d_2}
=\{(r,\varphi,\omega, z), r\geq 1, \varphi\in [0,\pi], \omega_j\in [0,\pi], \forall 1\leq j\leq d_1-3, \omega_{d_1-2}\in [0,2\pi), z\in \mathbb{R}^{d_2}\}.
\end{equation}
In these coordinates, the usual Laplace operator takes the form 
\begin{equation}\label{laplaballrS2}
\Delta=\frac{1}{r^{d_1-1}}\frac{\partial}{\partial r}\Big(r^{d_1-1}\frac{\partial}{\partial r}  \Big)+\frac{1}{r^2}\Delta_{\mathbb{S}^{d_1-1}}+\Delta_{\mathbb{R}^{d_2}},
\end{equation}
where $\Delta_{\mathbb{R}^{d_2}}=\sum_{k=1}^{d_2}\partial^2_{z_k}$ is the usual Laplace operator in $\mathbb{R}^{d_2}$ acting on $z$, $\Delta_{\mathbb{S}^{n}}$ denotes the Laplace operator on $\mathbb{S}^{n}$ and 
$\Delta_{\mathbb{S}^{d_1-1}}=\frac{1}{(\sin\varphi)^{d_1-2}}\frac{\partial}{\partial\varphi}\Big((\sin\varphi)^{d_1-2}\frac{\partial}{\partial\varphi}\Big)+\frac{1}{(\sin\varphi)^2}\Delta_{\mathbb{S}^{d_1-2}}$. 

\begin{tabular}{cl}
 \parbox[b][3mm][t]{0.75\textwidth}{ \vskip-2.4cm {When $d_2=0$, $d_1=d$, the domain $\Omega_{d,0}=\mathbb{R}^d\setminus B_d(0,1)$ is the exterior of the sphere. In dimension $d=d_1=3$, $d_2=0$, the coordinates are shown in the picture, with $\varphi\in [0,\pi]$ and $\omega\in [0,2\pi)$. In higher dimension $d=d_1>3$, $\varphi$ remains unchanged and $\omega=(\omega_1,...,\omega_{d_1-2})$ with $ \omega_j\in [0,\pi], \forall 1\leq j\leq d_1-3, \omega_{d_1-2}\in [0,2\pi)$. When $d_2>0$, the "obstacle" is the product of the $d_1$-dimensional ball with $\R^{d_2}$.}}
 &
 \includegraphics[width=0.16\textwidth]{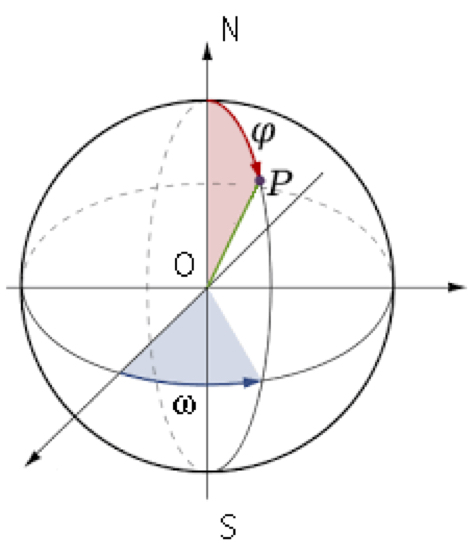}
\end{tabular}
%

\begin{thm}\label{thmCE}
Let $d\geq 4$, $d_{1,2}\in \N$ with $d=d_1+d_2$. Let $\Delta_{\Omega_{d_1,d_2}}$ be the Laplace operator in $\Omega_{d_1,d_2}$ with Dirichlet boundary condition, that is \eqref{laplaballrS2} with Dirichlet condition at $r=1$. Let $Q_{\pm}(s)\in \Omega_{d_1,d_2}\cap \R^{d_1}\times \{0\}^{d_2}$ be two points of $\Omega_{d_1,d_2}$, symmetric with respect to the center of the ball $B_{d_1}(0,1)\times \{0_{\mathbb{R}^{d_2}}\}$, at distance $s>1$ from $\{0\}\in \mathbb{R}^d$. Let $\chi\in C^{\infty}_0((\frac 12,2))$ be a smooth cut-off equal to $1$ near $1$, $0\leq \chi\leq 1$.
\vskip2mm

For the wave operator, there exists $0<h_0<1$ such that, for all $0<h\leq h_0$ and for $s\sim h^{-1/3}$ 
the following estimates hold at $t\sim 2(\sqrt{s^2-1}+\arcsin \frac 1s)\sim 2 h^{-1/3}$
\[
\Big|\chi(hD_t)e^{it\sqrt{-\Delta_{\Omega_{d_1,d_2}}}}(\delta_{Q_+(s)})\Big| (Q_-(s))\sim h^{-d}\Big(\frac ht\Big)^{\frac{d-1}{2}} h^{-\frac{d_1-3}{3}}.
\]

For the Schr\"odinger operator, let $0<h\leq h_0^2$, $s\sim h^{-1/6}$, $t\sim h^{1/3}$. Then the following holds
\[
\Big|\chi(hD_t)e^{it\Delta_{\Omega_{d_1,d_2}}}(\delta_{Q_+(s)})\Big| (Q_-(s))\sim h^{-\frac d6-\frac{d_1-3}{6}}, \quad h^{-\frac d6}\sim t^{-\frac d2}.
\]
For $d_1\geq 4$, these estimates contradict the usual (flat) ones \eqref{dispschrodrd}.
\end{thm}

\begin{rmq}
For $d=d_1\geq 4$, Theorem \ref{thmCE} provides a first example of a domain on which global in time Strichartz estimates do hold like in $\mathbb{R}^d$ (see \cite{smso95}, \cite{doi08}) while dispersion fails.
\end{rmq}
\begin{rmq}
The proof of Theorem \ref{thmCE} is based on an explicit parametrix construction : when $d_2=0$, such a parametrix is obtained in the proof of Theorem \ref{thmdisp3D} in $3D$ and then generalized to higher dimensions. When $d_2\geq 1$, it is a generalization of the one from \cite{IaIv23}, where $d_1=2$ and $d_2=1$, where sharp dispersive estimates for the wave equation (as in Theorem \ref{thmdisp3D}) have been proved in the domain $\Omega_{2,1}$ with Laplace operator \eqref{laplaballrS2}.
\end{rmq}

\begin{rmq}\label{rmqellipse}
In physical experiments the Arago spot is very sensitive to small-scale deviations from the ideal circular cross-section: if there is no "source" point whose apparent contour (defined as the boundary of the set of points that can be viewed from the source) may produce constructive interferences, then the odds of obtaining a bright spot in the shadow region drastically decrease. If the cross-section of the obstacle deviates from a circle, the shape of the Poisson-Arago spot changes and becomes a caustic. In particular, if the object has an ellipsoidal cross-section, the Poisson-Arago spot has the shape of an evolute (i.e. the locus of the center of curvature while moving along the ellipsoid), for which the losses are less important. More generalized counterexamples outside an ellipsoid in $\R^d$ will be provided in a forthcoming  work (more generally, we will show that even generic strictly convex obstacles in higher dimensions provide losses in dispersion).
\end{rmq}
The Poisson-Arago spot is an optical phenomenon where a bright point appears at the center of a circular object's shadow. 
This phenomenon was pivotal in scientifically establishing light's wave nature and has since become a standard demonstration in undergraduate physics courses.
In his 1819 report to the french Academy of Sciences, Arago mentions: {\it "L'un de vos commissaires, M.Poisson, avait déduit des intégrales rapportées par l'auteur, le résultat singulier que le centre de l'ombre d'un écran circulaire opaque devait, lorsque les rayons y pénétraient sous des incidences peu obliques, être aussi éclairé que si l'écran n'existait pas. Cette conséquence a été soumise à l'épreuve d'une expérience directe, et l'observation a parfaitement confirmé le calcul"}.
%
Arago later noted that Delisle had previously observed similar concentric bright and dark rings in a ball's shadow, see \cite{Delisle1715} and also \cite{Maraldi1723}.
%

Wave scattering depends heavily on wavelength, with diffraction causing shadow edges to break into fringes. Maraldi observed in \cite{Maraldi1723} that light circulates more easily around smaller objects: {\it "la lumiere plus grande au milieu des boules plus petites, fait voir qu'elle circule en plus grande abondance et plus facilement autour des petites boules qu'autour des grandes"} and \cite[Fig.8]{Maraldi1723} shows light at the center of a ball's shadow.

The study of wave scattering has a rich history, starting with Fresnel's work on diffraction and involving numerous researchers who contributed to understanding this phenomenon.
H\"ormander \cite{horFIO} made geometrical optics a branch of mathematics, providing powerful tools and clarifying relevant concepts: "a wealth of others ideas" from geometrical optics (referred to in the introduction to \cite{horFIO}) was later on exploited by Melrose, Taylor, Andersson, Eskin, Sj\"ostrand and Ivrii in dealing with propagation of singularities for mixed problems.
Of particular importance in recent works is the Melrose and Taylor parametrix for the diffractive Dirichlet problem which gives the form of the solution to \eqref{WE} near diffractive points (see also Zworski \cite{zw90}).

The complexity of diffracted waves, particularly near shadow boundaries, was highlighted by researchers like Keller, who developed a geometrical theory of diffraction. This theory explains how rays "creep" along obstacle boundaries following the shortest possible path. After Keller's work \cite{keller}, it had been conjectured that the decreasing rate of the intensity of light in the shadow region had to be $\exp(-C\tau^{1/3})$, where $\tau$ is the frequency and where the constant $C$ depends on the geometry of the geodesic flow on the boundary of the obstacle. Assuming an analytic boundary, this was proved by Lebeau \cite{le84} and then by Hargé and Lebeau \cite{hale94} for  $C^{\infty}$ boundary.

As we recalled earlier, while dispersion had remained unknown, global in time Strichartz estimates have been known to hold like in the flat case (in every dimension).  Indeed, on a manifold for which the boundary is everywhere strictly geodesically concave (no multiply-reflected rays, no gliding rays - such as the complement in $\mathbb{R}^d$ of a strictly convex obstacle), the Melrose and Taylor parametrix was used in \cite{smso95} to prove that Strichartz estimates for the wave equation do hold as in $\mathbb{R}^d$ (except for the endpoints). Later on, Ivanovici obtained in \cite{doi08} similar results for the classical Schr\"odinger flow with Dirichlet boundary condition, for which an additional difficulty, related to the infinite speed of propagation of the flow, had to be overcome:  first, sharp scale-invariant Strichartz for the semi-classical Schr\"odinger equation (e.g. on a time interval of size the wavelength for the classical equation) are proved on compact manifolds with strictly concave boundaries of dimension $d\geq 2$ (an example of which is provided by the so-called Sina\"i billiard, e.g. a punctured torus), then combined with local smoothing estimates as in \cite{bgt04}. Therefore, to obtain Strichartz in semi-classical time in \cite{doi08} we bypassed dispersion: we side-stepped this issue by taking advantage, as in \cite{smso95}, of the $L^2$ continuity of certain operators to reduce consideration to operators like those on a manifold without boundary: we stress out that this approach is very unlikely to work when one is interested in obtaining dispersion. 
\\ 

The paper is organized as follows: in section \ref{secgeneral} we construct a parametrix for the wave equation in the high-frequency case and when the source point is not too close to the boundary, to be used in the proofs of Theorems \ref{thmCE} and \ref{thmdisp3D}. When the observation point is located near a boundary point through which passes a glancing ray, the Melrose and Taylor parametrix applies directly; when the observation point is far, 
we use Kirchhoff's integral representation formula from section \ref{secgeneral}. In the third section we prove Theorem \ref{thmCE} in the case of the wave equation, by performing explicit computations at the Poisson-Arago spot using our parametrix - that involves Kirchhoff's integral representation formula - obtained in section \ref{secgeneral}, first when the obstacle is a ball ($d_2=0$) and then in the case of a cylindrical obstacle in $\R^{d_1+d_2}$.
The fourth section is devoted to the proof of Theorem \ref{thmdisp3D} for the wave equation. This proof is split into several  parts involving very different arguments : sections \ref{secffb} and \ref{secofoc} deal with the high frequency regime, when the source point is not too close to the boundary, and is based entirely on arguments of section \ref{secgeneral}. When both the source and the observation points are close to the boundary, a parametrix is obtained in section \ref{secctb} using spherical harmonics. In section \ref{secHelm} we deal with the low frequency regime, using classical results on the exterior Dirichlet problem for the Helmholtz equation. 
In the fifth section we explain how one can derive sharp dispersion bounds for the Schrödinger flow using the Kanaï transform and taking advantage of the previous results for waves. The Appendix contains some useful properties of the Airy, Bessel and Hankel functions. 

\subsection*{Acknowledgments} The author would like to thank Gilles Lebeau for helpful and constructive discussions on this problem between 2015 and 2018, when a first sketch of the proof of Theorem \ref{thmCE} (involving the exterior of a ball) has been announced and published as a collaboration in the CRAS note \cite{ildispext}, work on which this paper is based. She would also like to thank Fabrice Planchon for discussions on the generalization of the Poisson counterexample of \cite{ildispext} to a larger class of obstacles, including the cylindrical ones as in Theorem \ref{thmCE}.

\section{Construction of a global parametrix for the wave equation in $\Omega_{d_1,d_2}$ in the high-frequency case, when dist$(Q_0,\del \Omega_{d_1,d_2})$, dist$(Q_0,\del \Omega_{d_1,d_2})>c>0$}\label{secgeneral}

We start with the general form of a parametrix for the wave flow inside $\Omega_{d_1,d_2}=(\mathbb{R}^{d_1}\setminus B_{d_1}(0,1))\times \mathbb{R}^{d_2}$ for any $d_1\geq 3$, $d_2\geq 0$ : this construction will be particularly useful in Section \ref{secdispext3D}, in order to prove Theorem \ref{thmdisp3D} (for $d=d_1=3$) when both the source and the observation points stay away from a fixed, small neighborhood of $\partial\Omega_{3,0}$, as well as in Section \ref{secCE}, in order to construct counterexamples at the Poisson spot when $d_1\geq 4$, for both the exterior of a ball $d_2=0$ and the exterior of a cylinder $d_2\geq 1$. 

Consider the equation \eqref{WE} with initial data $(\delta_{Q_0},0)$, where $Q_0$ is an arbitrary point in $\Omega_{d_1,d_2}$
\begin{equation} \label{WEOC} 
\left\{ \begin{array}{l}
   (\partial^2_t-
 \Delta_{\Omega_{d_1,d_2}}) u=0  \;\; \text{ in } \Omega_{d_1,d_2}\times \mathbb{R}_t, \\ 
 u|_{t=0}=\delta_{Q_0}, \; \partial_t u|_{t=0}= 0, \quad u|_{\partial\Omega_{d_1,d_2}}=0. 
 \end{array} \right.
 \end{equation}
Here $\Delta_{\Omega_{d_1,d_2}}$ is the Laplace operator $\Delta$ from \eqref{laplaballrS2} with Dirichlet boundary condition on $\partial\Omega_{d_1,d_2}$.
By finite speed of propagation, for any time smaller than the distance (in $\mathbb{R}^d$) from $Q_0$ to the boundary of $\Omega_{d_1,d_2}$ (i.e. such as $0<t< d(Q_0,\partial\Omega_{d_1,d_2})$), the solution to \eqref{WEOC} in $\Omega_{d_1,d_2}$ equals the free wave in $\mathbb{R}^d$, denoted $u_{free}(Q,Q_0,t)$, given by
\begin{equation}\label{ufree}
u_{free}(Q,Q_0,t):=\frac{1}{(2\pi)^d}\int e^{i(Q-Q_0)\xi}\cos(t|\xi|)d\xi.
\end{equation}
Let 
$w_{in}(Q,Q_0,\tau):=\widehat{1_{t>0}u_{free}}(Q,Q_0,\tau)$ denote the Fourier transform in time of $u_{free}(Q,Q_0,t)|_{t>0}$, then
\begin{equation}\label{vfreeball}
w_{in}(Q,Q_0,\tau)=\tau^{\frac{d-1}{2}}\frac{e^{-i\tau|Q-Q_0|}}{|Q-Q_0|^{\frac{d-1}{2}}}\Sigma_d(\tau |Q-Q_0|),
\end{equation}
where $\Sigma_3=\frac{i}{4\pi} $ and for $d\geq 4$ and $V:=\tau|Q-Q_0|\gg 1$, 
\begin{equation}\label{vfreeform}
\Sigma_d(V)\sim_{V^{-1}} \sum_{j\geq 0}\Sigma_{j,d} V^{-j}, \quad \Sigma_{0,d}\neq 0.
\end{equation}
In \eqref{vfreeform} and throughout the paper, we use the following classical notion of asymptotic expansion: a function $f(w)$ admits an asymptotic expansion for $v\rightarrow 0$ when there exists a (unique) sequence $(c_{n})_{n}$ such that, for any $n$, $\lim_{v\rightarrow 0} v^{-(n+1)}(f(v)-\sum_{0}^{n} c_{j} v^{j})=c_{n+1}$. We denote $f(v)\sim_{v} \sum_{n} c_{n} v^{n}$.

We first chose our source point $Q_0$ in $ \Omega_{d_1,d_2}$.
Denote by $N$ and $S$ the North and the South poles of $B_{d_1}(0,1)$, respectively : a point $Q_N(r)$ belongs to $ON$ if, in the coordinates $(r,\varphi,\omega,z)$ of \eqref{omega}, it has $\varphi=0$, $z=0$. A point $Q_S(r)$ belongs to $OS$ if $\varphi=\pi$, $z=0$; in both cases $r$ denotes the distance from $Q_{N,S}(r)$ to $B_{d_1}(0,1)$. We let $s>1+c$ for some small constant $c>0$ and let $Q_{0}=:Q_N(s)\in ON$ be our source point throughout the paper (when we will search for counterexamples in Theorem \ref{thmCE}, we will consider the observation point to be $Q_S(s)\in OS$, also with vanishing $z$ coordinate when $d_2\geq 1$). 

We introduce the distance between two points in $\Omega_{d_1,d_2}$ in the $(r,\varphi,\omega,z)$ coordinates as follows : let $Q$ be an arbitrary point of $\Omega_{d_1,d_2}$ with $z_Q=z=(z_1,...,z_{d_2})\in \mathbb{R}^{d_2}$, then
\[
Q:=(r\sin\varphi \times \Pi_{j=1}^{d_1-2} \sin\omega_j,... ,r\sin\varphi \times( \Pi_{j=1}^{d_1-k}  \sin\omega_j)\times \cos \omega_{d_1-k},... , r\sin\varphi \times \cos \omega_1, r\cos \varphi, z).
\]
and the (Euclidean) distance between $Q$ and $Q_0=Q_N(s)$ reads as follows
\begin{equation}\label{tildePhi}
\tilde\phi(r,\varphi,z,s):=|Q-Q_0|=\sqrt{r^2-2sr\cos \varphi+s^2+|z|^2}.
\end{equation}
We change our coordinates  $(r,\varphi)$ into $(x,y)$ defined by $x=r-1$, $y=\frac{\pi}{2}-\varphi$, in order to make the boundary flat: as such, the boundary $\{r=1\}$ becomes $\{x=0\}$ and the domain $(\Omega_{d_1,d_2},\Delta)$ reads as 
\begin{equation}\label{omega2}
\Omega_{d_1,d_2}=\{x\geq 0, y\in [-\pi/2,\pi/2], \omega_j\in [0,\pi], \forall 1\leq j\leq d_1-3, \omega_{d_1-2}\in[0,2\pi),z\in \mathbb{R}^{d_2}\}
\end{equation}
\begin{equation}\label{laplacyl}
\Delta=\frac{\partial^2}{\partial x^2}+\frac{(d_1-1)}{(1+x)}\frac{\partial}{\partial x}+\frac{1}{(1+x)^2}\Big(\frac{\partial^2}{\partial y^2}+(d_1-2)\tan y\frac{\partial}{\partial y}+\frac{1}{(\cos y)^2}\Delta_{\mathbb{S}^{d_1-2}}\Big) +\Delta_{\mathbb{R}^{d_2}}.
\end{equation}
In the coordinates $(x,y)$, this distance \eqref{tildePhi} satisfies $\phi(x,y,z,s):=\tilde\phi(1+x,\frac{\pi}{2}-y,z,s)$, where we have 
\begin{equation}\label{Phi}
\phi(x,y,z,s)=\sqrt{(1+x)^2-2s(1+x)\sin y+s^2+|z|^2}.
\end{equation}
We use both types of coordinates : to describe the geometry it is more convenient to work with $(r, \varphi)$, but to provide an explicit form of a parametrix to the wave equation it is more useful to work with $(x,y)$.\\

For $Q\in \Omega_{d_1,d_2}$, we write $w_{in}$ from \eqref{vfreeball} using its Fourier transform, modulo $O(\tau^{-\infty})$, as follows
\begin{equation}\label{winform}
w_{in}(Q,Q_0,\tau)=\Big(\frac{\tau}{2\pi}\Big)^{1+d_2}\tau^{\frac{d-1}{2}}\int \frac{\Sigma_d(\tau\phi(x,\tilde y,\tilde z,s))}{\phi(x,\tilde y,\tilde z,s)^{\frac{d-1}{2}}}e^{i\tau((y-\tilde y)\alpha+(z-\tilde z)\gamma)}e^{-i\tau\phi(x,\tilde y,\tilde z,s)} d\alpha d\gamma d\tilde y d\tilde z.
\end{equation}
The forward free wave is given by
\begin{equation}\label{ufree}
u^+_{free}:=\frac{1}{2\pi}\int e^{i\tau t}w_{in}(Q,Q_0,\tau) d\tau.
\end{equation}
We denote by $u$ the solution to the Dirichlet wave equation in $\Omega_{d_1,d_2}$ whose incoming part (before reflection) equals $u^+_{free}$ from \eqref{ufree}. To construct it explicitly, we define its extension to $\mathbb{R}^d$ as follows
\begin{equation}\label{defUbar}
\underline{u}(Q,Q_0,t):=
\left\{ \begin{array}{l}
u(Q,Q_0,t), \text{ if } Q\in\Omega_{d_1,d_2},\\
0, \text{ if } Q\in \overline{B}_{d_1}(0,1)\times \R^{d_2}.
 \end{array} \right.
\end{equation}
By construction, $\underline{u}(Q,Q_0,t)$ vanishes inside $B_{d_1}(0,1)\times \R^{d_2}$.
Using the classical Duhamel formula and setting $u^+:=1_{t>0}u$, $\underline{u}$ reads as follows
 \begin{equation}\label{Gbarform}
 \underline{u}|_{t>0} =u^+_{free}-u^{\#},\quad \text{ where we set } u^{\#}(Q,Q_0,t): 
 =\square^{-1}\Big((\partial_xu^+)|_{\partial\Omega_{d_1,d_2}}\Big),
 \end{equation}
 where $\del_x u$ is the normal derivative of $u$ and where, for $F$ such that supp$(F)\subset \{t'\geq 0\}$ and with $\mathcal{F}^{-1}_{\xi}$ denoting the inverse Fourier transform in space, we have
\begin{equation}\label{FRform}
\square^{-1}F(t)=\int_{-\infty}^t \mathcal{F}^{-1}_{\xi}\Big(\frac{\sin (t|\xi|)}{|\xi|}\Big)(t-t')*F(t')dt'. 
\end{equation}

Fix $0\leq h_0<1$ small and let $h\in (0,h_0)$. Let $\chi\in C^{\infty}_0((\frac 12, 2))$ be a smooth cutoff equal to $1$ on $[\frac 34, \frac 32]$ and such that $0\leq \chi\leq 1$.
As we are interested in obtaining and evaluating $\chi(hD_t)\underline{u}(Q,Q_0,t)$, let
\begin{equation}\label{udiezh}
u^+_{free,h}:=\chi(hD_t)u^+_{free}, \quad u^{\#}_{h}:=\chi(hD_t)u^{\#}(Q,Q_0,t).
\end{equation}
As the free wave flow $u^+_{free,h}$ satisfies the usual dispersive estimates, we are reduced to study $u^{\#}_{h}(Q,Q_0,t)$ of \eqref{udiezh}, with $Q_0=Q_N(s)$. Using \eqref{Gbarform}, \eqref{FRform} and classical results, $u^{\#}_{h}(Q,Q_0,t)$ reads as 
\begin{prop}\label{propudiezh}
If $Q\in\Omega_{d_1,d_2}$ is such that $\tau\text{dist}(Q,\Omega_{d_1,d_2})\gg 1$, we have
\begin{multline}\label{vhform}
u^{\#}_{h}(Q,Q_0,t)=\chi(hD_t)u^{\#}(Q,Q_0,t)=\int e^{it\tau}\chi(h\tau)
 \int_{P\in\partial\Omega_{d_1,d_2}}\mathcal{F}(\partial_x u^+|_{\partial\Omega_{d_1,d_2}})(P,Q_0,\tau)\\
 \times \Big(\frac{\tau^{\frac{d-3}{2}}}{|Q-P|^{\frac{d-1}{2}}}\Sigma_d(\tau|Q-P|)e^{-i\tau|Q-P|}
+O((\tau |Q-P|)^{-\infty})\Big)d\sigma(P) d\tau,
\end{multline}
where $\Sigma_3:=\frac{i}{4\pi}$ and for $d=d_1+d_2\geq 4$, $\Sigma_d$ is given in \eqref{vfreeform} and where $\mathcal{F}(\partial_x u^+|_{\partial\Omega_{d_1,d_2}})(P,Q_0,\tau)$ denotes the Fourier transform in time of $\partial_x u^+|_{\partial\Omega_{d_1,d_2}}(P,Q_0,t)$ for $P\in\partial\Omega_{d_1,d_2}$.
\end{prop}
\begin{rmq}
Recall that when $d_2\geq 1$, $\partial\Omega_{d_1,d_2}=\mathbb{S}^{d_1-1}\times \mathbb{R}^{d_2}$, while when $d_2=0$, $\partial\Omega_{d_1,d_2}=\mathbb{S}^{d-1}$. Also, when $d\geq 4$, $\Sigma_d$ is given in \eqref{vfreeform} and as $|Q-P|\geq r-1>c$, it follows that $O((\tau |Q-P|)^{-\infty})=O(h^{\infty})$.
\end{rmq}

It follows from Proposition \ref{propudiezh} that, in order to evaluate the $L^{\infty}$ norm of $u^{\#}_{h}$, we are reduced to compute $\mathcal{F}(\partial_{x}u^+|_{\mathbb{S}^{d_1-1}\times \mathbb{R}^{d_2}})$. In order to do that we have to obtain $u^{+}(Q,Q_0,t)$ for $Q$ near $\mathbb{S}^{d_1-1}\times \mathbb{R}^{d_2}$. As rays sent from $Q_0$ in the direction of the obstacle may be either transverse or tangent to its boundary, to construct the reflected wave we separate two regimes (transverse and glancing), where the behaviour of the wave will be different. To do that, we first define the apparent contour from $Q_0$.
\begin{dfn}
For a source point $Q_0=Q_N(s)$, which has $(r,\varphi, \omega,z)$-coordinates $(s,0,\omega,0_{\mathbb{R}^{d_2}})$, we define its apparent contour $\mathcal{C}^{d_1,d_2}_{Q_0}$ as  the set of points $P\in\mathbb{S}^{d_1-1}\times \mathbb{R}^{d_2}$ such that the ray $Q_0P$ is tangent to $\mathbb{S}^{d_1-1}\times \mathbb{R}^{d_2}$: in other words, with $\tilde\phi$ defined in \eqref{Phi}, we have
\[
\mathcal{C}^{d_1,d_2}_{Q_0}:=\{P\in\mathbb{S}^{d_1-1}\times \mathbb{R}^{d_2} \text{ with coordinates } (1,\theta,z) \text{ such that } \partial_r \tilde\phi(1,\varphi,z,s)=0\}.
\]
As $\partial_r\tilde\phi=(r-s\cos\varphi)/\tilde\phi$ and $\del_r\tilde \phi|_{r=1}=0$ cancels when $\cos\varphi=\frac 1s$, we find 
\[
\mathcal{C}_{Q_0}^{d_1,d_2\geq 1}:=\{ P=(1,\arccos (1/s),\omega,z), z\in \mathbb{R}^{d_2}\}, \quad \mathcal{C}_{Q_0}^{d_1,0}:=\{ P=(1,\arccos (1/s), \omega)\}.
\]
In the coordinates $(x,y,\omega, z)$ we therefore have $\mathcal{C}^{d_1,d_2}_{Q_0}=\{P=(0,\arcsin(1/s),\omega,z)\}$.
We define
\[
\varphi_*:=\arccos(1/s)=\frac{\pi}{2}-\arcsin(1/s), \quad y_*(s):=\arcsin(1/s).
\]
\end{dfn}

We need to bound $u^{\#}_{h}(Q,Q_0,t)$ defined by \eqref{vhform}. For a point $P$ on the boundary with coordinates $(0,y_P,\omega_P,z_P)$, the integrand in \eqref{vhform} behaves differently if $y_P$ is outside a small neighborhood of $y_*(s)$ or if it is near $y_*(s)$. We separate the integration contour in  two parts, according to these two cases.

\begin{dfn}\label{dfnkappa}
Fix $0<\e\leq 1$ and let $\kappa_{\e}\in C^{\infty}_0((-\e,\e))$ be a smooth function equal to $1$ on $[-\e/2, \e/2]$ and such that $0\leq\kappa_{\e}\leq 1$ everywhere. This cut-off will be often used throughout the paper with different values of $\epsilon$ (that will be, each time, a small positive number independent of the other parameters).
\end{dfn}
Let $\kappa_{\e_0}$ like in Definition \ref{dfnkappa} for some fixed, sufficiently small constant $0<\e_0<1/2$ independent of $h$, and split the integral over $P$ in \eqref{vhform} in two parts using $1=\kappa_{\e_0}(y-y_*(s))+(1-\kappa_{\e_0}(y-y_*(s)))$. 
For a point $P$ on the boundary with coordinates $(0,y_P,\omega_P,z_P)$ and for $\kappa\in \{\kappa_{\e_0},1-\kappa_{\e_0}\}$, we let 
\begin{multline}\label{vhformnewchi}
u^{\#}_{h,\kappa}(Q,Q_0,t):=\int e^{it\tau}\chi(h\tau)
 \int_{P\in\mathbb{S}^{d_1-1}\times \mathbb{R}^{d_2}}\mathcal{F}(\partial_x u^+|_{\mathbb{S}^{d_1-1}\times\mathbb{R}^{d_2}})(P,Q_0,\tau)\kappa(y_P-y_*(s))\\
 \times \Big(\frac{\tau^{\frac{d-3}{2}}}{|Q-P|^{\frac{d-1}{2}}}\Sigma_d(\tau|Q-P|)e^{-i\tau|Q-P|}
+O((\tau |Q-P|)^{-\infty})\Big)d\sigma(P) d\tau,
\end{multline}
then $u^{\#}_{h,\kappa_{\e_0}}+u^{\#}_{h,1-\kappa_{\e_0}}=u^{\#}_{h}$. In the next section we deal with $u^{\#}_{h,1-\kappa_{\e_0}}$, the "transverse" part.

\subsection{The transverse part of $u^+$}
We show that $u^{\#}_{h,1-\kappa_{\e_0}}$ satisfies the usual dispersive bounds.
\begin{lemma}\label{lemtrans}
Let $Q\in \Omega_{d_1,d_2}$ 
then there exists $C=C_{\e_0}$ such that
\begin{equation}\label{sumtransv}
|u^{\#}_{h,1-\kappa_{\e_0}}(Q,Q_0,t)|
\lesssim \frac{C_{\e_0}}{h^d}\Big(\frac{h}{t}\Big)^{(d-1)/2}.
\end{equation}
\end{lemma}
\begin{proof}
As in the integral defining $u^{\#}_{h,1-\kappa_{\e_0}}(Q,Q_0,t)$ the boundary variable $P$ stays away from a small, fixed neighborhood of $\mathcal{C}^{d_1,d_2}_{Q_0}$ (that is, from the set $\{y=y_*(s)=\arcsin (1/s)\}$), it means that the ray $Q_0P$ is transverse to the boundary. From classical results, for such $P$, the phase function of $\mathcal{F}(\partial_x u^+|_{\mathbb{S}^{d_1-1}\times \mathbb{R}^{d_2}})(P,Q_0,\tau)$ equals $-i\tau |P-Q_0|$ and therefore the phase function of \eqref{sumtransv} equals $i\tau(t-|P-Q|-|P-Q_0|)$. The critical points of $P\rightarrow |P-Q|+|P-Q_0|$ satisfy, for some $\lambda\in \mathbb{R}\setminus \{0\}$,
$ \Big(\frac{P-Q}{\vert P-Q\vert}+\frac{P-Q_0}{\vert P-Q_0\vert}\Big)=\lambda \vec{\nu}_P$
with $\vec{\nu}_P$ the unit normal to $\partial\Omega_{d_1,d_2}$ pointing towards $\Omega_{d_1,d_2}$. 
Let $\Omega^+_{Q_0}\subset \Omega_{d_1,d_2}$ be the open set of points $\tilde Q\in\Omega_{d_1,d_2}$ such that the segment $[Q_0,\tilde Q]$ is contained in $\Omega_{d_1,d_2}$. 
For an observation point $Q\notin \Omega^+_{Q_0}$, the restriction of this phase to $\partial\Omega^+_{Q_0}:=\Omega^+_{Q_0}\cap \partial\Omega_{d_1,d_2}$ has a unique critical point at $P(Q,Q_0)\in \partial\Omega^+_{Q_0}$ which is the intersection of the segment $[Q_0,Q]$ with $\partial\Omega^+_{Q_0}$.  As both $Q_0$ and $Q$ have $z=0\in \mathbb{R}^{d_2}$ coordinate, the same will hold for $P(Q,Q_0)$.
 As $P(Q,Q_0)$ stays away from a small, fixed neighborhood of $\mathcal{C}_{Q_0}$ (due to the support of $1-\kappa_\e$), one has a lower bound on $\nabla^2_P(|Q-P|+|P-Q_0|)$ which is moreover uniform with respect to $Q,Q_0$. 
The stationary phase applies to $u^{\#}_{h,1-\kappa_{\e_0}}(Q,Q_0,t)$ and yields usual dispersive bounds of $\mathbb{R}^d$.
\end{proof}

We are left with $u^{\#}_{h,\kappa_{\e_0}}$, the most delicate part. For $P$ near $\mathcal{C}^{d_1,d_2}_{Q_0}$, i.e. on the support of $\kappa_{\e_0}(y_P-y_*(s))$, we must construct $\mathcal{F}(\partial_x u^+|_{\mathbb{S}^{d_1-1}\times\mathbb{R}^{d_2}})(P,Q_0,\tau)$. This is the goal of the remaining of this section.

\subsection{The glancing part of $u^{+}$} \label{sectgl} This part follows closely the construction of \cite{IaIv23} (where $d_1=2$, $d_2=1$).
We obtain $u^+(\cdot,Q_0,t)$ near the boundary using the Melrose and Taylor parametrix and then we compute the trace on the boundary of its normal derivative $\partial_{x}u^+$ in order to obtain the "glancing part" of $u^{\#}$ from formula \eqref{Gbarform}. The next result applies near rays which are tangent to $\partial\Omega_{d_1,d_2}$.
\begin{prop}\label{MT}\cite{zw90}
Microlocally near a glancing point of second order contact with the boundary there exist smooth phase functions $\iota(x,y,z,\alpha,\gamma)$ and $\zeta(x,y,z,\alpha,\gamma)$ such that $\iota\pm\frac 23 (-\zeta)^{3/2}$ satisfy the eikonal equation and there exist symbols $a,\,b$ satisfying appropriate transport equations such that, for any parameters $\alpha,\gamma$ in a conic neighborhood of a glancing direction and for $\tau\gg 1$ large enough,
\begin{equation}\label{Gtau}
G_{\tau}(x,y,z,\alpha,\gamma):=e^{\ii\tau\iota(x,y,z,\alpha,\gamma)}\Big(aA_+(\tau^{2/3}\zeta)+b\tau^{-1/3}A'_+(\tau^{2/3}\zeta)\Big)A^{-1}_+(\tau^{2/3}\zeta_0)
\end{equation} 
satisfies 
$(\tau^2+\Delta)G_{\tau}=e^{\ii\tau\iota(x,y,z,\alpha,\gamma)}\Big(a_{\infty}A_+(\tau^{2/3}\zeta)+b_{\infty}\tau^{-1/3}A'_+(\tau^{2/3}\zeta)\Big)A^{-1}_+(\tau^{2/3}\zeta_0)$,
where the symbols verify $a_{\infty}$, $b_{\infty}\in O(\tau^{-\infty})$ and where we set $\zeta_0=\zeta|_{x=0}$. Moreover, the following properties hold:
\begin{itemize}
\item $\iota$ and $\zeta$ are homogeneous of degree $0$ and $-1/3$ and satisfy $\langle d\iota,d\iota\rangle-\zeta\langle d\zeta,d\zeta\rangle=1$,
$\langle d\iota,d\zeta\rangle=0$,
where $\langle\cdot,\cdot\rangle$ is the polarization of $p$; the phase $\zeta_0$ is independent of $y,z$ so that $\zeta_0(\alpha,\gamma)$ vanishes at a glancing direction; the diffractive condition means that $\partial_x\zeta|_{x=0}<0$ near a glancing point;
\item the symbols $a(x,y,z,\alpha,\gamma)$ and $b(x,y,z,\alpha,\gamma)$ belong to the class $\mathcal{S}^0_{(1,0)}$  and satisfy the appropriate transport equations. Moreover $a|_{x=0}$ is elliptic at the glancing point with essential support included in a small, conic neighborhood of it, while $b|_{x=0}=0$.
\end{itemize}
\end{prop}
In the following we will explicitly compute the phase functions and the main contributions of the symbols. Notice that, as we consider here the exterior of a ball or of a cylinder (i.e. of a model convex obstacle), we can obtain the explicit form of the (glancing part of the) outgoing wave directly, using the eikonal and the transport equations below (so without using the Melrose and Taylor parametrix). However, the fact that the symbol $b$ vanishes on the boundary it is not obvious and it is particularly useful. As Proposition \ref{MT} ensures this property, it is very convenient to make use of it.\\

\paragraph{\bf The eikonal equation} The functions $\iota$ and $\zeta$ from Proposition \ref{MT} solve the system of equations
\begin{equation}\label{MT-para}
\left\{\begin{aligned}
&(\partial_x\iota)^2+\frac{(\partial_y\iota)^2}{(1+x)^2}+|\nabla_z\iota|^2-\zeta\Big((\partial_x\zeta)^2+\frac{(\partial_y\zeta)^2}{(1+x)^2}+|\nabla_z\zeta|^2\Big)=1,\\
&\partial_x\iota \partial_x\zeta+\frac{\partial_y\iota \partial_y\zeta}{(1+x)^2}+<\nabla_z\iota, \nabla_z\zeta>=0.
\end{aligned}\right.
\end{equation}
The system \eqref{MT-para} admits the pair of solutions 
\begin{equation}\label{soleikball}
\iota(y,z,\alpha,\gamma)=y\alpha+<z,\gamma>,   \quad \zeta(x,\alpha,\gamma)=\alpha^{2/3}\tilde\zeta((1+x)\sqrt{1-|\gamma|^2}/\alpha),
\end{equation}
where one should think of $\alpha, \gamma$ as the dual variables of $y,z$ and
where for $\rho:=(1+x)\frac{\sqrt{1-|\gamma|^2}}{\alpha}$, $\tilde \zeta$ is the (unique) solution to
$\frac{1}{\rho^2}-\tilde{\zeta}(\rho)[\tilde{\zeta}'(\rho)]^2=1$, $\tilde\zeta(1)=0$.

\begin{lemma}\label{lemzeta}
The equation $-\tilde{\zeta}(\partial_\rho\tilde{\zeta})^2+1/\rho^2=1$, $\tilde{\zeta}(1)=0$ has a unique solution of the form
\begin{equation}\label{tildezet>}
\frac23 (-\tilde{\zeta}(\rho))^{3/2}=\int_{1}^{\rho}\frac{\sqrt{w^2-1}}{w}dw=\sqrt{\rho^2-1}-\arccos\left(\frac{1}{\rho}\right),
\end{equation}
if $\rho>1$, while for $\rho<1$ we have 
\begin{equation}\label{tildezet<}
\frac23\tilde{\zeta}(\rho)^{3/2}=\int_{\rho}^1\frac{\sqrt{1-w^2}}{w}dw=\log [(1+\sqrt{1-\rho^2})/\rho]-\sqrt{1-\rho^2}.
\end{equation}
We note that at $\rho=1$ we have $\tilde{\zeta}=0$ and $\lim_{\rho\rightarrow1} \frac{(-\tilde \zeta)(\rho)}{\rho-1}=2^{1/3}$.
\end{lemma} 
\begin{rmq}
When $d_2=0$, there is no $z$ variable, and in this case we have simpler formulas
\begin{equation}\label{soleikballd}
\iota(y,z,\alpha,\gamma)=y\alpha,   \quad \zeta(x,\alpha)=\alpha^{2/3}\tilde\zeta((1+x)/\alpha),
\end{equation}
where $\rho:=(1+x)\frac{1}{\alpha}$ and $\tilde \zeta$ is like in Lemma \ref{lemzeta}.
\end{rmq}

\paragraph{\bf The transport equations } Let $\Delta$ as in \eqref{laplacyl}. We look for symbols $a,b$ such that $(\tau^2+\Delta)G_{\tau}\in O_{C^{\infty}}(\tau^{-\infty})$, with $G_{\tau} $ defined in \eqref{Gtau} and with $\iota$ and $\zeta$ given in Proposition \ref{MT}. We introduce the following notation
\[
v(a,b;\tau):=e^{i\tau \iota}\Big(aA_+(\tau^{2/3}\zeta)+b\tau^{-1/3}A'_+(\tau^{2/3}\zeta)\Big).
\]
\begin{lemma}\label{lemeqtransp}
We have $
(\tau^2+\Delta)v(a,b;\tau)=v(A,B;\tau)$
,  with $
\left(
\begin{array}{c}
A   \\
B   \\
\end{array}
\right)=(\tau\mathcal{M}+\Delta)
\left(
\begin{array}{c}
a   \\
b   \\
\end{array}
\right)
$
\hskip 3mm  and
\[
\mathcal{M}=
\left(
\begin{array}{cc}
 \frac{i\alpha}{(1+x)^2}(2\partial_y+(d_1-2)\tan y) +2i<\gamma, \nabla_z>&  -\Big(\zeta \partial_x\zeta(2\partial_x+\frac{d_1-1}{1+x})+(\partial_x\zeta)^2+\zeta \partial^2_x\zeta\Big)    \\
2\partial_x\zeta\partial_x+\partial^2_x\zeta+\frac{d_1-1}{1+x}\partial_x\zeta  &   \frac{i\alpha}{(1+x)^2}(2\partial_y+(d_1-2)\tan y) +2i<\gamma, \nabla_z> \\  
\end{array}
\right)
\]
\end{lemma}
The lemma follows by explicit computations. When $d_2=0$, take $\gamma=0$ everywhere in $\mathcal{M}$.\\

We set $a\sim_{\tau^{-1}} \sum_{k\geq 0} a_{k}(i\tau)^{-k}$, $b\sim_{\tau^{-1}} \sum_{k\geq 0} b_{k}(i\tau)^{-k}$. Using Lemma \ref{lemeqtransp} we obtain that $a_k,b_k$ satisfy the following transport equations  
\begin{equation}\label{treq}
\mathcal{M}\left(
\begin{array}{c}
a_{k+1}   \\
b_{k+1}   \\
\end{array}
\right)+\Delta \left(
\begin{array}{c}
a_{k}   \\
b_{k}   \\
\end{array}
\right) =0, \quad \mathcal{M} \left(
\begin{array}{c}
a_{0}   \\
b_{0}   \\
\end{array}
\right)=0.
\end{equation}
From Proposition \ref{MT} we know that we may find $(a,b)$ such that, $a|_{x=0}$ is elliptic and $b|_{x=0}=0$ (the fact that $b_0,b_1$ can be taken to vanish on the boundary follows easily from direct computations, but this is less obvious for all $b_k$). The main contribution $a_0$ of $a$ is obtained below.

\begin{lemma}\label{proptransp}
The second equation in \eqref{treq}, $ \mathcal{M} \left(
\begin{array}{c}
a_{0}   \\
b_{0}   \\
\end{array}
\right)=0$, has a solution of the form 
\[
a_{0}=(\cos y)^{(d_1-2)/2} \xi((1+x)/\alpha), \quad b_{0}=0,
\]
where, for $\tilde\zeta$ obtained in Lemma \ref{lemzeta}, the function $\xi $ satisfies 
$2\partial_{\rho}\tilde \zeta\partial_{\rho}\xi+\partial^2_{\rho}\tilde\zeta+\frac{d_1-1}{\rho} (\partial_{\rho}\tilde\zeta)  \xi=0$, $\xi(1)=1$. In particular, $\xi$ is elliptic for $(1+x)/\alpha$ near $1$.
\end{lemma}
Our goal in this section is to describe the solution $u^+$ to the wave equation with Dirichlet boundary condition, whose incoming part is $u^+_{free}$, microlocally near a bicharacteristic tangent to the boundary. In the next lemma, which follows from Proposition \ref{MT}, we introduce an operator which provides the form of the outgoing wave ; as its trace on the boundary is an elliptic FIO, it will allow to recover the outgoing solution with incoming part $u^+_{free}$. Recall first some notations :
\begin{dfn}\label{dfndistrib}
For an open set $U\subset\R^n$, we denote by $\mathcal{D}(U)$ the space of test functions $C^{\infty}_0(U)$ (smooth functions with compact support) and by $\mathcal{D}'(U)$ the space of distributions on $U$ (the topological dual of $\mathcal{D}(U)$). We denote by $\mathcal{E}(U)$ the (Fréchet) topological vector space $C^{\infty}(U)$ of smooth functions with the family of semi-norms $C^{\infty}(U)\ni g \rightarrow \sup_{K\subset U, K \text{compact}} |\partial^{j} g |\in [0,\infty)$ and by $\mathcal{E}'(U):=(\mathcal{E}(U))^*$ its dual, which is the space of distributions with compact support.
\end{dfn}

\begin{lemma}\label{corMtau} 
Define an operator $M_{\tau}: \mathcal{E}'(\mathbb{R}^{1+d_2})\rightarrow  \mathcal{D}'(\mathbb{R}^{2+d_2})$, with $\mathcal{E}'(\mathbb{R}^{1+d_2})$ as in Definition \ref{dfndistrib}
\begin{equation*}
M_{\tau}(f)(x,y,z):=\left(\frac{\tau}{2\pi}\right)^{1+d_2}\int G_{\tau}(x,y,z,\alpha,\gamma)\hat{f}(\tau\alpha,\tau\gamma)d\alpha d\gamma.
\end{equation*}
Then, near the glancing region, we have $(\tau^2+\Delta)M_{\tau}(f)\in O(\tau^{-\infty})
$ (up to the boundary) for all $f\in \mathcal{E}'(\mathbb{R}^{1+d_2})$.
Moreover, the restriction to the boundary $M_{\tau}(f)|_{\partial\Omega_{d_1,d_2}}=:J_{\tau}(f)$ defined by
\begin{equation*}
J_{\tau}(f)(y,z)=\left(\frac{\tau}{2\pi}\right)^{1+d_2}\int e^{\ii\tau(\iota(y,z,\alpha,\gamma)-\tilde{y}\alpha-\tilde{z}\gamma)}a(0,y,z,\alpha,\gamma,\tau)f(\tilde{y},\tilde{z})d\alpha d\gamma d\tilde{y} d\tilde{z},
\end{equation*}
has a microlocal inverse $J^{-1}_{\tau}$ as $a(x,y,z,\alpha,\gamma,\tau)$ is the elliptic symbol from Proposition \ref{MT}.
\end{lemma}
\begin{proof}
The first statement is obvious as $G_{\tau}$, defined in \eqref{Gtau}, satisfies $(\tau^2+\Delta)G_{\tau}\in O(\tau^{-\infty})$. Moreover, as $b$ can be chosen to vanish on the boundary, it follows that 
\[
G_{\tau}|_{x=0}=e^{i\tau\iota(0,y,z,\alpha,\gamma)} a(0,y,z,\alpha,\gamma).
\]
As $a|_{x=0}$ is elliptic, then so is $G_{\tau}|_{x=0}$. Hence, the restriction to the boundary of the operator $M_{\tau}$, denoted $J_{\tau}$, is an elliptic Fourier integral operator. 
\end{proof}

The operator $M_{\tau}\circ J^{-1}_{\tau}$ applied to the restriction to the boundary of the free wave yields the form of the outgoing wave near the glancing region that matches the boundary trace $w_{in}|_{\partial\Omega_{d_1,d_2}}$ of the incoming wave (at least as long as $|z|\ll s$, which means that one considers directions within a cone around the $ON$ axis) :

\begin{prop}\label{struttura}
For all $Q=(x,y,\omega,z)$ near the glancing region with $|z|\ll s$, i.e. with $x$ near $0$ and $y$ on the support of $\kappa_{\e_0}(y-y_*(s))$ for some small $\e_0>0$, $u^+(Q,Q_0,t)$ is independent of $\omega$ and we have 
\begin{equation}\label{uplusjgl}
u^+(Q,Q_0,t)=\frac{1}{(2\pi)^2}\int e^{\ii t\tau}\big(w_{in}(x,y,z,\tau)-M_{\tau}(J_{\tau}^{-1}(w_{in}|_{\partial\Omega_{d_1,d_2}})(x,y,z))\big)d\tau.
\end{equation}

\end{prop}
As mentioned above, we wish to determine the form of $\mathcal{F}(\partial_{x} u^+)(P,Q_0,\tau)$ for $P$ near $\mathcal{C}^{d_1,d_2}_{Q_0}$ which will be inserted in formula \eqref{vhform} in order to obtain dispersive bounds.
To do that, we need to compute $M_{\tau}(J_{\tau}^{-1}(w_{in}|_{\partial\Omega_{d_1,d_2}}))$. For that we write  $w_{in}$, that satisfies $(\tau^2+\Delta)w_{in}\in O(\tau^{-\infty})$ near the glancing region, as an oscillating integral with phase functions $\iota,\zeta$ and symbols depending on $a,b$ from Proposition \ref{MT}. We introduce an operator  $T_{\tau}: \mathcal{E}'(\mathbb{R}^{1+d_2})\rightarrow  \mathcal{D}'(\mathbb{R}^{2+d_2})$ for $F\in \mathcal{E}'(\mathbb{R}^{1+d_2})$
\begin{equation}\label{T-tau}
T_{\tau}(F)(x,y,z)=\left(\frac{\tau}{2\pi}\right)^{1+d_2}\int{e^{\ii\tau(y\alpha+<z,\gamma>)}}\big(aA(\tau^{2/3}\zeta)+ b\tau^{-1/3}A'(\tau^{2/3}\zeta)\big) \hat{F}(\tau\alpha,\tau\gamma)d\alpha d\gamma.
\end{equation}
According to \cite[Lemma A.2]{SmSo94Duke}, $T_{\tau}$ is an elliptic FIO near the glancing regime and $(\tau^2+\Delta)T_{\tau}(F) \in O(\tau^{-\infty})$. Moreover, for every solution to $(\tau^2+\Delta)w\in O_{C^{\infty}}(\tau^{-\infty})$ there exists a unique $F$ such that, microlocally near a glancing point, $w=T_{\tau}F+O(\tau^{-\infty})$. Applying  \cite[Lemma A.2]{SmSo94Duke} to $w_{in}$ gives
\begin{lemma}\label{lemF}
There exists a unique $F_{\tau}\in \mathcal{E}'(\mathbb{R}^{1+d_2})$, essentially supported near $y_*(s)$, such that,  for $Q=(x,y,\omega,z)$ near $\mathcal{C}^{d_1,d_2}_{Q_0}$ (and $|z|\ll s$ if $d_2>0$), $w_{in}(Q,Q_0,\tau)$ may be written as follows modulo $O(\tau^{-\infty})$ terms
\begin{equation}\label{windfn}
w_{in}(Q,Q_0,\tau)=\left(\frac{\tau}{2\pi}\right)^{1+d_2}\int  e^{i\tau(y\alpha+<z,\gamma>)} \Big(aA(\tau^{2/3}\zeta)+b\tau^{-1/3}A'(\tau^{2/3}\zeta)\Big)\hat{F}_{\tau}(\tau\alpha,\tau\gamma) d\alpha d\gamma,
\end{equation}
where $\zeta$ is given in \eqref{soleikball}, and $a,b$ are as in Proposition \ref{MT} with $a$ elliptic with main contribution $a_0$ given in Lemma \ref{proptransp} and with $b|_{x=0}=0$. When $d_2=0$, there are no $z,\gamma$ variables in \eqref{windfn}. 
\end{lemma}
\begin{rmq}\label{rmq|z|}
The condition on the size of $|z|$ in Lemma \ref{lemF} is necessary in order to apply the stationary phase with respect to $\gamma$ and corresponds to $|\gamma|\ll 1$, i.e. to initial directions in a cone around the $ON$ axis. 
Under the assumptions of Theorem \ref{thmCE}, this condition simply holds by finite speed of propagation.
Notice that, for $|\gamma|$ close to $1$, which corresponds to initial directions almost parallel to the $Oz$ axis, a parametrix for the wave equation has been given in \cite{IaIv23} for $d_2=1$ in terms of a spectral sum in cylindrical coordinates.

\end{rmq}

The precise form of $F_{\tau}$ is given in Lemma \ref{lemmaF} below. Let us first state a corollary of Lemma \ref{lemF} which provides the form of $\mathcal{F}(\partial_{x}u^+|_{\mathbb{S}^{d_1-1}\times \mathbb{R}^{d_2}})$ in terms of $\hat{F}_{\tau}$, that will allow to obtain the parametrix from \eqref{vhform}.
\begin{prop}\label{struttura}
Let $F_{\tau}\in \mathcal{E}'(\mathbb{R}^{1+d_2})$ as in Lemma \ref{lemF} satisfying $w_{in}(\cdot,\tau)=T_{\tau}(F_{\tau})+O(\tau^{-\infty})$ for $(x,y)$ near $(0,y_*(s))$. Then for all $Q=(x,y,\omega,z)$ near the glancing region with $|z|\ll s$, we have

\begin{multline}\label{MtauJ-1tau}
M_{\tau}(J_{\tau}^{-1}(w_{{in}}|_{\partial\Omega_{d_1,d_2}}))(x,y,z)=\left(\frac{\tau}{2\pi}\right)^{1+d_2}\int e^{\ii\tau(y\alpha+<z,\gamma>)}
\left(a A_{+}(\tau^{2/3}\zeta)+b\tau^{-1/3}A'_{+}(\tau^{2/3}\zeta)\right) \\
\times \frac{A(\tau^{2/3}\zeta_0)}{A_{+}(\tau^{2/3}\zeta_0)}\hat{F}_{\tau}(\tau\alpha,\tau\gamma)d\alpha d\gamma +O(\tau^{-\infty}).
\end{multline}
\end{prop}
\begin{proof}
The proposition follows immediately from Lemma \ref{lemF} as, replacing $w_{in}|_{\partial\Omega_{d_1,d_2}}$ by $T_{\tau}(F_{\tau})|_{\partial\Omega_{d_1,d_2}}$, the symbol $b$ vanishes and yields $T_{\tau}|_{\partial\Omega_{d_1,d_2}}=J_{\tau}\circ Q_{\tau}$ modulo $O(\tau^{-\infty})$ terms, where we set
\[
\hat{Q}_{\tau}(F):=A(\tau^{2/3}\zeta_0(\alpha,\gamma))\hat{F}_{\tau}(\tau\alpha,\tau\gamma), \text{ with } \zeta_0(\alpha,\gamma)=\zeta(0,\alpha,\gamma).
\] 
Then $J^{-1}_{\tau}\circ T_{\tau}(F_{\tau})|_{\partial\Omega_{d_1,d_2}}=Q_{\tau}(F_{\tau})+O(\tau^{-\infty})$.
Replacing in $M_{\tau}(J_{\tau}^{-1}(w_{in}|_{\partial\Omega_{d_1,d_2}}))$ gives \eqref{MtauJ-1tau}.
\end{proof}
We can now obtain the form of $u^+$ in \eqref{uplusjgl} near $\mathcal{C}^{d_1,d_2}_{Q_0}$ using \eqref{windfn} to express $w_{in}$ and \eqref{MtauJ-1tau} to express $M_{\tau}(J_{\tau}^{-1}(w_{in}|_{\partial\Omega_{d_1,d_2}}))$. Taking (the trace of) the normal derivative of $u^+$ yields $\mathcal{F}(\partial_x u^+)|_{\del\Omega_{d_1,d_2}}$.

\begin{prop}\label{propw0gl}
For $P=(0,y,\omega,z)\in\mathbb{S}^{d_1-1}\times \mathbb{R}^{d_2}$ near $\mathcal{C}^{d_1,d_2}_{Q_0}$ and $|z|\ll s$ when $d_2>0$, the Fourier transform in time of $\partial_{x}u^+$, denoted $\mathcal{F}(\partial_x u^+)(P,Q_0,\tau)$, is independent of $\omega$ and  reads as follows
\begin{equation}\label{u_xomega}
\mathcal{F}(\partial_x u^+)(y,z,Q_0,\tau)=\left(\frac{\tau}{2\pi}\right)^{1+d_2}\tau^{2/3}\int e^{\ii\tau(y\alpha+<z,\gamma>)}\frac{i e^{-i\pi/3} b_{\del}(y,z,\alpha,\gamma,\tau)}{A_+(\tau^{2/3}\zeta_0)} \hat{F}_{\tau}(\tau\alpha,\tau\gamma)d\alpha d\gamma,
\end{equation}
where $\zeta_0(\alpha,\gamma)=\alpha^{2/3}\tilde\zeta(\sqrt{1-\gamma^2}/\alpha)$ with $\tilde\zeta$ defined in Lemma \ref{lemzeta} ($\zeta_0(\alpha,\gamma)=\alpha^{2/3}\tilde\zeta(1/\alpha)$ if $d_2=0$) and where the symbol $b_{\partial}$ is elliptic, essentially supported for $\tilde \alpha=\frac{\alpha}{\sqrt{1-|\gamma|^2}}$ near $1$ and reads as
\begin{equation}\label{bjbord}
b_{\partial}(y,z,\alpha,\gamma,\tau):=a(0,y,z,\alpha,\gamma,\tau)\tilde\alpha^{-1/3}(\partial_\rho\tilde\zeta)(\frac{1+x}{\tilde\alpha})+\tau^{-1}\partial_x b(0,y,z,\alpha,\gamma,\tau).
\end{equation}
\end{prop}
\begin{proof}
We compute the normal derivatives of \eqref{windfn} and \eqref{MtauJ-1tau} : one derivative which falls on $A(\tau^{2/3}\zeta)$ (or $A_{+}(\tau^{2/3}\zeta)$) yields $\tau^{2/3}\partial_x\zeta=(\tau\sqrt{1-\gamma^2})^{2/3}\tilde\alpha^{-1/3}\partial_x\tilde \zeta$. Taking the difference yields \eqref{u_xomega} with $(A'-A_{+}'\frac{A}{A_{+}})$ (instead of $ie^{-i\pi/3}/A_+$) and using the Wronskian relation $A'(z)A_{+}(z)-A'_{+}(z)A(z)=\ii e^{-\ii \pi/3}$ allows to conclude.
 As $a|_{x=0}$ is elliptic near $\mathcal{C}^{d_1,d_2}_{Q_0}$ and $|\partial_x\tilde\zeta(\frac{1}{\tilde\alpha})|\sim 2^{1/3}$, $b_{\partial}$ must have the same properties as $a|_{x=0}$. 
\end{proof} 

The next lemma, proved in section \ref{seclemF} below, provides the explicit form of $\hat{F}_{\tau}(\tau\alpha,\tau\gamma)$.
\begin{lemma}\label{lemmaF} 
The Fourier transform of the function $F_{\tau}$ satisfying \eqref{windfn} has the following form :
\begin{itemize}
\item when $d_2=0$, $\hat{F_{\tau}}(\tau\alpha)$ is essentially supported for $|1-\alpha|$ small, and equals, modulo $O(\tau^{-\infty})$, 
\begin{equation}\label{FourierformFball}
\hat{F_{\tau}}(\tau\alpha)=
\frac{\tau^{\frac 23+\frac{d-3}{2}}}{\sqrt{s^2-1}^{\frac{d-1}{2}}}e^{-i\tau\Gamma_0(y_*(s),\alpha,s)}
 \kappa_{\e_1}(1-\alpha)f(\alpha,s,\tau),
\end{equation} 
for some small $\e_1$ (where the cut-off $\kappa_{\e_1}$ is as in Definition \ref{dfnkappa}), where $f$ is an elliptic asymptotic expansion with main contribution $\Sigma_{0,d}$ (defined in \eqref{vfreeform}) and where the phase function is given by 
\begin{equation}\label{defGamm}
\Gamma_0(y_*(s),\alpha,s)=y_*(s)\alpha+\sqrt{s^2-1}+\frac{(1-\alpha)^2}{2\sqrt{s^2-1}}(1+O(1-\alpha)), \text{ for } \alpha \text{ on the support of } \kappa_{\e_1}(1-\alpha).
\end{equation}

\item when $d_2\geq 1$, $\hat{F_{\tau}}(\tau\alpha,\tau\gamma)$ is supported for $|1-\frac{\alpha}{\sqrt{1-|\gamma|^2}}|$, $|\gamma|$ small and equals, modulo $O(\tau^{-\infty})$,
\begin{equation}\label{FourierformF}
\hat{F_{\tau}}(\tau\alpha,\tau\gamma)=
\frac{\tau^{\frac 23+\frac{d_1-3}{2}}}{\sqrt{s^2-1}^{\frac{d_1-1}{2}}}e^{-i\tau\sqrt{1-|\gamma|^2}\Gamma_0(y_*(s), \tilde \alpha,s)}
 \kappa_{\e_1}(1-\tilde\alpha)f(\tilde\alpha,\gamma,s,\tau),\quad \tilde\alpha=\frac{\alpha}{\sqrt{1-|\gamma|^2}},
\end{equation} 
for some small $\e_1$, where $f$ is an elliptic symbol with main contribution $\Sigma_{0,d}$ (from \eqref{vfreeform}) and $\Gamma_0(y_*(s), \tilde\alpha,s)$ is the given in \eqref{defGamm} for $\tilde\alpha (:=\alpha/\sqrt{1-|\gamma|^2})$ on the support of $\kappa_{\e_1}(1-\tilde\alpha)$.
\end{itemize}
\end{lemma}

\begin{rmq}\label{rmkball}
In the exterior of a cylinder in $\mathbb{R}^3$, we have obtained \eqref{FourierformF} in \cite[Lemma 2.9]{IaIv23} for $d_1=2$ and $d_2=1$. In the exterior of a ball in $\mathbb{R}^d$, we have announced \eqref{FourierformFball}  in \cite{ildispext} for $d_2=0$, without details. 
\end{rmq}

Replacing the explicit form of $\hat{F_{\tau}}$ from (\eqref{FourierformFball}, or, more general) \eqref{FourierformF} in \eqref{u_xomega} yields
\begin{cor}\label{corw0gl}
For $P=(0,y,\omega,z)\in\mathbb{S}^{d_1-1}\times \mathbb{R}^{d_2}$ near $\mathcal{C}^{d_1,d_2}_{Q_0}$ and with $|z|\ll s$, the following holds
\begin{itemize}
\item when $d_1=d$ and $d_2=0$, $\mathcal{F}(\partial_x u^+)(y,Q_0,\tau)$ becomes 
\begin{equation}\label{u_xomega23}
\mathcal{F}(\partial_x u^+)(y,Q_0,\tau)=\frac{\tau}{2\pi}
\frac{\tau^{\frac{1}{3}+\frac{d-1}{2}}}{\sqrt{s^2-1}^{\frac{d-1}{2}}}\int e^{i\tau (y\alpha-\Gamma_0(y_*(s),\alpha,s))}   
\kappa_{\e_1}(1-\alpha)\frac{i e^{-i\pi/3}b_{\partial}(y,\alpha,\tau)}{A_+(\tau^{2/3}\zeta_0(\alpha))}f(\alpha,s,\tau)d\alpha.
\end{equation}

\item when $d_2\geq 1$, $\mathcal{F}(\partial_x u^+)(y,z,Q_0,\tau)$ becomes 
\begin{multline}\label{u_xomega2}
\mathcal{F}(\partial_x u^+)(y,z,Q_0,\tau)=\left(\frac{\tau}{2\pi}\right)^{1+d_2}
\frac{\tau^{\frac{1}{3}+\frac{d_1-1}{2}}}{\sqrt{s^2-1}^{\frac{d_1-1}{2}} }\int e^{i\tau (y\alpha+<z,\gamma>-\sqrt{1-|\gamma|^2}\Gamma_0(y_*,(s)\frac{\alpha}{\sqrt{1-|\gamma|^2}},s))}\\\times 
\kappa_{\e_1}\Big(1-\frac{\alpha}{\sqrt{1-|\gamma|^2}}\Big)\frac{i e^{-i\pi/3}b_{\partial}(y,z,\alpha,\gamma,\tau)}{A_+(\tau^{2/3}\zeta_0(\alpha,\gamma))}  f\Big(\frac{\alpha}{\sqrt{1-|\gamma|^2}},\gamma,s,\tau\Big)d\alpha d\gamma.
\end{multline}
\end{itemize}

\end{cor}

\subsubsection{Proof of Lemma \ref{lemmaF} for $d_2=0$}\label{seclemF}
In this section we prove Lemma \ref{lemmaF} in the case $d_2=0$, when the obstacle is a ball in $\R^{d_1}$, and in the next section we explain how to deal with the additional variable $z$ in $\mathbb{R}^{d_2}$ when $d_2\geq 1$. We need to obtain $F_{\tau}$ such that $T_{\tau}(F_{\tau})=w_{in}(\cdot,Q_0,\tau)+O(\tau^{-\infty})$ near a glancing point on $\del\Omega_{d_1,0}$, where $\Omega_{d_1,0}=\mathbb{R}^{d_1}\setminus B_{d_1}(0,1)$ and where $Q_0=Q_N(s)$, hence we need to "invert" in some way $T_{\tau}$ near a glancing point. 
Using the integral form of the Airy function and its derivative, the operator $T_{\tau}: \mathcal{E}'(\mathbb{R})\rightarrow \mathcal{D}'(\mathbb{R}^{2})$ defined in \eqref{T-tau} (where we now take $d_2=0$) reads as
\[
T_{\tau}(F)(x,y)=\tau^{1/3}\frac{\tau}{2\pi}\int  e^{i\tau(\iota(y,\alpha)-\tilde y\alpha+\sigma^3/3+\sigma\zeta(x,\alpha))} (a+b\sigma/i)F(\tilde y)d\sigma d\tilde y d\alpha,
\]
where the phase functions $\iota$ and $\zeta$ are defined in \eqref{soleikballd}, $\iota(y,\alpha)=y\alpha$ and $\zeta(x,\alpha)=\alpha^{2/3}\tilde\zeta(\frac{1+x}{\alpha})$. 

According to Proposition \ref{MT}, at the glancing point in $T^*\mathbb{S}^{1}$ the trace on the boundary of $\zeta(x,\alpha)$ cancels only at a glancing direction and the symbols $a$, $b$ are supported near the glancing point; as $\zeta_0(\alpha)=0$ only at $\alpha=1$, we can introduce in the integral defining $T_{\tau}$ a cut-off $\kappa_{\e_1}(1-\alpha)$ with $\kappa_{\e_1}\in C^{\infty}_0((-\e_1,\e_1))$ as in Definition \ref{dfnkappa} supported for $|1-\alpha|<\e_1$ for some sufficiently small $\e_1>0$ depending on $\e_0$, such that $\kappa_{\e_1}(1-\alpha)=1$ on the support of $a$,$b$, so without changing the contribution of $T_{\tau}$ modulo $O(\tau^{-\infty})$. 

A point of $\mathcal{C}^{d,0}_{Q_0}$ is of the from $(0,y_*(s),\omega)$ with $y_*(s)=\arcsin(1/s)$: as $w_{in}(Q,Q_0,\tau)$ (from \eqref{vfreeball}) is independent of $\omega$, we need to "invert" $T_{\tau}$ near $(0,y_*(s))$ to find $F_{\tau}$.
Let $\varkappa\in C^{\infty}_0(\mathbb{R}^2)$ be a smooth cut-off function supported for $(x,y)$ in a sufficiently small neighborhood $\mathcal{V}_{(0,y_*(s))}$ of $(0,y_*(s))$ and equal to $1$ in near $(0,y_*(s))$, with $0\leq \varkappa\leq 1$ and such that $x<c$ on the support of $\varkappa$:  we aim at obtaining $F_{\tau}$ such that
\[
\varkappa(x,y)T_{\tau}(F_{\tau})(x,y)=\varkappa(x,y) w_{in}(x,y,Q_0,\tau)+O(\tau^{-\infty}).
\]
We define $\tilde T_{\tau}: \mathcal{D}'(\mathbb{R}_{\tilde y})\rightarrow \mathcal{D}'(\mathcal{V}_{(0,y_*)})$  by $\tilde T_{\tau}:=\varkappa T_{\tau}$, (and with $\kappa_{\e_1}(1-\alpha)$ in the integral defining $T_{\tau}$),
\begin{equation}\label{inttildeT}
\tilde T_{\tau}(F)(x,y):=\tau^{1/3}\frac{\tau}{2\pi}\int  e^{i\tau((y-\tilde y)\alpha+\sigma^3/3+\sigma\zeta(x,\alpha))} (a+b\sigma/i)\varkappa(x,y)\kappa_{\e_1}(1-\alpha)F(\tilde y)d\tilde y d\sigma d\alpha.
\end{equation}
\begin{lemma}
The operator $\tilde T_{\tau}$ is well defined microlocally near $(y,\alpha)=(y_*(s),1)$ and $(\tau^2+\Delta)\tilde T_{\tau}(F)\in O(\tau^{-\infty})$ for $(x,y)$ near $(0,y_*(s))$ (where $\varkappa=1$). Moreover, for $F$ supported outside a small neighborhood of $y_*(s)$ (depending on $\e_0$ and the support of $\varkappa$) we have $\tilde T_{\tau}(F)(x,y)=O(\tau^{-\infty})$.
\end{lemma}
\begin{proof}
The operator $\tilde T_{\tau}$ is equal to $T_{\tau}$ modulo $O(\tau^{-\infty})$ (due to the cut-off $\kappa_{\e_1}$). As $\varkappa=1$ near $(0,y_*(s))$, near this point $\tilde T_{\tau}(F)$ satisfies the same equation as $T_{\tau}$. The critical points of \eqref{inttildeT} satisfy $\sigma^2=-\zeta(x,\alpha)$ and $y=\tilde y-\sigma^3/3-\sigma\zeta(x,\alpha)$, hence the phase may be stationary only for values $\tilde y$ satisfying
\[
|\tilde y -y_*(s)|\leq |y-y_*(s)|+|\sigma^3+\sigma\zeta(x,\alpha)| =  |y-y_*(s)|+\frac 23 |\zeta(x,\alpha)|^{3/2}.
\]
As $\zeta(x,\alpha)$ is close to $0$ only for $x$ and $\alpha$ on the support of $\varkappa(x,y)\kappa_{\e_1}(1-\alpha)$, $\tilde y$ must stay close to $y_*(s)$.
\end{proof}
Next, we construct the adjoint $\tilde T^*_{\tau}: \mathcal{D}'(\mathcal{V}_{(0,y_*)})\rightarrow \mathcal{D}'(\mathbb{R}_{\overline y})$ of $\tilde T_{\tau}$, show that the operator $\tilde T^{*}_{\tau}\circ \tilde T_{\tau}$ is an elliptic FIO, hence invertible, and then define $F_{\tau}:=(\tilde T^{*}_{\tau}\circ \tilde T_{\tau})^{-1}(\varkappa w_{in})$ modulo $O(\tau^{-\infty})$.
The adjoint reads as
\begin{equation}\label{tildeT*}
\tilde T^*_{\tau}(w)(\overline y):=\tau^{1/3}\frac{\tau}{2\pi}\int  e^{i\tau((\overline y-y)\tilde \alpha-\tilde\sigma^3/3-\tilde\sigma\zeta(x,\tilde\alpha))} (a-b\sigma/i)\varkappa(x,y)\kappa_{\e_1}(1-\tilde \alpha)w(x,y)dx dy d\tilde\sigma d\tilde\alpha\,,
\end{equation}
and for $(x,y,\tilde\alpha)$ on the support of the symbol, its phase may be stationary only for $\sigma$ close to $0$. We can introduce a smooth cut-off $\kappa_{\e_2}(\tilde\sigma)$ supported for $|\tilde \sigma|<\e_2$ for some $\e_2>0$ small enough (depending only on $\e_1$ and the support of $\varkappa$), without changing $\tilde T_{\tau}^*(w)$ modulo $O(\tau^{-\infty})$. We now obtain 
\begin{lemma}\label{lemEtau}
Let $E_{\tau}:=(\tau^{1/3}\tilde T^*_{\tau})\circ \tilde T_{\tau}: \mathcal{D}(\mathbb{R}_{\tilde y})\rightarrow \mathcal{D}(\mathbb{R}_{\tilde y})$, then, microlocally near $(y,\alpha)=(y_*(s),1)$, $E_{\tau}$ is an elliptic pseudo-differential operator of degree $0$, hence invertible. Setting $F_{\tau}=E^{-1}_{\tau}\circ (\tau^{1/3}\tilde T^*_{\tau})(w_{in}(.,\tau))$ yields $\tilde T_{\tau}(F_{\tau})=w_{in}(.,\tau)+O(\tau^{-\infty})$ microlocally near $(y_*(s),1)$.
\end{lemma}
\begin{proof}
As $E_{\tau}:=\tau^{1/3}\tilde T^*_{\tau}\circ \tilde T_{\tau}$, using \eqref{inttildeT}, \eqref{tildeT*}, we explicitly compute
\begin{multline*}
E_{\tau}(F)(\overline y)=\frac{\tau^{3}}{4\pi^2}\int e^{i\tau\Big((\overline y-y)\tilde \alpha-\tilde \sigma^3/3-\tilde\sigma\zeta(x,\tilde\alpha)+(y-\tilde y)\alpha +\sigma^3/3+\sigma\zeta(x,\alpha)\Big)}\kappa_{\e_2}(\tilde\sigma)\kappa_{\e_1}(1-\alpha)\kappa_{\e_1}(1-\tilde\alpha)\\
\times (\varkappa(x,y))^2(a-b\tilde\sigma/i)(x,y,\tilde\alpha)(a+b\sigma/i)(x,y,\alpha)F(\tilde y)d\tilde y d\tilde\sigma d\tilde\alpha dx dy d\sigma d\alpha\,. 
\end{multline*}
Stationary phase applies in $y,\alpha$, with critical points $y(\tilde y, \sigma,\alpha):=\tilde y-\sigma\partial_{\alpha}\zeta(x,\alpha)$ and $\tilde\alpha=\alpha$; the critical value equals $(\overline y-\tilde y)\alpha-\tilde\sigma^3/3-\tilde\sigma\zeta(x,\alpha)+\sigma^3/3+\sigma\zeta(x,\alpha)$ and the symbol becomes 
\[
\tau^2\kappa_{\e_2}(\tilde\sigma)\kappa_{\e_1}(1-\alpha)\tilde\varkappa(x,y)(\tilde a-\tilde b\tilde\sigma/i)(x,y,\alpha)(\tilde a+\tilde b\sigma/i)(x,y,\alpha)|_{y=y(\tilde y,\sigma,\alpha)}
\]
where $\tilde a$, $\tilde b$ are asymptotic expansions with small parameter $\tau^{-1}$ and main contribution $a(x,y,\alpha)$, and $b(x,y,\alpha)$ which now depend upon $\sigma$ through $y=y(\tilde y, \sigma,\alpha)$; here $\tilde \varkappa$ is a smooth cut-off supported for $(x,y(\tilde y, \sigma,\alpha))$ near $(0,y_*(s))$. We further apply the stationary phase with respect to $x$ and $\tilde \sigma$ with critical points $\tilde\sigma^2+\zeta(x,\alpha)=0$ and $\partial_x\zeta(x,\alpha)(\tilde \sigma-\sigma)=0$. As $\partial_x\zeta(0,1)=\partial_x\tilde\zeta(1)=-2^{1/3}$, then $\partial_x\zeta(x,\alpha)$ does not vanish near $x=0$, $\alpha=1$ and the critical points satisfy $\tilde\sigma=\sigma$, $\zeta(x,\alpha)=-\sigma^2$. This yields $x_c$ as a smooth function of $\sigma^2$ and $\alpha$ for $\sigma$ close to $0$ and for $\alpha$ close to $1$. 
The second derivative w.r.t $x$, $\partial^2_{x}\zeta (x,\alpha) (\tilde\sigma-\sigma)$, vanishes at $\tilde\sigma=\sigma$, therefore the absolute value of the determinant of the Hessian matrix equals $|\partial_x\zeta(x,\alpha)|$, which is close to $2^{1/3}$ near  $x=0,\alpha=1$. The critical value of the phase becomes $(\overline y-\tilde y)\alpha$, and the symbol has main contribution $a^2+b^2\sigma^2$ with $\sigma$ close to $0$ on the support of $\kappa_{\e_2}$ and with $a$ elliptic near $(0,y_*(s),1)$. Integration w.r.t. $\sigma$ (which is no more in the phase) yields
\begin{equation}\label{formEtau}
E_{\tau}(F)(\overline y) 
= \frac{\tau}{2\pi}\int e^{i\tau(\overline y-\tilde y)\alpha}\kappa_{\e_1}(1-\alpha)\tilde\varkappa(x_c,\tilde y)\Sigma_E (\tilde y, \alpha,\tau)
 F(\tilde y)d\tilde y d\alpha\,,
\end{equation}
where 
$\Sigma_{E}(\tilde y,\alpha,\tau)$ is a symbol of order $0$, elliptic at $(y_*(s),1)$ that reads as an asymptotic expansion with main contribution $a^2$ and small parameter $\tau^{-1}$. Microlocally near $(y_*(s),1)$, the operator $E_{\tau}$ has an inverse $E^{-1}_{\tau}$ with an elliptic symbol $\kappa_{\e_1}(1-\alpha)\Sigma_{E^{-1}}(\tilde y,\alpha,\tau)$ of order $0$ and supported for $(\tilde y,\alpha)$ near $(y_*(s),1)$. 
Let $F_{\tau}: =E^{-1}_{\tau}(\tau^{1/3}\tilde T^*_{\tau})(\varkappa w_{in})$, then 
$F_{\tau}$ satisfies \eqref{windfn} near $\mathcal{C}^{d,0}_{Q_0}$ and, modulo $O(\tau^{-\infty})$ terms, it is given by
\begin{align}\label{Ftaufff}
F_{\tau}(\tilde y)= &\frac{\tau}{2\pi}\int e^{i\tau(\tilde y-\overline{y})\alpha}\kappa_{\e_1}(1-\alpha)\Sigma_{E^{-1}}(\tilde y,\alpha,\tau)(\tau^{1/3}\tilde T^*_{\tau})(\varkappa w_{in})(\overline y)d\overline y d\alpha\\
= &
    \begin{multlined}[t]
      \frac{\tau}{2\pi}\int e^{i\tau(\tilde y-\overline y)\alpha }\kappa_{\e_1}(1-\alpha)\Sigma_{E^{-1}}(\tilde y,\alpha,\tau)\tau^{1/3}\frac{\tau}{2\pi}\int e^{i\tau(\overline y-y)\tilde\alpha}
\Big( aA(\tau^{2/3}\zeta(x,\tilde \alpha))- b\tau^{-1/3}A'(\tau^{2/3}\zeta(x,\tilde\alpha))\Big)\\
\kappa_{\e_1}(1-\tilde\alpha) \varkappa (x,y)w_{in}(x,y,Q_0,\tau)dx dy d\tilde \alpha d\overline y d\alpha\,
\end{multlined}\\
=&
 \begin{multlined}[t]
\frac{\tau^{4/3}}{4\pi^2}\int e^{i\tau(\tilde y- y)\alpha }\kappa_{\e_1}(1-\alpha) \Sigma_{F}(\tilde y,\alpha,\tau)
\Big(aA(\tau^{2/3}\zeta(x,\alpha))-b\tau^{-1/3}A'(\tau^{2/3}\zeta(x,\alpha))\Big)\\ \varkappa(x,y) w_{in}(x,y,Q_0,\tau)dx dy d\alpha,
\end{multlined}
\end{align}
where one factor $\tau^{1/3}$ was used integrating over $\sigma$ in $\tilde T^*_{\tau}$, to obtain a linear combination of Airy functions and where, to obtain the last line we have applied the stationary phase with respect to $\overline y$ and $ \alpha$ with $\alpha=\tilde \alpha$, $\overline y=y$, which provided a factor $\tau^{-1}$. The symbol $\Sigma_F$ is an asymptotic expansion with main contribution $\kappa_{\e_1}(1-\alpha)\Sigma_{E^{-1}}(\tilde y,\alpha,\tau)$ and small parameter $\tau^{-1}$. It remains to deal with $\varkappa w_{in}$. 
\end{proof}
The next lemma allows to express $\varkappa w_{in}$ as an oscillatory integral with phase $\zeta$ given in Lemma \ref{lemzeta}.
\begin{lemma}\label{lemwingl}
For $Q_0=Q_N(s)$ and observation points $(x,y,\omega)$ near $\mathcal{C}^{d,0}_{Q_0}$, $\varkappa w_{in}(\cdot,Q_0,\tau)$ reads as
\begin{multline}\label{wjglformlem}
\varkappa(x,y)w_{in}(x,y,Q_0,\tau)=\frac{\tau}{2\pi} \Big(\frac{\tau}{\sqrt{s^2-1}}\Big)^{\frac{d-1}{2}}\varkappa(x,y)\int \kappa_{\e_3}(1-\tilde\alpha) \Sigma_{w}(x,\tilde\sigma,\tau)\\
\times e^{i\tau\big(y\tilde \alpha- \Gamma_0(y_*(s),\tilde\alpha,s)-\tilde\sigma^3/3-\tilde\sigma \zeta(x,\tilde\alpha)\big)} d\tilde \sigma d\tilde\alpha,
\end{multline}
where 
$\Sigma_{w}(x,\tilde\sigma,\tau)$ is an elliptic symbol of order $0$ and for a small $\e_3>0$ depending on the support of $\varkappa$, $\e_0$.
\end{lemma}
The proof of Lemma \ref{lemwingl} is postponed to the end of this section. 
We first show how to recover $F_{\tau}$ using \eqref{Ftaufff} and Lemma \ref{lemwingl}. 
We introduce \eqref{wjglformlem} in \eqref{Ftaufff} and replace the Airy bracket in \eqref{Ftaufff} by
\[
aA(\tau^{2/3}\zeta(x,\alpha))-b\tau^{-1/3}A'(\tau^{2/3}\zeta(x,\alpha))=\tau^{1/3}\int e^{i\tau(\frac{\sigma^3}{3}+\sigma\zeta(x,\alpha))}(a-b\sigma/i)d\sigma.
\]
As before, we can introduce a cut-off $\kappa_{\e_2}(\sigma)$ in the last integral without changing the contribution of the integral \eqref{Ftaufff}, as, for $\sigma$ outside a small neighborhood of $0$, the phase of $F_{\tau}(\tilde y)$ is non-stationary.

After stationary phase in $y, \alpha$, the phase of $F_{\tau}(\tilde y)$ becomes 
$\tilde y \alpha+\frac{\sigma^3}{3}+\sigma \zeta(\tilde x,\alpha)-\frac{\tilde \sigma^3}{3}-\tilde\sigma \zeta(\tilde x,\alpha)-\Gamma_0(y_*(s),\alpha,s)$. The stationary phase with respect to $x$, $\tilde \sigma$ also applies (as in Lemma \ref{lemEtau}) : at the critical points $\tilde\sigma=\sigma$ and $\tilde\sigma^2=-\zeta(x,\alpha)$ the hessian matrix has determinant $|\partial_{x}\zeta(x,\alpha)|\sim 2^{1/3}$. The stationary phase yields a factor $\tau^{-1}$ and the critical phase equals $\tilde y \alpha-\Gamma_0(y_*(s),\alpha,s)$. Hence, we obtain, modulo $O(\tau^{-\infty})$ terms
\begin{equation}\label{integralformFtauzII}
F_{\tau}(\tilde y)=\tau^{4/3+1/3-2}\frac{\tau}{2\pi}\Big(\frac{\tau}{\sqrt{s^2-1}}\Big)^{\frac{d-1}{2}} \int\int e^{i\tau (\tilde y \alpha-\Gamma_0(y_*(s),\alpha,s))}\kappa_{\e_1}(1-\alpha) \kappa_{\e_2}(\sigma)\tilde\Sigma_{F}(\tilde y,\alpha,\sigma,\tau) 
 d\sigma d\alpha,
\end{equation}
where $\tilde\Sigma_F $, has main contribution $\Sigma_{w}\Sigma_F(a-b\sigma/i)$, hence it is elliptic near $\sigma=0$, $\alpha=1$. Integrating in $\sigma$ (which appears only in the symbol and belongs to  a small neighborhood of $0$) and taking the Fourier transform achieves the proof of Lemma \ref{lemmaF}. In the following we prove Lemma \ref{lemwingl}.

\begin{proof}(of Lemma \ref{lemwingl})
Recall that $w_{in}$ is given in \eqref{winform}, with phase $\phi(x,\tilde y,0,s)$ defined in \eqref{Phi} (as $d_2=0$). If $|y-\tilde y|\geq \e>0$ for some $\e>0$, repeated integrations by parts w.r.t. $\alpha$ yield $O(\tau^{-\infty})$; as $y$ stays close to $y_*(s)$ on the support of $\kappa_{\e_0}$, we can introduce $\kappa_{2\e_0}(\tilde y-y_*(s))$ in the integral without changing its contribution modulo $O(\tau^{-\infty})$. Consider the integral in \eqref{winform} : as $\phi$ has a degenerate critical point of order two near $y_*(s)$, we will prove in Lemma \ref{lemgam} below that there exists a change of variables $\tilde y\rightarrow \sigma$ which transforms $\tilde y \alpha+\phi(x,\tilde y,0,s) $ into an Airy phase function of the form $\frac{\sigma^3}{3}+\sigma \alpha^{2/3}\tilde\zeta(\frac{1+x}{\alpha})+\Gamma_0(y_*(s),\alpha,s)$ (which is the normal form of functions with critical points of order two). As $\phi$ and $\iota\pm \frac 23(-\zeta)^{3/2}$ from Proposition \ref{MT} solve the same eikonal equation \eqref{MT-para}, it turns out that $\tilde\zeta$ is precisely the function defined in Lemma \ref{lemzeta}.  
Let 
\begin{equation}\label{phi}
\Phi(x,\tilde y,\alpha, s):=\tilde y \alpha+\phi(x,\tilde y,0,s).
\end{equation}
As $|\tilde y-y_*(s)|\leq 2\e_0$ on the support of $\kappa_{\e_0}$, $x$ is close to $0$ on the support of $\varkappa(x,y)$ and as moreover
\[
\partial_{\tilde y}\phi(x,\tilde y,0,s)=-\frac{s(1+x)\cos \tilde y}{\phi(x,\tilde y, 0,s)}, \quad \partial_{\tilde y}\phi(x,\tilde y,0,s)|_{\tilde y=\arcsin(\frac{1+x}{s})}=-(1+x),
\]
$\alpha$ has to stay close to $1+x$ as otherwise $\Phi$ is non-stationary w.r.t. $\tilde y$. Hence we can introduce $\kappa_{\e_3}(1-\alpha)$ in the symbol without changing its contribution modulo $O(\tau^{-\infty})$, with $\e_3>0$ small depending on $\e_0$ and the support of $\varkappa$.
As
$\partial_{\tilde y}^2\phi(x,\tilde y,0,s)=\frac{s(1+x)\sin \tilde y-(\partial_{\tilde y}\phi)^2}{\phi}$, it follows that $\partial_{\tilde y}^2\phi(x,\tilde y,0,s)=0$ when $\tilde y =\arcsin\left(\frac{1+x}{s}\right)$. 

\begin{lemma}\label{lemgam} Let $y_*(s,x):=\arcsin\left(\frac{1+x}{s}\right)$ and $\tilde y=y_*(s,x)+\mathtt{Y}$. 
There exists a unique change of variables $\mathtt{Y}\mapsto \sigma$, smooth and satisfying $\frac{d\mathtt{Y}}{d{\sigma}}\notin\{0,\infty\}$ so that, for $\tilde \zeta$ as in Lemma \ref{lemzeta} and $\Gamma_0$ as in \eqref{defGamm}, we have
\begin{equation}\label{fctssfin}
\Phi(x,y_*(s,x)+\mathtt{Y},\alpha,s)=\frac{{\sigma}^3}{3}+{\sigma}\alpha^{2/3}\tilde\zeta\big(\frac{1+x}{\alpha}\big)+\Gamma_0(y_*(s),\alpha,s).
\end{equation} 
\end{lemma}
We postpone the proof of Lemma \ref{lemgam} and continue the proof of Lemma \ref{lemwingl}.
After the changes of coordinates $\tilde y=y_*(s,x)+Y$ and $Y\rightarrow \sigma$ we obtain $\varkappa w_{in}$, modulo $O(\tau^{-\infty})$ terms, under the form
\begin{multline}\label{wjglformnew}
\varkappa(x,y) w_{in}(x,y,Q_0,\tau)=\frac{\tau}{2\pi}\Big(\frac{\tau}{\sqrt{s^2-1}}\Big)^{\frac{d-1}{2}}\varkappa(x,y)\int \Big(\frac{\sqrt{s^2-1}}{\phi(x,y_*(s,x)+Y(\sigma),0,s)}\Big)^{\frac{d-1}{2}}  \frac{dY}{d\sigma}\kappa_{\e_3}(1-\alpha)\\
\times \Sigma_d(\tau \phi(x,y_*(s,x)+Y(\sigma),0,s)) e^{i\tau(y\alpha-\Gamma_0(y_*(s),\alpha,s)-\sigma^3/3-\sigma \alpha^{2/3}\tilde\zeta(\frac{1+x}{\alpha}))} d\sigma d\alpha,
\end{multline}
where $|\frac{dY}{d\sigma}|\sim 1$ and $\sqrt{s^2-1}/\phi \sim 1$ for $(x, y_*(s,x)+Y(\sigma))$ on the support of $\varkappa$. 
Lemma \ref{lemwingl} follows taking 
\begin{equation}\label{symbnewwin}
\Sigma_w(x,\sigma,\tau):=\Big(\frac{\sqrt{s^2-1}}{\phi(x,y_*(s,x)+Y(\sigma),0,s)}\Big)^{\frac{d-1}{2}}  \frac{dY}{d\sigma}\Sigma_d(\tau \phi(x,y_*(s,x)+Y(\sigma),0,s)).
\end{equation}
\end{proof}

\begin{proof}(of Lemma \ref{lemgam}; this follows closely \cite[Lemma 2.10]{IaIv23})
As the phase $\Phi$ has degenerate critical points of order exactly two, it follows from \cite[7.7.18]{horFIO} that there exists a unique change of variables $\mathtt{Y}\mapsto \sigma$ which is smooth and satisfying $\frac{d\mathtt{Y}}{d{\sigma}}\notin\{0,\infty\}$ and that there exist smooth functions $\zeta^{\#}(x,\alpha,s)$ and $\Gamma(x,\alpha,s)$ such that
\begin{equation}\label{fctss}
\Phi(x,y_*(s,x)+\mathtt{Y},\alpha,s)=\frac{{\sigma}^3}{3}+{\sigma}\zeta^{\#}(x,\alpha,s)+\Gamma(x,\alpha,s).
\end{equation}

For $\tilde y$ near $y_*(s,x)$ there are two (non-degenerate) critical points $y_{\pm}=y_{\pm}(s,x,\alpha)$ of $\Phi$ satisfying
\begin{equation}\label{ycritpmsF}
s(1+x)\sin(y_\pm)=\alpha^2\pm\sqrt{s^2-\alpha^2}\sqrt{(1+x)^2-\alpha^2},  \phi(x,y_{\pm},0,s)=\sqrt{s^2-\alpha^2}\mp\sqrt{(1+x)^2-\alpha^2}.
\end{equation}
As the change of coordinates is regular, the critical points $\mathtt{Y}_{\pm}:=y_{\pm}(s,x,\alpha)-y_*(s,x)$ of $\Phi$ must correspond to $\sigma_{\pm}=\pm\sqrt{-{\zeta^{\#}}(x,\alpha,s)}$.
Write $\zeta^{\#}(x,\alpha,s):=\alpha^{\frac23}\tilde{\zeta}^{\#}\big(\frac{1+x}{\alpha},\alpha,s\big)$. 
We show that $\tilde\zeta^{\#}$ satisfies the same equation as $\tilde\zeta$ in \eqref{lemzeta}.
As the critical values of the two functions in \eqref{fctss} coincide, we have
\begin{equation}\label{sum-diff}
\Phi(x,y_*(s,x)+\mathtt{Y}_{\pm},\alpha,s)=\mp\frac23{(-\zeta^{\#})}^{\frac32}(x,\alpha,s)+\Gamma(x,\alpha,s),
\end{equation}
which implies
$\frac 43 \alpha{(-\tilde\zeta^{\#})}^{\frac32}(\frac{1+x}{\alpha},\alpha,s)=\Phi(x,y_{-},\alpha,s)-\Phi(x,y_{+},\alpha,s)$.
Taking the derivative with respect to $x$ in the last equation yields (with $y_{\pm}=y_*(s,x)+\mathtt{Y}_{\pm}$)
\begin{equation}\label{eltilde}
\begin{aligned}
2(-\partial_x\tilde\zeta^{\#})(-\tilde\zeta^{\#})^{\frac12}=&\partial_x\phi(x,y_*(s,x)+\mathtt{Y}_{-},0,s)-\partial_x\phi(x,y_*(s,x)+\mathtt{Y}_{+},0,s)\\
									  &-\partial_x y_+\partial_y\Phi(x,y_+,\tilde\alpha,s)+\partial_x y_{-} \partial_y\Phi(x,y_-,\tilde\alpha,s).
\end{aligned}\end{equation}
The last two terms in the second line of \eqref{eltilde} vanish as $y_{\pm}$ are the critical points of the function $\Phi$ with respect to $y$ ; for the same reason we have that $\partial_y\phi(x,y_{\pm}(s,x,\alpha),0,s)=-\alpha$. As $\phi$ satisfies the eikonal equation $(\partial_x\phi)^2(x,y,0,s)+\frac{1}{(1+x)^2}(\partial_y\phi)^2(x,y,0,s)=1$,
then $(\partial_x\phi(x,y_{\pm},0,s))^2=1-\frac{\alpha^2}{(1+x)^2}$.
Moreover, $\partial_x\phi_{|y_{\pm}}=\frac{s}{\phi(x,y_{\pm},0,s)}(\frac{1+x}{s}-\sin(y_{\pm}))$ which is non positive in the ``$y_+$ case" and positive in the ``$y_{-}$ case".
Eventually we obtain, using \eqref{eltilde} and the corresponding signs of $\partial_x\phi$, $-\tilde\zeta^{\#}[-\partial_x\tilde\zeta^{\#}]^2=1-\frac{\alpha^2}{(1+x)^2}$,
which is the same equation as in Lemma \ref{lemzeta} with $\rho=\frac{1+x}{\alpha}$. As the degenerate critical point occurs at $\sigma=0$, hence at $\zeta^{\#}=0$, we deduce by uniqueness of the solution that $\tilde\zeta^{\#}=\tilde\zeta=\tilde\zeta(\frac{1+x}{\alpha})$.\\

Next, we compute the explicit form of the function $\Gamma(x,\alpha,s)$. Taking the sum in \eqref{sum-diff} gives $\Gamma(x,\alpha,s)=\tfrac12(\Phi(x,y_+(s,x,\alpha),\alpha,s)+\Phi(x,y_{-}(s,x,\alpha),\alpha,s))$ ; taking the derivative in $x$ yields $\partial_x\Gamma(x,\alpha,s)=0$. As such, $\Gamma$ is independent of $x$ ; setting $\Gamma_0(y_*(s),\alpha,s):=\Gamma(0,\alpha,s)$, then
\[
\Gamma_0(y_*(s),\alpha,s)=\frac{1}{2}\big((y_++y_-)\alpha+\phi(0,y_+,0,s)+\phi(0,y_-,0,s)\big),
\]
where $y_{\pm}=y_{\pm}(s,0,\alpha)$. We develop the right hand side term in the equality above near $\alpha=1$. For $(x,y)$ near $(0,y_*(s))$, $y$ remains sufficiently close to $y_*(s,x)$ : shrinking the support of $\varkappa$ if necessary, we may assume $|y-y_*(s,x)|<1/2$ for all $(x,y)$ on the support of $\varkappa$. For $|y_{\pm}-y_*(s)|<1/2$ we may compute, using \eqref{ycritpmsF} with $x=0$, the first approximation of $y_{\pm}$ :  we have
\begin{equation}\label{ypm}
y_{\pm}(s,0,\alpha)=\arcsin \Big(\frac{\alpha^2}{s}\pm\sqrt{1-\alpha^2}\sqrt{1-\frac{\alpha^2}{s^2}}\Big), \quad y_\pm(s,0,1)=\arcsin(\frac 1s)=y_*(s).
\end{equation}
As $\Gamma_0(y_*(s),\alpha,s)=\frac 12(\Phi(0,y_+,\alpha,s)+\Phi(0,y_-,\alpha,s))$ and $\partial_y\Phi|_{y_{\pm}}=0$ then
\[
\partial_{\alpha}\Gamma_0(y_*(s),\alpha,s)=\frac 12(y_++y_-)+\frac 12 \sum_{\pm}\partial_{\alpha}y_{\pm}\partial_y\Phi|_{y_{\pm}}=\frac 12(y_++y_-).
\]
This yields $\Gamma_0(y_*(s),1,s)=\sqrt{s^2-1}+\arcsin \frac{1}{s}$ and $\partial_{\alpha}\Gamma_0(y_*(s),1,s)=\arcsin(1/s)$. We need the higher order derivatives : using \eqref{ypm}, it follows that $(y_++y_-)$ reads as an asymptotic expansion of even powers of $\sqrt{1-\alpha^2}$ and with main term $\arcsin (\frac{\alpha^2}{s})$. We find, with $Z_{\pm}=\frac{\alpha^2}{s}\pm\sqrt{1-\alpha^2}\sqrt{1-\frac{\alpha^2}{s^2}}$, $Z_{\pm}|_{\alpha=1}=\frac{1}{s}$,
\[
\frac 12 \partial_{\alpha}(y_++y_-)=\frac{\alpha}{s}\Big(\frac{1}{\sqrt{1-Z_+^2}}+\frac{1}{\sqrt{1-Z_-^2}}\Big)-\frac{\alpha(s^2+1-2\alpha^2)}{2s^2\sqrt{1-\alpha^2}\sqrt{1-\frac{\alpha^2}{s^2}}}\Big(\frac{1}{\sqrt{1-Z_+^2}}-\frac{1}{\sqrt{1-Z_-^2}}\Big).
\] 
As $\Big(\frac{1}{\sqrt{1-Z_+^2}}-\frac{1}{\sqrt{1-Z_-^2}}\Big)=\frac{Z_+^2-Z_-^2}{\sqrt{1-Z_+^2}\sqrt{1-Z_-^2}(\sqrt{1-Z_+^2}+\sqrt{1-Z_-^2})}$ and $Z_+^2-Z_-^2=4\frac{\alpha^2}{s}\sqrt{1-\alpha^2}\sqrt{1-\frac{\alpha^2}{s^2}}$,
\[
\frac 12 \partial_{\alpha}(y_++y_-)=\frac{\alpha}{s}\Big(\frac{1}{\sqrt{1-Z_+^2}}+\frac{1}{\sqrt{1-Z_-^2}}\Big)-\frac{2\alpha^3(s^2+1-2\alpha^2)}{s^3\sqrt{1-Z_+^2}\sqrt{1-Z_-^2}(\sqrt{1-Z_+^2}+\sqrt{1-Z_-^2})}.
\]
At $\alpha=1$ we obtain $\partial^2_{\alpha}\Gamma_0(y_*(s),1,s)=\frac 12 \partial_{\alpha}(y_++y_-)|_{\alpha=1}=\frac{1}{\sqrt{s^2-1}}$. In the same way we notice that all the higher order derivatives of $\Gamma_0$ come with a factor $\frac{1}{\sqrt{s^2-1}}$. The proof of Lemma \ref{lemgam} is achieved.
\end{proof}

\subsubsection{Proof of Lemma \ref{lemmaF} for $d_2\geq 1$} \label{seclemFcyl}
We now we explain how to obtain the proof of Lemma \ref{lemmaF} from the case $d_2=0$, when the obstacle is a cylinder in $\R^{d_1+d_2}$ when $d_2\geq 1$. At a point on $\mathcal{C}^{d_1,d_2}_{Q_0}$, of the form $(0,y_*(s),\omega,z)$, the phase $\zeta(x,\alpha,\gamma)|_{x=0}$ cancels only at a glancing direction, that is for $\frac{\alpha}{\sqrt{1-|\gamma|^2}}=1$. Let $T_{\tau}$ as in \eqref{T-tau} and introduce $\tilde T_{\tau}$ (as in \eqref{tildeT*} with with additional variables $(z,\gamma)$, a factor $(\frac{\tau}{2\pi})^{d_2}$ and) with cut-off $\varkappa(x,y) \kappa_{\e_1}(1-\frac{\alpha}{\sqrt{1-|\gamma|^2}})$. We then proceed in exactly the same way as before (defining $\tilde T^*_{\tau}$ and $E_{\tau}$) to obtain $F_{\tau}(y,z)$ such that $\varkappa w_{in}=\tilde T_{\tau}(F_{\tau})+O(\tau^{-\infty})$, by systematically eliminating the $z,\gamma$ variables by usual stationary phase, which lead to a very similar formula for $E_{\tau}$. 
As the critical points with respect to $\tilde z,\gamma$ in $w_{in}$ satisfy $z=\tilde z=-\phi(0,\tilde y,0,s)\frac{\gamma}{\sqrt{1-|\gamma|^2}}$ with $\phi(0,\tilde y,0,s)\sim s$, the condition $|z|\ll s$ is useful as it implies $\Big|\frac{\gamma}{\sqrt{1-|\gamma|^2}}\Big|\ll 1$, hence $|\gamma|\ll1$ and $\sqrt{1-|\gamma|^2}\sim 1$, avoiding the (degenerate) situation when $(\alpha,|\gamma|)$ is near $(0,1)$ which corresponds to initial directions almost parallel to the $Oz$ axis for which stationary methods do not hold as the Hessian may be too small. Lemma \ref{lemwingl} in the case $d_2\geq 1$ yields $\varkappa w_{in}$ as follows 

\begin{multline}\label{wjglformlemcyl}
\varkappa(x,y)w_{in}(x,y,Q_0,\tau)=\Big(\frac{\tau}{2\pi}\Big)^{1+d_2}\Big(\frac{\tau}{\sqrt{s^2-1}}\Big)^{(d_1-1)/2}\varkappa(x,y)\int \kappa_{\e_3}(1-\frac{\alpha}{\sqrt{1-|\gamma|^2}}) \\
\times\Sigma_{w}(x,\tilde\sigma,\gamma,\tau)
 e^{i\tau\big(<z,\gamma>+y\alpha -\sqrt{1-|\gamma|^2}\Gamma_0\big(y_*(s),\frac{\alpha}{\sqrt{1-|\gamma|^2}},s\big)+\tilde\sigma^3/3+\tilde\sigma \zeta(x,\alpha,\gamma)\big)} d\tilde \sigma d\alpha d\gamma,
\end{multline}
where $\Sigma_{w}$ as in \eqref{symbnewwin} with $\sqrt{1-|\gamma|^2}^{\frac{d_1}{2}}\Sigma_d\Big(\tau \frac{\phi(x,y_*(s,x)+Y(\sigma)}{\sqrt{1-|\gamma|^2}},0,s\Big)$ instead of $\Sigma_d(\tau \phi(x,y_*(s,x)+Y(\sigma),0,s))$. The factor $\sqrt{1-|\gamma|^2}$ appears naturally as critical value of the phase of \eqref{winform} (after the stationary phase in $\overline z$ after setting $\tilde z=\phi(x,\tilde y,\phi(x,\tilde y,0,s)\overline z,s)$). For $d_1=2$, $d_2=1$, \eqref{wjglformlemcyl} has been proved in \cite{IaIv23}.

\section{Proof of Theorem \ref{thmCE} for the wave equation }\label{secCE}

In this section we prove Theorem \ref{thmCE}. Recall that in Proposition \ref{propudiezh} we obtained the form of $u^{\#}_{h}(Q,Q_0,t)$ in terms of $\mathcal{F}(\partial_{x}u^+|_{\partial\Omega_{d_1,d_2}})$ and in Corollary \ref{corw0gl} we have explicitly computed $\mathcal{F}(\partial_x u^+|_{\partial\Omega_{d_1,d_2}})$ near the $\mathcal{C}^{d_1,d_2}_{Q_0}$. In the following we estimate $u^{\#}_{h,\kappa_{\e_0}}(Q,Q_0,t)$ given in \eqref{vhformnewchi} when the observation point is $Q=Q_S(s)$ is placed behind the obstacle, at the same distance $s$ as $Q_0=Q_N(s)$ from the center of the ball $B_{d_1}(0,1)\times \{0\}^{d_2}$. We aim at proving that, for $s\sim h^{-1/3}$, the maximum value of $|u^{\#}_{h,\kappa_{\e_0}}(Q_S(s),Q_0,t)|$ is reached for $t=2(\sqrt{s^2-1}+y_*(s))$, $y_*(s)=\arcsin \frac 1s$, and yields the same loss as in the exterior of a ball in $\mathbb{R}^{d_1}$, for all $d_1\geq 4$.

\begin{lemma}
For $\tau$ on the support of $\chi(h\tau)$ the following holds, modulo $O(\tau^{-\infty})$ contributions
\begin{equation}\label{Fourtransglancing}
\mathcal{F}(u^{\#}_{h,\kappa_{\e_0}})(Q_S(s),Q_N(s),\tau)=\chi(h\tau) \frac{\tau^{d-2+\frac 13-\frac{d_2}{2}}}{\sqrt{s^2-1}^{d_1-1+\frac{d_2}{2}}} e^{-i\tau(2\sqrt{s^2-1}+2y_*(s))} f_0(y_*(s)\tau^{1/3},s,\tau),
\end{equation}
where $\mathcal{F}(u^{\#}_{h,\kappa_{\e_0}})(\cdot,\tau)$ denotes the Fourier transform in time of $u^{\#}_{h,\kappa_{\e_0}}(\cdot,t)$ and where $f_0$ is an asymptotic expansion with small parameter $\tau^{-1/3}$ so that $|f_0(\delta,s,\tau)|\sim 1$ for $\delta\sim 1$. 

As a consequence, at $t=2(\sqrt{s^2-1}+\arcsin \frac 1s)$ and for $s\sim h^{-1/3}$, this yields
\begin{equation}\label{glancing}
\Big|u^{\#}_{h,\kappa_{\e_0}}(Q_S(s),Q_N(s),t)\Big| \sim \frac{1}{h^d}\Big(\frac{h}{t}\Big)^{(d-1)/2}\times h^{-\frac{d_1-3}{3}}.
\end{equation}
\end{lemma}

\begin{proof}
Using \eqref{vhformnewchi}, it follows that the Fourier transform in time of $u^{\#}_{h,\kappa_{\e_0}}$ reads, modulo $O(h^{\infty})$ terms,  
\begin{multline}\label{Fudiezzer}
\mathcal{F}(u^{\#}_{h,\kappa_{\e_0}})(Q_S(s),Q_N(s),\tau)=\chi(h\tau) \int_{P=(0,y,\omega,z)\in\mathbb{S}^{d_1-1}\times \mathbb{R}^{d_2}}\mathcal{F}(\partial_x u^+)(P,Q_0,\tau)\kappa_{\e_0}(y-y_*(s))\\
 \times \frac{\tau^{\frac{d-3}{2}}}{|Q_S(s)-P|^{\frac{d-1}{2}}}\Sigma_d(\tau|Q_S(s)-P|)e^{-i\tau|Q_S(s)-P|}d\sigma(P).
 \end{multline}
The form of $\mathcal{F}(\partial_x u^+)(P,Q_0,\tau)$ is given in Corollary \ref{corw0gl}.
Introducing \eqref{u_xomega2} in \eqref{Fudiezzer} yields
\begin{multline}\label{Fudiez}
\mathcal{F}(u^{\#}_{h,\kappa_{\e_0}})(Q_S(s),Q_N(s),\tau)
=\chi(h\tau)\left(\frac{\tau}{2\pi}\right)^{1+d_2}
\frac{\tau^{\frac{1}{3}+\frac{d_1-1}{2}}}{\sqrt{s^2-1}^{\frac{d_1-1}{2}}}\int e^{i\tau (y\alpha+<z,\gamma>-\sqrt{1-|\gamma|^2}\Gamma_0(y_*(s),\frac{\alpha}{\sqrt{1-|\gamma|^2}},s))}\\ \times \kappa_{\e_0}(y-y_*(s)) 
\kappa_{\e_1}\Big(1-\frac{\alpha}{\sqrt{1-|\gamma|^2}}\Big)\frac{i e^{-i\pi/3}b_{\partial}(y,z,\alpha,\gamma,\tau)}{A_+(\tau^{2/3}\zeta_0(\alpha,\gamma))}  f\Big(\frac{\alpha}{\sqrt{1-|\gamma|^2}},\gamma,s,\tau\Big)\\
\times \tau^{\frac{d-3}{2}}\frac{\Sigma_d(\tau\phi(0,-y,z,s))}{\phi(0,-y,z,s)^{\frac{d-1}{2}}}e^{-i\tau\phi(0,-y,z,s)} d\alpha d\gamma dy dz.
\end{multline}
with $b_{\delta}$ and $f$ elliptic and where, for $P=(0,y,\omega,z)\in \mathbb{S}^{d_1-1}\times \mathbb{R}^{d_2}$ we have replaced the distance $|Q_S(s)-P|$ by the $\phi(0,-y,z,s)$ with $\phi$ defined in \eqref{Phi}. In fact, for $P$ with $(r,\varphi,\omega,z)$ - coordinates $(1,\pi/2-y,\omega,z)$ and $Q_S(s)$ with $(r,\varphi,\omega,z)$ - coordinates $(s,\pi,\cdot,0)$, we have by \eqref{tildePhi}
\begin{multline}\label{psiphi-}
|P-Q_S(s)|=\big(|z-z_{Q_S(s)}|^2+(\cos(\pi/2-y)-s\cos(\pi))^2+\sin^2(\pi/2-y)\big)^{1/2}\\
=\sqrt{|z|^2+1+2s\sin y +s^2}=\phi(0,-y,z,s).
\end{multline}
The next lemma follows by performing a suitable change of coordinate in $y$ (and integrating in $(z,\gamma)$)
\begin{lemma}\label{lemphastatG}
There exist symbols $f_{1,2}$ such that one has, modulo $O(h^{\infty})$ terms,
\begin{multline}\label{Fudiez2chi00}
\mathcal{F}(u^{\#}_{h,\kappa_{\e_0}})(Q_S(s),Q_N(s),\tau)
=\chi(h\tau) \frac{\tau^{d-1-\frac{d_2}{2}-\frac 23}}{\sqrt{s^2-1}^{d_1-1+\frac{d_2}{2}}} \\
\times \int e^{-2i\tau\Gamma_0(y_*,\alpha(\beta,\tau),s)} \kappa_{2\e_1}(\tau^{-2/3}\beta)
\frac{( f_1A(-\beta)+\tau^{-1/3} f_2A'(-\beta))}{A_+(-\beta)} d\beta,
\end{multline}
where $f_1,f_2(\alpha,s,\tau)$ are asymptotic expansions 
with $f_1$ elliptic and where $\alpha=\alpha(\beta,\tau)$ satisfies $\tau^{2/3}\zeta_0(\alpha)=-\beta$.
 \end{lemma}
\begin{proof}
Consider first the case $d_2=0$, $d_1=d$, when \eqref{Fudiez} has a simpler form as follows
\begin{multline}\label{Fudiezball}
\mathcal{F}(u^{\#}_{h,\kappa_{\e_0}})(Q_S(s),Q_N(s),\tau)
=\chi(h\tau) \frac{\tau}{2\pi}
\frac{\tau^{\frac{1}{3}+\frac{d_1-1}{2}}}{\sqrt{s^2-1}^{\frac{d_1-1}{2}}}\int e^{i\tau (y\alpha-\Gamma_0(y_*(s),\alpha,s)-\phi(0,-y,0,s))}\\ \times \kappa_{\e_0}(y-y_*(s)) 
\kappa_{\e_1}(1-\alpha)\frac{i e^{-i\pi/3}b_{\partial}(y,\alpha,\tau)}{A_+(\tau^{2/3}\zeta_0(\alpha))}  f(\alpha,s,\tau) \tau^{\frac{d-3}{2}}\frac{\Sigma_d(\tau\phi(0,-y,0,s))}{\phi(0,-y,0,s)^{\frac{d-1}{2}}} d\alpha dy.
 \end{multline}
Notice that the part of the phase depending on $y$ equals $-\big((-y)\alpha+\phi(0,-y,0,s)\big)=-\Phi(0,-y,\alpha,s)$, with $\Phi$ defined in \eqref{phi}. Applying Lemma \ref{lemgam} with $x=0$ and $-y=y_*(s)+Y$, $y_*(s)=\arcsin(1/s)$, there exists a unique change of variables $Y\rightarrow \sigma$, smooth, such that, for $-y=y_*(s)+Y(\sigma)$,
\[
\Phi(0,-y,\alpha,s)=\frac{\sigma^3}{3}+\sigma \zeta_0(\alpha)+\Gamma_0(y_*(s),\alpha,s).
\]
As $\phi(0,y_*(s),0,s)=\sqrt{s^2-1}$ and $-y=y_*(s)+Y$ with $Y$ on the support of $\kappa_{\e_0}(-Y-2y_*(s))$, we write 
\[
\frac{\kappa_{\e_0}(y-y_*(s))}{\phi(0,-y,0,s)^{\frac{d-1}{2}}}=\frac{1}{\sqrt{s^2-1}^{\frac{d-1}{2}}}\times \kappa_{\e_0}(-Y-2y_*(s))\Big(\frac{\sqrt{s^2-1}}{\phi(0,y_*(s)+Y,0,s)}\Big)^{\frac{d-1}{2}},
\]
where $|Y|\leq 2 y_*(s)+\e_0$ on the support of $\kappa_{\e_0}$. As $s$ will be taken large enough, depending on the frequency, the term in brackets will remain close to $1$ on the support of $\kappa_{\e_0}$. 
We may then write, with $d_1=d$,
\begin{multline}\label{Fudiezballong}
\mathcal{F}(u^{\#}_{h,\kappa_{\e_0}})(Q_S(s),Q_N(s),\tau)
=\chi(h\tau) \frac{\tau}{2\pi}
\frac{\tau^{\frac{d_1-1}{2}+\frac{d-3}{2}}}{\sqrt{s^2-1}^{d_1-1}}\tau^{1/3}\int e^{-i\tau (\frac{\sigma^3}{3}+\sigma \zeta_0(\alpha)+2\Gamma_0(y_*(s),\alpha,s))}\\ \times 
\kappa_{\e_1}(1-\alpha)\frac{\Sigma_{\#}(\sigma,\alpha,s,\tau)}{A_+(\tau^{2/3}\zeta_0(\alpha))} d\sigma d\alpha,
 \end{multline}
where, on the support of $\kappa_{\e_1}$, $\Sigma_{\#}$ is elliptic near $\sigma=0$ and $\alpha=1$, given by, with $Y=Y(\sigma)$,
\begin{multline}
\Sigma_{\#}(\sigma,\alpha,s,\tau):=i e^{-i\pi/3} \kappa_{\e_0}(-Y(\sigma)-2y_*(s))b_{\partial}(-y_*(s)-Y,\alpha,\tau) f(\alpha,s,\tau)\\
\times \Sigma_d(\tau\phi(0,y_*(s)+Y,0,s))\Big(\frac{\sqrt{s^2-1}}{\phi(0,y_*(s)+Y,0,s)}\Big)^{\frac{d-1}{2}}\frac{dY}{d\sigma}.
\end{multline}
We now apply \cite[Theorem 7.7.18]{horFIO} to write the integral \eqref{Fudiezballong} as a combination of Airy functions.
\begin{thm}\label{thmhorFIO}(\cite[Theorem 7.7.18]{horFIO})
Let $\psi(\sigma,\alpha):=\frac{\sigma^3}{3}+\sigma \zeta_0(\alpha)$ be a real valued $C^{\infty}$ function near $0$ in $\R$ and $\Sigma_{\#}(\sigma,\cdot)\in C^{\infty}_0$ supported for $\sigma$ near $0$. Then there exist real valued functions $f_{i}\sim_{1/\tau}\sum_{j\geq 0} f_{i,j} \tau^{-j}$, $i\in \{1,2\}$ with $f_{i,j}\in C^{\infty}_0$ and $f_{1,0}=\Sigma_{\#}(0,\cdot)$ such that
\begin{equation}
\tau^{1/3}\int \Sigma_{\#}(\sigma, \alpha,s,\tau) e^{i\tau\psi(\sigma,\alpha)} d\sigma= f_1(\alpha,s,\tau)A(\tau^{2/3}\zeta_0(\alpha))+\tau^{-1/3}f_2(\alpha,s,\tau)A'(\tau^{2/3}\zeta_0(\alpha)) +O(\tau^{-\infty}).
\end{equation}
\end{thm}
Set $\tau^{2/3}\zeta_0(\alpha)=-\beta$: as $\zeta_0(\alpha)=\alpha^{2/3}\tilde\zeta(1/\alpha)$ and $|\tilde\zeta(\rho)|=2^{1/3}\frac{|1-\rho|}{\rho}$ for $\rho$ near $1$, we get $\alpha=\alpha(\beta,\tau)$,
\begin{equation}\label{eqtalphabet}
\alpha(\beta,\tau):=1-2^{-1/3}\beta \tau^{-2/3}+O(\beta^2\tau^{-4/3}),\quad \frac{d\alpha}{d\beta}=- \tau^{-2/3}(1- O(\tau^{-2/3})).
\end{equation}
Also, $|\beta|\leq 2^{1/3}\e_1 \tau^{2/3}$ on the support of $\kappa_{\e_1}(1-\alpha(\beta,\tau))$, so introducing $\kappa_{2\e_1}(\tau^{-2/3}\beta)$ doesn't change the integral modulo $O(\tau^{-\infty})$. This concludes the proof when $d_2=0$. When $d_2\geq 1$ we reduce the integral in \eqref{Fudiez} to \eqref{Fudiezballong} as follows : let $\alpha=\sqrt{1-|\gamma|^2}\tilde\alpha$, $z=\phi(0,-y,0,s) \overline{z}$, then the phase becomes
\[
\sqrt{1-|\gamma|^2}(y\tilde \alpha-\Gamma_0(y_*(s),\tilde\alpha,s))+\phi(0,-y,0,s) (<\overline z,\gamma>-\sqrt{1+|\overline z|^2}),
\]
with critical point $\overline z_{c}=\frac{\gamma}{\sqrt{1-|\gamma|^2}}$. The stationary phase yields a factor $\tau^{d_2-\frac{d_2}{2}}$ and the critical value is 
\begin{equation}
\sqrt{1-|\gamma|^2}\Big(y\tilde \alpha-\Gamma_0(y_*(s),\tilde\alpha,s)-\phi(0,-y,0,s))\Big)=\sqrt{1-|\gamma|^2}\Big(\frac{\tilde\sigma^3}{3}+\tilde\sigma \tilde\alpha^{2/3}\tilde\zeta(\frac{1}{\tilde\alpha}) -2\Gamma_0(y_*(s),\tilde\alpha,s)\Big).
\end{equation}
We obtain $\mathcal{F}(u^{\#}_{h,\kappa_{\e}})$ as in \eqref{Fudiezballong}, where the power of $\tau$ is $1+\frac{d_1-1}{2}+\frac{d-3}{2}+\frac{d_2}{2}+\frac 13=d-1+\frac 13$, where $\tau^{2/3}\zeta_0$ is now replaced by $(\tau\sqrt{1-|\gamma|^2})^{2/3}\zeta_0$, where the coefficient of $2i\Gamma_0$ in the phase is $\tau\sqrt{1-|\gamma|^2}$ (which is the new large parameter) and where $f_{1,2}$ depend also on $\gamma$. It remains to eliminate $\gamma$ by stationary phase. We won't do this directly as this may affect the form of the Airy combination that we want to keep. Instead, for $\tilde \tau:=\tau\sqrt{1-|\gamma|^2}$, we consider the same change of coordinates as before
\[
\tau^{2/3}\zeta_0(\alpha)=\tilde\tau^{2/3}\tilde\alpha^{2/3}\tilde\zeta(1/\tilde\alpha)=-\beta, \text{ where }  \tilde\tau:=\tau\sqrt{1-|\gamma|^2}, \alpha=\sqrt{1-|\gamma|^2}\tilde \alpha.
\]
Then $\tilde\alpha=\tilde\alpha(\beta,\tilde\tau)$ is given by \eqref{eqtalphabet}. As such, the Airy factors do not involve $\gamma$ anymore, and the phase $2i\tau\sqrt{1-|\gamma|^2}\Gamma_0(y_*(s),\tilde \alpha(\beta,\tau\sqrt{1-|\gamma|^2}),s)$ is stationary w.r.t. $\gamma$ when
\begin{equation}\label{contribgam}
\frac{\gamma}{\sqrt{1-|\gamma|^2}}\Gamma_0(y_*(s),\tilde\alpha,s)+\sqrt{1-|\gamma|^2}(\nabla_{\gamma}\tilde\alpha) \partial_{\tilde\alpha}\Gamma_0(y_*(s),\tilde\alpha,s)=0.
\end{equation}
As $\tilde\alpha=\tilde\alpha(\beta,\tau\sqrt{1-|\gamma|^2})$ depends on $\gamma$ through $\sqrt{1-|\gamma|^2}$, we obtain, using \eqref{eqtalphabet}, that
\begin{equation}
\sqrt{1-|\gamma|^2}\nabla_{\gamma}\tilde\alpha=-\frac{\gamma}{\sqrt{1-|\gamma|^2}}\big(\tilde \tau \partial_{\tilde\tau}\alpha(\beta,\tilde\tau)\big)|_{\tilde\tau=\tau\sqrt{1-|\gamma|^2}}
=-\frac{\gamma}{\sqrt{1-|\gamma|^2}}\Big(\frac{2}{3}2^{-1/3}\beta\tilde\tau^{-2/3}
+O(\beta^2\tilde\tau^{-4/3}) \Big).
\end{equation}
Using \eqref{defGamm} we also have, with $\tilde \tau=\tau\sqrt{1-|\gamma|^2}$,
\[
\partial_{\tilde\alpha}\Gamma_0(y_*(s),\tilde\alpha(\beta,\tilde \tau),s)=y_*(s)-\frac{(1-\tilde\alpha(\beta,\tilde\tau))}{2\sqrt{s^2-1}}(1+O(1-\tilde\alpha(\beta,\tilde\tau))).
\] 
Therefore, the sum of the left hand side terms in \eqref{contribgam} equals
\begin{multline}
\frac{\gamma}{\sqrt{1-|\gamma|^2}}\Big[2\sqrt{s^2-1}+2y_*(s)(1-\beta\tilde\tau ^{-2/3}-\frac 23 \beta\tilde\tau^{-2/3})+O(\beta^2\tilde\tau^{-4/3}/s)\Big]\Big|_{\tilde\tau=\tau\sqrt{1-|\gamma|^2}}\\
=\frac{\gamma}{\sqrt{1-|\gamma|^2}}\Big[2\sqrt{s^2-1}+2y_*(s)\big(1-\frac 53 \beta(\tau\sqrt{1-|\gamma|^2})^{-2/3}\big)+O(\beta^2(\tau\sqrt{1-|\gamma|^2})^{-4/3}/s)\Big],
\end{multline}
where we used $O(y_*(s);\frac{1}{\sqrt{s^2-1}})=O(\frac 1s)$ as $s$ is large and $y_*(s)=\arcsin(\frac 1s)$. 
It follows that the phase is stationary w.r.t. $\gamma$ only at $\gamma=0$. As the matrix of second order derivatives equals the identity at the critical point, its Jacobian is equal to $1$. The stationary phase w.r.t. $\gamma$ yields a factor $(\tau\sqrt{s^2-1})^{-d_2/2}$ (where the powers of $\sqrt{s^2-1}$ come from the main term of \eqref{defGamm}). The new symbols, still denoted $f_{1,2}$, are asymptotic expansions and the main contribution of $f_1$ is still $\Sigma_{\#}(0,\cdot)$, hence elliptic.
\end{proof}
  
We are left with the integration in $\beta$ in \eqref{Fudiez2chi00}. Our goal is to prove that this integral doesn't vanish and is uniformly bounded from above and from below by positive constants independent of $\tau$. We let 
$s= h^{-1/3}/\delta$ for some $\delta$ to be chosen later, then $y_*(s)=\arcsin(1/s)=1/s(1+O(1/s^2)) \sim h^{1/3}\delta$ and therefore $\tau^{1/3}y_*(s)=(h\tau)^{1/3} \delta (1+O(\delta^2h^{2/3}))$ on the support of $\chi(h\tau)$. We prove that the main contribution of \eqref{Fudiez2chi00} comes from values $\beta\sim 1$ and that this contribution doesn't vanish. Using \eqref{defGamm} and \eqref{eqtalphabet}, we have
\begin{equation}\label{phaGam2}
-2\tau \Gamma_0(y_*(s),\alpha(\beta,\tau),s)=-2\tau(y_*(s)+\sqrt{s^2-1})+2^{2/3}y_*(s)\tau^{1/3}\beta-2^{-2/3}\frac{\tau^{-1/3}}{\sqrt{s^2-1}}\beta^2(1+O(\tau^{-2/3}\beta)).
\end{equation}
As $y_*(s)=\arcsin(\frac 1s)$, the part of the phase $\Gamma_0(y_*(s),\alpha(\beta,\tau),s)$ depending on $\beta$ reads as 
\[
\delta\left(2^{2/3}(h\tau)^{1/3}\beta-2^{-2/3}h^{2/3}(h\tau)^{-1/3}\beta^2\right)+\delta^2O(h^{2/3}\beta), \quad \text{ with } h\tau\in [1/2,2] \text{ on supp }\chi(h\tau).
\]

\begin{lemma}\label{lemIdelta}
Let $\frac{1}{2h}\leq \tau\leq \frac{2}{h}$ on the support of $\chi(h\tau)$, $h\in (0,h_0)$, let $\delta>0$ and set
\[
I(\delta,\tau):=\int e^{i\delta\left(2^{2/3}(h\tau)^{1/3}\beta-2^{-2/3}h^{2/3}(h\tau)^{-1/3}\beta^2\right)}\kappa_{2\e_1}(\beta\tau^{-2/3})\frac{( f_1A(-\beta)+\tau^{-1/3} f_2A'(-\beta))}{A_+(-\beta)}d\beta.
\]
Then $I(\delta,\tau)$ is rapidly decreasing as $\delta\rightarrow \infty$. For $\delta \sim 1$, the main contribution of $\chi(h\tau)I(\delta,\tau)$ is
\begin{equation}\label{macon}
\int e^{i\delta\left(2^{2/3}(h\tau)^{1/3}\beta-2^{-2/3}h^{2/3}(h\tau)^{-1/3}\beta^2\right)} \kappa_{1}(\beta/8)\frac{( f_1A(-\beta)+\tau^{-1/3} f_2 A'(-\beta))}{A_+(-\beta)}d\beta,
\end{equation}
which is the Fourier transform of a smooth and compactly supported function at $\delta 2^{2/3}(h\tau)^{1/3}\sim 1$.

Moreover, for $\delta\sim 1$, and any $\epsilon>0$, we have
\[
\int e^{i\delta\left(2^{2/3}(h\tau)^{1/3}\beta-2^{-2/3}h^{2/3}(h\tau)^{-1/3}\beta^2\right)}(1-\kappa_1(\beta\tau^{-\epsilon}))\kappa_{2\e_1}(\beta\tau^{-2/3})\frac{(f_1A(-\beta)+\tau^{-1/3} f_2A'(-\beta))}{A_+(-\beta)}d\beta=O(h^{\infty}),
\]
while for $\chi_1\in C^{\infty}_0((1/4,4))$, equal to $1$ near $1$ we have, for all $4\leq 2^{2j}\lesssim h^{-\epsilon}$ and for all $N\geq 1$,
\[
\int e^{i\delta\left(2^{2/3}(h\tau)^{1/3}\beta-2^{-2/3}h^{2/3}(h\tau)^{-1/3}\beta^2\right)}\chi_1(\beta 2^{-2j})\frac{( f_1A(-\beta)+\tau^{-1/3} f_2A'(-\beta))}{A_+(-\beta)}d\beta=O(2^{-jN}).
\]
On the other hand, for $\delta\ll 1$, the main contribution of $I(\delta,\tau)$ comes from values $|\beta|\lesssim 1/\delta$ and $|I(\delta,\tau)|\lesssim 1/\delta$.
\end{lemma}

\begin{proof}
If $|\beta|$ is large and $\beta<-2$, then quotient $\frac{A(-\beta)}{A_+(-\beta)}\sim_{1/|\beta|} e^{-\frac 43(-\beta)^{3/2}}(1+O(1/|\beta|^{3/2}))$ is exponentially decreasing with $(-\beta)$. If $\beta>2$, this quotient has oscillatory behavior (see Lemma \ref{lem:Phi+}) and as $A(-\beta)=e^{i\pi/3}A_+(-\beta)+e^{-i\pi/3}A_-(-\beta)$, then $\frac{A(-\beta)}{A_+(-\beta)}=e^{i\pi/3}+e^{-i\pi/3}e^{+\frac 43i\mu(\beta)}$ with $\mu$ given in \eqref{eq:Phi+}. 
Let $\delta \sim 1$ and $\beta=2^{2j}\tilde\beta<2\e_1\tau^{2/3}$ with $\tilde \beta\in [1/4,4]$. The critical points of the phase  of $I(\delta,\tau)$ satisfy 
\begin{equation}\label{phastatdelt}
\delta 2^{2j}\left(2^{2/3}(h\tau)^{1/3}-2^{1/3}h^{2/3}(h\tau)^{-1/3}2^{2j}\beta\right)\mp2^{3j}\sqrt{\beta}+2^{3j}\sqrt{\beta}=0.
\end{equation}
As $h^{2/3}2^{2j}<4\e_1$, taking $\e_1$ smaller if necessary, it follows that, if $\delta 2^{2j}>\tau^{\epsilon}$ for some $\epsilon>0$, the contribution of the integral for $\tilde\beta\in [1/2,2]$ is $O(\tau^{-\infty})$, while for $2^{2j}\leq \tau^{\epsilon}$ such that $\delta 2^{2j-1}\geq 4$, this contribution is $O(2^{-2jN})$ after $N\geq 1$ integrations by parts.
Let $\delta\ll 1$, then the phase is non-stationary for values $\beta\gtrsim 2^{2j}$ satisfying $\delta 2^{2j}\gg 1$, which proves the Lemma.
\end{proof}
We now proceed with the proof of \eqref{Fourtransglancing}.
From Lemma \ref{lemIdelta} it follows that for bounded values of $\delta$ the main contribution of $\chi(h\tau)I(\delta,\tau)$ is \eqref{macon}, which takes non vanishing values for $\delta$ near $1/2$, hence the integral in the second line of \eqref{Fudiez2chi00} reads as
$
e^{-i\tau(2\sqrt{s^2-1}+2y_*(s))} f_0(y_*(s)\tau^{1/3},s,\tau)$, where, for $y_*(s)\tau^{1/3}\sim \delta\sim 1$, the function $f_0$ doesn't vanish. At $t=2\sqrt{s^2-1}+2y_*(s)$ 
the integral in $\tau$ defining $u^{\#}_{h,\e_0}(Q_S(s),Q_N(s),t)$ satisfies
\begin{multline}\label{calculfin}
\Big|u^{\#}_{h,\kappa_{\e_0}}(Q_S(s),Q_N(s),t)\Big|=\Big| \int e^{it\tau} \chi(h\tau)\mathcal{F}(u^{\#}_{h,\kappa_{\e_0}})(Q_S(s),Q_N(s),\tau)d\tau\Big|\\
=\frac{1}{\sqrt{s^2-1}^{d_1-1+\frac{d_2}{2}}}\Big|\int e^{i\tau(t-2\sqrt{s^2-1}-2y_*(s))}\chi(h\tau)\tau^{d-2+\frac 13-\frac{d_2}{2}}f_0(y_*(s)\tau^{1/3},s,\tau)d\tau\Big|\\
\sim \frac{1}{\sqrt{s^2-1}^{\frac{d_1+d_2-1}{2}+\frac{d_1-1}{2}}}\frac{1}{h^{1+d-2+\frac 13-\frac{d_2}{2}}}\sim \frac{1}{h^d}\times\Big(\frac ht\Big)^{\frac{d-1}{2}}\times\frac{h^{-\frac{d-1}{2}+\frac 23+\frac{d_2}{2}}}{s^\frac{d_1-1}{2}}\\
\sim  \frac{1}{h^d}\times\Big(\frac ht\Big)^{\frac{d-1}{2}}\times h^{-\frac{d-1}{2}+\frac 23+\frac{d_2}{2}+\frac{d_1-1}{6}},
 \end{multline}
which allows to conclude as $-\frac{d-1}{2}+\frac 23+\frac{d_2}{2}+\frac{d_1-1}{6}=-\frac{d_1-3}{3}$. 
For large $\delta\gg 1$, $I(\delta,\tau)$ decreases rapidly by Lemma \ref{lemIdelta} while for $\delta\ll 1$, i.e. for larger values of $s =h^{-1/3}/\delta \gg h^{-1/3}$, $|f_0(y_*(s)\tau^{1/3},s,\tau)|\lesssim 1/\delta$ and the last factor in the third line of \eqref{calculfin} becomes smaller with $\delta$ for  $d_1\geq 3$ as it is bounded by $(1/\delta)\times h^{-\frac{d_1-1}{3}}\delta^{\frac{d_1-1}{2}}= h^{-\frac{d_1-1}{3}}\delta^{\frac{d_1-3}{2}}\ll h^{-\frac{d_1-3}{3}}$. Therefore, for $s\sim h^{-1/3}\sim \tau^{1/3}$, the loss is sharp.
\end{proof}

\section{Proof of Theorem \ref{thmdisp3D} for the wave equation}\label{secdispext3D}
 In this section we provide a proof of Theorem \ref{thmdisp3D}, in the exterior of a ball in $\R^3$. We deal separately with the high frequency regime, and the low frequency one. In fact, to investigate exterior problems one needs to take into account the magnitude of $\tau |\Theta|$, where $|\Theta|$ denotes the size of the obstacle. More specifically, the set of $\tau$ such that $\tau |\Theta|\lesssim 1$ is the low frequency regime (which, in scattering problems divides into the Rayleigh region $\tau|\Theta|\ll 1$ and the resonance region $\tau |\Theta|\sim1$) 
 and the set of values $\tau$ such that $\tau|\Theta|\gg 1$ is the high frequency regime. Mathematical methods used to study scattering phenomena in the Rayleigh or resonance region differ sharply from those used in the high frequency regime.\\

In sections \ref{secffb}, \ref{secofoc}, \ref{secctb} we deal with the high-frequency regime when $\tau$ is large, which is by far the most intricate situation. The low frequency regime is dealt with in section \ref{secHelm}, using classical results for the exterior Dirichlet problem for Helmholtz equation. In section \ref{secfsop} below, we first show that we may reduce the analysis to $t\sim d(Q_0,\partial\Omega)+d(Q,\partial\Omega)$, where $Q_0$ and $Q$ are the source and the observation points, respectively. 

\subsection{Finite speed of propagation. Let $d=3$, $\Omega=\mathbb{R}^3\setminus \Theta$, $\Theta$ non-trappping (here $\Theta=B_3(0,1)$)}\label{secfsop}

\begin{dfn}\label{dfnnontrap}
A domain $\Omega$ is said to be non-trapping if for some $R>0$ such that $|P|<R$ for every $P\in\partial\Omega$ there exists $T_R$ such that no generalized geodesic of length $T_R$ lies completely within the ball $B_R=\{Q\in\Omega | |Q|\leq R\}$. $T_R$ is called escape time.
\end{dfn}
\begin{thm}\label{thmmel79}(\cite{mel79})
Let $\Omega$ be a non-trapping domain in $\mathbb{R}^d$ with $d\geq 3$ odd, let $\Delta_\Omega$ denote the Dirichlet Laplace operator on $\Omega$ and let $R$, $T_R$ as in Definition \ref{dfnnontrap}. Then there exists a sequence $\lambda_j\in\mathbb{C}$, $\Im (\lambda_j)<0$, $(\Im(\lambda_j))_{j\rightarrow\infty}\searrow -\infty$ and associated generalized eigenspaces $V_j\subset C^{\infty}(\Omega)$ of dimensions $m_j<\infty$ so that
\begin{enumerate}
\item $v\in V_j\Rightarrow v|_{\partial\Omega}=0$;\quad  $(\Delta_\Omega+\lambda^2_j)V_j\subset V_j$, $(\Delta_\Omega+\lambda^2_j)^{m_j-1}V_j=\{0\}$;
\item if $(U_0,U_1)$ are supported in $\{\vert Q\vert \leq R\}$, there exists $v_{j,k}\in V_j$,  such that for all $\epsilon>0$, $N\in\mathbb{N}$ and multi-index $\alpha$ and for some constant $C=C(R,N,\epsilon,\alpha)$, the solution $U$ to \eqref{WE} with Dirichlet condition on $\partial\Omega$ and initial data $(U_0,U_1)$ satisfies for $t>T_R$:
\[
\sup _{\{|Q|\leq R\}}\Big|D^{\alpha}_{(t,Q)}\Big[U(Q,t)-\sum_{j=1}^{N-1} e^{-i\lambda_j t}\sum_{k=0}^{m_j-1}t^kv_{j,k}(Q)\Big]\Big|\leq C e^{-(t-T_R)(|\Im \lambda_N|-\epsilon)}.
\]
\end{enumerate}
\end{thm}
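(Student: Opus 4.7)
The plan is to reduce the theorem to a complex-analytic contour-deformation argument applied to the cut-off resolvent of the Dirichlet Laplacian. Write the solution to \eqref{WE} via the spectral calculus
\[
U(t)=\cos(t\sqrt{-\Delta_D})U_0+\frac{\sin(t\sqrt{-\Delta_D})}{\sqrt{-\Delta_D}}U_1,
\]
and express both functions of $\sqrt{-\Delta_D}$ as contour integrals of the resolvent $R_D(\lambda):=(-\Delta_D-\lambda^2)^{-1}$ along a horizontal line $\{\Im\lambda=\varepsilon_0\}$ in the upper half-plane, where $R_D$ is holomorphic as a bounded operator on $L^2(\Omega)$. Since the initial data are supported in $\{|Q|\leq R\}$ and the conclusion only concerns the solution on the same ball, one may sandwich $R_D(\lambda)$ by a cut-off $\chi\in C^\infty_0(\mathbb{R}^d)$ equal to $1$ on $B_R$ and vanishing outside a slightly larger ball; finite speed of propagation handles the rest.

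Two analytic inputs then drive the argument. First, the cut-off resolvent $\chi R_D(\lambda)\chi$ admits a \emph{meromorphic continuation} from $\{\Im\lambda>0\}$ to all of $\mathbb{C}$. In odd dimension this is classical (Lax--Phillips, Vainberg): one uses that the free outgoing resolvent in $\mathbb{R}^d$ has an integral kernel that is entire in $\lambda$ for $d$ odd, writes $R_D$ via a Dirichlet single-layer ansatz, and invokes analytic Fredholm theory on a compact perturbation of the identity. The poles of $\chi R_D(\lambda)\chi$ all lie in $\{\Im\lambda<0\}$ by self-adjointness; they are the scattering resonances $\lambda_j$. The finite-dimensional generalized eigenspaces $V_j$ and the integers $m_j$ are then read off the Laurent expansion of $R_D$ at $\lambda_j$: the principal part has the form $\sum_{k=1}^{m_j} A_{j,k}/(\lambda-\lambda_j)^k$ with $A_{j,k}$ finite-rank operators whose ranges lie in $V_j$, and the identity $(-\Delta-\lambda^2)R_D(\lambda)=\mathrm{Id}$ expanded near $\lambda=\lambda_j$ yields both $v|_{\partial\Omega}=0$ (from the Dirichlet condition built into $R_D$) and $(\Delta_D+\lambda_j^2)^{m_j}V_j=\{0\}$. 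The contour deformation now proceeds as follows: starting from $\{\Im\lambda=\varepsilon_0\}$, push the contour down to $\{\Im\lambda=-(|\Im\lambda_N|-\varepsilon)\}$; each pole $\lambda_j$ crossed yields a residue of the form $e^{-i\lambda_jt}\sum_{k=0}^{m_j-1}t^kv_{j,k}(Q)$, the polynomial factors $t^k$ arising from the Taylor expansion of $e^{-i\lambda t}$ against the Laurent coefficients of $R_D$. Extra derivatives $D^\alpha_{(t,Q)}$ cost only polynomial factors of $\lambda$.

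The second, and principal, input is a \emph{polynomial bound} on the cut-off resolvent in horizontal strips, uniformly away from the resonances:
\[
\|\chi R_D(\lambda)\chi\|_{L^2\to L^2}\leq C\langle\lambda\rangle^N,\qquad -M\leq\Im\lambda\leq\varepsilon_0.
\]
This is where the non-trapping hypothesis, together with the escape time $T_R$, enters quantitatively, and it is the main obstacle of the proof. The strategy is Vainberg's: use a parametrix that solves the exterior Helmholtz problem modulo a small error by propagating the compactly supported data along generalized bicharacteristics, observing that under the non-trapping condition every bicharacteristic leaves $B_R$ in time at most $T_R$, so the error is controllable by propagation-of-singularities arguments \`a la Melrose--Sj\"ostrand together with Morawetz-type positive-commutator estimates. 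Combining this polynomial bound with the exponential factor $|e^{-i\lambda t}|=e^{(\Im\lambda)t}$ on $\{\Im\lambda=-(|\Im\lambda_N|-\varepsilon)\}$ gives the claimed remainder $Ce^{-(t-T_R)(|\Im\lambda_N|-\varepsilon)}$; the shift by $T_R$ in the exponent encodes precisely the time needed for the initial data to evacuate $B_R$ before the meromorphic expansion takes over. Aside from this resolvent estimate, the other delicate point is organizing the residue computation when some $m_j>1$, which only requires careful but routine bookkeeping of the Laurent data to identify the $v_{j,k}\in V_j$.
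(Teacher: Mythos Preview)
The paper does not prove this theorem at all: it is quoted verbatim as a result of Melrose \cite[Theorem~1.6]{mel79} and then \emph{applied} (in the discussion leading to Lemma~\ref{lemtimelocaliz}) to localize the time interval on which dispersive bounds must be established. There is therefore no ``paper's own proof'' to compare your proposal against.

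That said, your outline is a reasonable sketch of the standard Lax--Phillips/Vainberg mechanism behind such resonance expansions: meromorphic continuation of the cut-off resolvent in odd dimension, contour deformation of a spectral representation of the propagator, identification of the residues with the resonance terms, and a polynomial bound on $\chi R_D(\lambda)\chi$ in strips supplied by the non-trapping hypothesis via propagation of singularities. One caveat: the statement you are trying to prove asserts that only \emph{finitely many} resonances lie in any horizontal strip (implicit in the clause $(\Im\lambda_j)\searrow -\infty$), and this requires more than you indicate---one needs a pole-free region of the form $\Im\lambda\geq -C\log|\Re\lambda|$ (or at least a quantitative high-frequency resolvent bound ruling out accumulation of poles at infinity in a strip), which in Melrose's paper comes from the parametrix construction together with the non-trapping geometry. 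Your sketch gestures at this but does not isolate it as the key technical step it is.
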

Let $u$ solve \eqref{WEOC} with data $(u_0,u_1)=(\delta_{Q_0},0)$ for some $Q_0\in \Omega$. Then, according to \eqref{Gbarform}, the extension $\underline{u}$ defined in \eqref{defUbar} has the form $\underline{u}|_{t>0}=u^+_{free}|_{t>0}-u^{\#}(Q,Q_0,t)$, where $u^{\#}(Q,Q_0,t):=\square^{-1}_{+}\Big((\partial_{x} u^+)_{\partial\Omega}\Big) (Q,Q_0,t)$ is defined in \eqref{Gbarform} (with $d=d_1=3$). The integral in \eqref{vhform} is supported in $t\geq \text{dist}(Q,\partial\Omega)+\text{dist}(Q_0,\partial\Omega)$ and by Huygens principle in odd dimension, one has $u^{\#}(Q,Q_0,t)\vert_{\partial\Omega}=0$ for 
$t>\text{dist}(Q_0,\partial\Omega)+\text{diam}(\Theta)$. We define $T_0=\text{dist}(Q_0,\partial\Omega)+\text{diam} (\Theta)+1$. 
One has 
\begin{equation}\label{supportUU0}
\text{support}(u^{\#}(\cdot,Q_0,T_0))\cup \text{support}(\partial_t u^{\#}(\cdot,Q_0,T_0))\subset \{Q, \text{dist}(Q,\partial\Omega)\leq \text{diam}(\Theta)+1\}.
\end{equation}
Let $R>0$ such that $\{Q, \text{dist}(Q,\partial\Omega)\leq \text{diam}(\Theta)+1\}\subset B_3(0,R)$. For $t\geq T_0$, one has 
$\partial_{x} u^+(.,t)\vert_{\partial\Omega}= \partial_{x} u^{\#}(.,t)\vert_{\partial\Omega}$ and $u^{\#}(Q,Q_0,t)|_{\partial\Omega}=0$.
We apply Theorem \ref{thmmel79} to $u^{\#}(Q,Q_0,t)$ whose initial data at time $T_0$ satisfies \eqref{supportUU0} : as such, for every multi-index $\alpha$, there exists $C_{\alpha}$ such that 
\[
\sup_{P\in \partial\Omega}|\partial ^{\alpha}_{(t,Q)}u^+(Q,Q_0,t)|\Big|_{Q=P}\lesssim e^{-C_{\alpha}(t-(T_R+T_0))},\quad \forall t>T_R+T_0.
\]
It follows that $\partial_{x} u^+(P,Q_0,t)$ is smooth for 
$t>T_R+T_0$ and there exists a constant $C>0$ such that
\[
\sup_{P\in \partial\Omega}|\partial_{x} u^+(P,Q_0,t)|\lesssim e^{-C(t-(T_R+T_0))},\quad \forall t>T_R+T_0.
\]
This shows that, in order to prove Theorem \ref{thmdisp3D} for the wave equation, it is sufficient to prove fixed time bounds for $t$ such that %
$t-|Q-P|\leq T_0+T_R=\text{dist}(Q_0,\partial\Omega)+\text{diam}(\Theta)+T_R+1$. Here, for $R$ as above,
the escape time $T_R$ satisfies $T_R\leq 2R+\text{diam}(\Theta)$.
By finite speed of propagation of the wave flow, %
we must have $t-|Q-P|\geq \text{dist}(Q_0,\partial\Omega)$, as otherwise the contribution of $\partial_{x} u^+(P,Q_0,t-|Q-P|)\Big|_{P\in \partial\Omega}$ is trivial. We have obtained the following :
\begin{lemma}\label{lemtimelocaliz}
There exists $C_0=C(\Omega)>0$ independent of $Q,Q_0$, such that in order to prove dispersive bounds for 
the solution to \eqref{WEOC} (with $d=3$), it is enough to consider only $t$ such that 
\begin{equation}\label{restrictionQ}
d(Q_0,\partial\Omega)+d(Q,\partial\Omega)\leq t\leq d(Q_0,\partial\Omega)+d(Q,\partial\Omega)+C_0.
\end{equation}
\end{lemma}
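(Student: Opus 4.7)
The plan is to exploit the decomposition \eqref{Gbarform}, namely $\underline{U}|_{t>0} = U^+_{free} - U^{\#}$, and show that each piece contributes only within the claimed time window. Since $U^+_{free}$ is the free-space propagator, its contribution to dispersion is governed directly by \eqref{disprd} in $\mathbb{R}^3$, and, more importantly, by strong Huygens' principle in three dimensions it is supported on $|t| = |Q - Q_0|$, so it is trivially localized. Hence the entire argument reduces to controlling the reflected term $U^{\#}(Q,Q_0,t) = \frac{1}{4\pi}\int_{\partial\Omega}\frac{\partial_\nu U^+(P,Q_0,t-|Q-P|)}{|Q-P|}\,d\sigma(P)$ from \eqref{Greflectedform}.

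For $U^{\#}$ I would proceed in two steps. First, I would establish the lower bound $t \geq d(Q_0,\partial\Omega) + d(Q,\partial\Omega)$ using finite speed of propagation: the integrand involves $\partial_\nu U^+(P, Q_0, t - |Q-P|)$ which vanishes unless $t - |Q-P| \geq d(Q_0, \partial\Omega)$ (the time the wave needs to travel from $Q_0$ to hit $\partial\Omega$), and moreover $|Q-P| \geq d(Q,\partial\Omega)$ for any $P \in \partial\Omega$. Combining the two inequalities gives $t \geq d(Q_0,\partial\Omega) + d(Q,\partial\Omega)$, below which $U^{\#}$ vanishes identically.

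The real content is the upper bound, for which I would invoke Melrose's Theorem~\ref{thmmel79}. Setting $T_0 := d(Q_0,\partial\Omega) + \mathrm{diam}(\Theta) + 1$, strong Huygens' principle in odd dimension guarantees that $U^{\#}(\cdot,Q_0,T_0)$ and $\partial_t U^{\#}(\cdot,Q_0,T_0)$ are supported in $\{d(Q,\partial\Omega) \leq \mathrm{diam}(\Theta) + 1\}$, as recorded in \eqref{supportUU0}. Fixing $R$ large enough to contain this support, the escape time $T_R$ depends only on $\Theta$ (one may take $T_R \leq 2R + \mathrm{diam}(\Theta)$). Applying Theorem~\ref{thmmel79} to $U^{\#}$ with initial data posed at time $T_0$, the resonance expansion yields $\sup_{P \in \partial\Omega} |\partial_\nu U^+(P,Q_0,t)| \lesssim e^{-c(t - T_0 - T_R)}$ for all $t > T_0 + T_R$, with $c > 0$ controlled by $|\Im \lambda_1|$.

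Combining these two bounds completes the reduction. If $t - |Q-P| > T_0 + T_R$ for every $P \in \partial\Omega$, which holds as soon as $t > d(Q_0,\partial\Omega) + d(Q,\partial\Omega) + C_0$ with $C_0 := \mathrm{diam}(\Theta) + T_R + 1 = C(\Theta)$, the reflected contribution $U^{\#}(Q,Q_0,t)$ is exponentially small and can be absorbed into the dispersion estimate. The only real obstruction in this argument is verifying that the constants from Melrose's expansion depend only on $\Theta$ and not on $Q_0$; this is clear because the resonances $\lambda_j$ and the escape time $T_R$ are intrinsic to $\Omega$, and the only $Q_0$-dependence enters through $T_0$, which has already been absorbed into the shift by $d(Q_0,\partial\Omega)$ on both sides of the inequality \eqref{restrictionQ}.
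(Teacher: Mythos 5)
Your proposal is correct and reproduces the paper's argument almost step for step: the paper likewise discards the free wave (which trivially satisfies dispersion), obtains the lower bound on $t$ from finite speed of propagation applied to the integrand of \eqref{Greflectedform}, and obtains the upper bound by applying Melrose's Theorem~\ref{thmmel79} to $U^{\#}$ with initial data posed at $T_0 = d(Q_0,\partial\Omega)+\mathrm{diam}(\Theta)+1$, using the support statement \eqref{supportUU0} to fix the ball $B_3(0,R)$ and hence $T_R$. The only quibble is a harmless bookkeeping slip in your value of $C_0$: since one needs $t-|Q-P|>T_0+T_R$ for \emph{all} $P\in\partial\Omega$ and $|Q-P|$ can be as large as $d(Q,\partial\Omega)+\mathrm{diam}(\Theta)$, the correct constant is $C_0=2\,\mathrm{diam}(\Theta)+T_R+1$ rather than $\mathrm{diam}(\Theta)+T_R+1$; this does not affect the lemma since $C_0$ only needs to depend on $\Theta$.
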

\begin{rmq}
As a consequence of Lemma \ref{lemtimelocaliz}, if $|t|$ is large, the source and observation points $Q_0$ and $Q$ cannot be very close to $\partial\Omega$ at the same time. Hence, if $Q_0$ is very close to $\partial\Omega$, i.e. if $\text{dist}(Q_0,\partial\Omega)\ll 1$, by symmetry of the Green function we may replace $Q_0$ by $Q$ : we are therefore left to consider only data $Q_0$ such that $\text{dist}(Q_0,\partial\Omega)>c>0$ for some constant $c$, in which case the Melrose and Taylor parametrix holds. 
\end{rmq}
Due to the rotational symmetry of the ball $B_3(0,1)$, we can assume that the source point is of the form $Q_0=Q_N(s)$, for some $s>1$. 
We consider separately the cases :
\begin{itemize}
\item when the source and observation points are outside a fixed neighborhood of the boundary (section \ref{secffb}), we use the parametrix obtained in Section \ref{secgeneral}, for which we obtain dispersive bounds;

\item when the source is far and the observation point is close to the boundary (section \ref{secofoc}), we directly estimate the Melrose -Taylor parametrix which gives the form of the solution near a glancing point;
\item when both the source and the observation points are close to the boundary (section \ref{secctb}) we obtain a parametrix in terms of spherical harmonics and then proceed with the dispersive bounds.
\end{itemize}

\subsection{The case $dist(Q_0,B_3(0,1))\geq  dist(Q,B_3(0,1))\geq c>0$ for some fixed $c>0$}\label{secffb}
In this section we use all the previous notations and results, but in the case $d_1=d=3$ and $d_2=0$, so all the terms depending on $z,\gamma$ will be removed. The incoming wave is $w_{in}$ as in \eqref{winform} with $\phi(x,\tilde y, 0,s)$ instead of $\phi(x,\tilde y,\tilde z,s)$ (without the $d\tilde z$, $d\gamma$ integration). Proposition \ref{propudiezh} yields the form of $u^{\#}_h(Q,Q_0,t)$ where $\partial\Omega=\mathbb{S}^{2}$ in \eqref{vhform} and $\Sigma_3=\frac{i}{4\pi}$. We split the integral \eqref{vhform} by introducing smooth cut-offs $\kappa_{\e_0}(y-y_*(s))$ and $1-\kappa_{\e_0}(y-y_*(s))$ supported for $|y-y_*(s)|\leq \e_0$ and $|y-y_*(s)|\geq \e_0/2$, where $\e_0>0$ is a small parameter as in Definition \ref{dfnkappa} and obtain two contributions as in \eqref{vhformnewchi}, still denoted $u^{\#}_{\kappa_{\e_0}}$ and $u^{\#}_{1-\kappa_{\e_0}}$. Lemma \ref{lemtrans} holds for $u^{\#}_{1-\kappa_{\e_0}}(Q,Q_0,t)$ and $d=3$, for all $Q$ and yield usual dispersive bounds (corresponding to transverse waves). 
We are left with $u^{\#}_{\kappa_{\e_0}}(Q,Q_0,t)$ whose Fourier transform in time reads as in \eqref{Fudiezzer}, with $Q$ instead of $Q_S(s)$ and for $d=3$, $d_2=0$, as follows
\begin{equation}\label{Fudiezzerd3}
\mathcal{F}(u^{\#}_{h,\kappa_{\e_0}})(Q,Q_N(s),\tau) =\frac{i}{4\pi} \chi(h\tau)\int_{P=(1,y,\omega)\in\mathbb{S}^{2}}\mathcal{F}(\partial_x u^+)(y,Q_0,\tau)\frac{\kappa_{\e_0}(y-y_*(s))}{|Q-P|}e^{-i\tau|Q-P|}dy d\omega.
\end{equation}
For $Q\in \mathbb{R}^3\setminus B_3(0,1)$ with coordinates $(r,y_Q,\omega_Q)$ and for a boundary point $P=(1,y,\omega)$, we have  
\begin{equation}\label{defpsi}
|P-Q|=\Big(1+r^2-2r\cos y\cos y_Q\cos(\omega-\omega_Q)-2r\sin y\sin y_Q\Big)^{1/2}:=\psi(y,\omega,Q).
\end{equation}
\begin{prop}\label{propI0}
Let $I_{\kappa_{\e_0}}(Q,Q_0,\tau):=\mathcal{F}(u^{\#}_{\kappa_{\e_0}})(Q,Q_0,\tau)/\tau$, then there exists $C=C(\e_0,c)$ such that for all $Q,Q_0$ with $\text{dist}(Q_0,\partial\Omega)\geq \text{dist}(Q,\partial\Omega)\geq c$ and for $t$ as in Lemma \ref{lemtimelocaliz}, the following holds
\[
\Big|\int e^{it\tau}\chi(h\tau)\tau I_{\kappa_{\e_0}}(Q,Q_0,\tau)d\tau\Big|\leq \frac{C(\e_0,c)}{h^2 t}.
\]
\end{prop}
Theorem \ref{thmdisp3D} follows immediately from Proposition \ref{propI0} when $Q_0,Q$ stay outside a small neighborhood of the boundary. In the remaining of this section we prove Proposition \ref{propI0}. We are reduced to showing that there exists a uniform constant $C(\e_0,c)>0$ (independent of $Q,Q_0,t$), such that for $\tau>1$ large enough and for all $t\in [T_*(Q,Q_0),T_*(Q,Q_0)+C_0]$ with $T_*(Q,Q_0):=\text{dist}(Q,B_3(0,1))+\text{dist}(Q_0,B_3(0,1))$ and $C_0$ depending only on the diameter of the ball $B_3(0,1)$, the following holds:
\begin{equation}\label{boundI0horsOS}
|I_{\kappa_{\e_0}}(Q,Q_0,\tau)|\leq \frac{C(\e_0,c)}{t}\quad \text{for  } t\in [T_*(Q,Q_0),T_*(Q,Q_0)+C_0].
\end{equation}
As $\mathcal{F}(\partial_x u^+)(P,Q_0,\tau)$ is independent of $\omega$ by construction, the part of the phase of $I_0$ that may depend on this parameter is $\psi$ defined in \eqref{defpsi}. When $\cos y_Q\lesssim \tau^{-1+\epsilon}$ for some small $\epsilon>1$, this dependence is very weak and we cannot take advantage of the stationary phase in $\omega$. On the other hand, when $\cos y_Q\geq \tau^{-1+\epsilon}$, i.e. when $Q$ is located outside a cone of aperture $\tau^{-1+\epsilon}$ around the $OS$ (or $ON$) axis, we can immediately eliminate $\omega$ by stationary phase. We distinguish two situations :\\\

\paragraph{\bf $Q$ is outside a conic neighborhood of the $OS$ axis of aperture $\tau^{-1+\epsilon}$ with small $\epsilon>0$}\label{secoutside}
Let $\tau^{\epsilon}\leq \tau\cos y_Q$ for some $\epsilon\in (0,1/6)$, which corresponds to $y_Q+\frac{\pi}{2}\gtrsim \tau^{-1+\epsilon}$. The phase $\psi$ has critical points $\{\omega_Q, \omega_Q+\pi\}$. At $\omega_Q+\pi$, the phase of $I_{\e_0}$ is not stationary in $y$ for $\alpha$ near $1$ as $-\partial_y\psi(y,\omega_Q+\pi,Q)\sim 1$ and we obtain a $O(\tau^{-\infty}/t)$ contribution. As we have
\begin{equation}\label{secderivomomtildepsi}
\partial^2_{\omega}\psi(y,\omega,Q)|_{\omega=\omega_Q}= \frac{r\cos y\cos y_Q}{\psi(y,\omega_Q,Q)},\quad \tau\cos(y_Q)\geq \tau^{\epsilon}
\end{equation}
and as $\cos y=\cos(y_*(s)+(y-y_*(s)))= \cos(\arcsin(\frac 1s))+O(\e_0)= \frac{\sqrt{s^2-1}}{s}+O(\e_0)$ remains bounded from below by a fixed constant on the support of the symbol for $|y-y_*(s)|\leq \e_0$ small enough, it follows that the usual stationary phase applies and yields, modulo $O(\tau^{-\infty})$ contributions 
\begin{equation}\label{I0tosplit}
I_{\kappa_{\e_0}}(Q,Q_0,\tau)=\frac{i}{4\pi \tau} \int 
\frac{\kappa_{\e_0}(y-y_*(s))\Sigma_0}{\sqrt{\cos y (\tau \cos y_Q)}}\frac{e^{-i\tau \psi(y,\omega_Q,Q)}}{\sqrt{r\psi(y,
\omega_Q,Q)}} \mathcal{F}(\partial_x u^+)(y,Q_0,\tau) dy,
\end{equation}
where $\psi(y,\omega_Q,Q)=(1+r^2-2r\cos(y_Q-y))^{1/2}=\phi(0,\frac{\pi}{2}+y_Q-y,0,r)$ and $\Sigma_0(y,(\tau \cos(y_Q))^{-1})$ is an asymptotic expansion with main contribution $\sim 1$. 
As $r>1$, this phase has a degenerate critical point when  $\cos (y-y_Q)=\frac 1r$ and $\partial_y\psi(y,\omega_Q,Q)|_{\cos(y-y_Q)=\frac 1r}=1$, $\partial^3_y\psi(y,\omega_Q,Q)|_{\cos(y-y_Q)=\frac 1r}=-1$.
 \begin{rmq}
The integral \eqref{I0tosplit} is taken over $P=(0,y,\omega_Q)$ for $y$ near $y_*(s)=\arcsin\frac 1s$, hence on a small path of the great circle going through $(0,y_*(s),\omega_Q)$. The ray $P Q$ is tangent to the boundary $\mathbb{S}^2$ at $(y,\omega_Q)$ if and only if $\cos(y_Q-y)=\frac 1r$, hence at $\frac{\pi}{2}+y_Q-y=y_*(r)=\arcsin(1/r)$, while the ray $Q_0P$ is tangent to the boundary $\mathbb{S}^2$ when $\cos(y-y_{Q_0})=\frac 1s$, hence at $\frac{\pi}{2}+y-y_{Q_0}=y_*(s)=\arcsin(1/s)$. As $y_{Q_0}=\frac{\pi}{2}$ and $y<\frac{\pi}{2}$, this means at $y=y_*(s)$. It follows that that the ray $Q_0Q$ is tangent to $\mathbb{S}^2$ when $\cos(y_*(s)-y_Q)=\frac 1r$ i.e. when $y_*(s)=y_Q+\arccos(1/r)=(y_Q+\frac{\pi}{2})-\arcsin\frac 1r=(y_Q+\frac{\pi}{2})-y_*(r)=:y_c$. 
\end{rmq}
The phase of $\mathcal{F}(\partial_x u^+)(P,Q_0,\tau)$ from \eqref{u_xomega23} is linear in $y$, and the phase function of \eqref{I0tosplit} is stationary when $\alpha=\partial_y\psi(y,\omega_Q,Q)$. For $Q$ such that $|y_Q+\arccos \frac 1r-y_*(s)|=|y_c-y_*(s)|\geq 2\e_0$ and $y$ in the support of $\kappa_{\e_0}(y-y_*(s))$ we have $|\partial^2\psi(y,\cdot)|=|y-y_c|(1+O(|y-y_c|))\gtrsim |y_c-y_*(s)|-|y-y_*(s)|\geq 2\e_0-\e_0>\e_0$ and the usual stationary phase applies in $y$ yielding a contribution better than the one at $y_c$. Hence we consider $Q$ with $y_Q+\arccos \frac 1r$ near $y_*(s)$ and use again Lemma \ref{lemgam} near the degenerate critical point $y_c$.
As 
\[
y\alpha-\psi(y,\omega_Q,Q)=(\frac{\pi}{2}+y_Q)\alpha-\Big((\frac{\pi}{2}+y_Q-y)\alpha+\phi(0,\frac{\pi}{2}+y_Q-y,0,r)\Big),
\] 
applying Lemma \ref{lemgam} with $\frac{\pi}{2}+y_Q-y=y_*(r)+Y$, $|Y|<2\e_0$, provides a change of variable $Y\rightarrow\sigma$ such that
\begin{equation}\label{phapsio}
y\alpha-\psi(y,\omega_Q,Q)=(\frac{\pi}{2}+y_Q)\alpha-\frac{\sigma^3}{3}-\sigma\zeta_0(\alpha)-\Gamma_0(y_*(r),\alpha,r), \quad y_*(r)=\arcsin(\frac 1r).
\end{equation}
Introducing \eqref{u_xomega23} in \eqref{I0tosplit}, using \eqref{phapsio} and \cite[Thm. 7.7.18]{horFIO} (as in Theorem \ref{thmhorFIO}) gives, modulo $O(\tau^{-\infty})$,
\begin{multline}\label{I0form}
I_{\kappa_{\e_0}}(Q,Q_0,\tau)=\frac{e^{i\pi/6}}{8\pi^2}
\frac{\tau^{\frac{4}{3}}}{\sqrt{s^2-1}}\int \frac{\kappa_{\e_0}(y-y_*)\Sigma_0}{\sqrt{\cos y (\tau \cos y_Q)}}\frac{e^{-i\tau \psi(y,\omega_Q,Q)}}{\sqrt{r\psi(y,
\omega_Q,Q)}}\\
\times \int e^{i\tau (y\alpha-\Gamma_0(y_*(s),\alpha,s))} 
\kappa_{\e_1}(1-\alpha)\frac{ b_{\partial}(y,\alpha,\tau) f(\alpha,s,\tau)}{A_+(\tau^{2/3}\zeta_0(\alpha))}d\alpha dy\\
=\frac{e^{i\pi/6}}{8\pi^2}
\frac{\tau^{\frac{4}{3}-\frac 13}}{\sqrt{s^2-1}\sqrt{r^2-1}}\frac{\kappa_{2\e_0}(y_c-y_*)}{\sqrt{\tau\cos y_Q}}\int e^{-i\tau(\Gamma_0(y_*(s),\alpha,s)+\Gamma(y_*(r),\alpha,r))}\\
\times \kappa_{\e_1}(1-\alpha)\frac{(f_1A(\tau^{2/3}\zeta_0(\alpha))+\tau^{-1/3}f_2 A'(\tau^{2/3}\zeta_0(\alpha)))}{A_+(\tau^{2/3}\zeta_0(\alpha))}d\alpha,
\end{multline}
where $f_1, f_2$ are smooth asymptotic expansions, $f_1$ has main contribution $\frac{b_{\partial} f}{\sqrt{\cos y}}\times \frac{\sqrt{r^2-1}}{\sqrt{r\psi(y,\omega_Q,Q)}}(\sim 1)$ and where the factor $\tau^{-\frac 13}$ in the second equality in \eqref{I0form} arises from integration in $\sigma$. \\

Let $\beta:=-\tau^{2/3}\zeta_0(\alpha)$, then $\alpha=\alpha(\beta,\tau)$ as in \eqref{eqtalphabet}, with $|d\alpha/d\beta|\sim 2^{-1/3}\tau^{-2/3}$ and we have
\begin{multline}\label{sumGam}
\left(\frac{\pi}{2}+y_Q\right)\alpha-\Big(\Gamma_0(y_*(s),\alpha,s)+\Gamma_0(y_*(r),\alpha,r)\Big)\\=(y_c-y_*(s))\alpha-\sqrt{s^2-1}-\sqrt{r^2-1}-\frac{(1-\alpha)^2}{2}\Big(\frac 1r(1+O(1-\alpha))+\frac 1s(1+O(1-\alpha))\Big)\\
=(y_c-y_*(s))-\sqrt{s^2-1}-\sqrt{r^2-1}\\
-2^{-1/3}(y_c-y_*(s))\tau^{-2/3}\beta-\frac{\tau^{-4/3}\beta^2}{2^{5/3}}\Big(\frac 1r(1+O(\tau^{-2/3}\beta))+\frac 1s(1+O(\tau^{-2/3}\beta))\Big).
\end{multline}
We can now bound the integral \eqref{I0form} and prove \eqref{boundI0horsOS}.
Let $\chi_1\in C^{\infty}((\frac 12, 2))$ equal to $1$ near $1$, so that
\[
\kappa_{2\e_1}(\tau^{2/3}\beta)=\kappa_1(\beta/2)+\sum_{2\leq 2j\leq \log_2(2\e_1\tau^{2/3})}\chi_1(\beta 2^{-2j}).
\] 
\begin{lemma}
Denote by $I_{\kappa_{\e_0},\kappa_1}$ and $I_{\kappa_{\e_0},j}$ the integral \eqref{I0form} with cut-off $\kappa_1(\beta/2)$ and $\chi_1(\beta 2^{-2j})$, respectively. 
There exists a uniform constant $C>0$ such that 
\[
|I_{\kappa_{\e_0},\kappa_1}(Q,Q_0,\tau)|\leq \frac Ct, \quad \sum_{2\leq 2j\leq \log_2(2\e_1\tau^{2/3})} |I_{\kappa_{\e_0},j}(Q,Q_0,\tau)|\leq \frac Ct.
\]
\end{lemma}
\begin{proof}
On the support of $\kappa_1(\beta/2)$ the Airy factors in \eqref{I0form} behave like symbols and the phase $i\tau \times $ \eqref{sumGam} of the integral \eqref{I0form} depending on $\beta$ equals $i(2^{-1/3}\tau^{1/3}(y_c-y_*(s))\beta+O(\tau^{-1/3}))$. If $\tau^{1/3}|y_c-y_*(s)|\geq \tau^{\e/2}$ with $\epsilon$ as in \eqref{secderivomomtildepsi}, integrations by parts yield $O(\tau^{-\infty})$. If $\tau^{1/3}|y_c-y_*(s)|\leq \tau^{\epsilon/2}$, then
\begin{equation}\label{intkappa1}
\frac{t}{rs}\sim \frac{\sqrt{s^2-1}+\sqrt{r^2-1}}{rs}\leq \arcsin\Big(\frac{\sqrt{s^2-1}+\sqrt{r^2-1}}{rs}\Big)=y_Q+\frac{\pi}{2}-(y_c-y_*(s))\leq  \cos(y_Q)+|y_c-y_*(s)|,
\end{equation}
hence, for $\sqrt{s^2-1}\geq \sqrt{r^2-1}\geq c$, $t\sim \sqrt{s^2-1}+\sqrt{r^2-1}$, $\tau^{1/3}|y_c-y_*(s)|\leq \tau^{\epsilon/2}$, $\sqrt{\tau\cos(y_Q)}\geq \tau^{\epsilon/2}$,
\[
|I_{\kappa_{\e_0},\kappa_1}(Q,Q_0,\tau)|\lesssim \frac{\tau^{\frac 43-\frac 13-\frac 23}}{rs}\frac{1}{\sqrt{\tau\cos(y_Q)}} \leq\frac{\tau^{1/3}}{\sqrt{\tau\cos(y_Q)}}\times \frac{(\cos y_Q+|y_c-y_*(s)|)}{t}\leq \frac 1t.
\]
Consider now the integral $I_{\kappa_{\e_0},j}$ where $\beta\sim 2^{2j}\leq 2\e_1\tau^{2/3}$ and let $\beta=2^{2j}\tilde \beta$, with $\tilde\beta\sim 1$ on the support of $\chi_1$. The Airy terms in the last line of \eqref{I0form} start to oscillate and, using $A(-\beta)=\sum_{\pm}e^{\pm i\pi/3}A_{\pm}(-\beta)$, we decompose $I_{\kappa_{\e_0},j}=\sum_{\pm}I^{\pm}_{\kappa_{\e_0},j}$ where $I^{\pm}_{\kappa_{\e_0},j}$ has  phase $\frac 23i \beta^{3/2}\mp\frac 23i \beta^{3/2}$, and symbol $\sim f_1$, so we must distinguish two situations. Notice that  $y_c-y_*(s)$ is not signed as
\[
y_c-y_*(s)=y_Q+\frac{\pi}{2}-\arcsin \frac 1r-\arcsin \frac1s=\cos y_Q(1+O(\cos y_Q))-\arcsin(\frac{\sqrt{s^2-1}+\sqrt{r^2-1}}{rs})
\] 
and may vanish when $QQ_0$ is tangent to $B_3(0,1)$.
The phase $i\tau\times$\eqref{sumGam} with $\beta=2^{2j}\tilde \beta$, $\tilde \beta\sim 1$ becomes
\begin{equation}\label{phaAi1}
-2^{-1/3}\tau^{1/3}(y_c-y_*(s))2^{2j}\tilde\beta - 2^{-5/3}\tau^{-1/3}2^{4j}\tilde\beta^2\Big(\frac 1r(1+O(2^{2j}\tau^{-2/3}\tilde\beta))+\frac 1s(1+O(2^{2j}\tau^{-2/3}\tilde\beta))\Big),
\end{equation}
which has a critical point on the support of $\chi_1(\tilde\beta)$ when $0<y_*(s)-y_c\sim \tau^{-2/3} 2^{2j+1}(\frac 1r+\frac 1s)$.
There are at most three values of $j$, denoted $j_*,j_*\pm 1$ for which we may have such a critical point; for all $j\notin \{j_*,j_*\pm 1\}$, the phase is non stationary. As $t\sim \sqrt{s^2-1}+\sqrt{r^2-1}\leq rs \arcsin(\frac{\sqrt{s^2-1}+\sqrt{r^2-1}}{rs})=rs(\arcsin (1/s)+\arcsin(1/r))$, we then have $\frac{t}{rs}\lesssim \cos y_Q$.
If $j*$ is such that $2^{4j_*}\tau^{-1/3}/r\gg 1$, the stationary phase applies in $\tilde\beta$ and gives
\[
|I^+_{\kappa_{\e_0},j_*(\pm 1)}(Q,Q_0,\tau)|\lesssim\frac{\tau^{1/3}}{rs\sqrt{\tau\cos y_Q}}\times 2^{2j_*}\big(\frac{\tau^{1/3}}{2^{4j_*}}\big)^{1/2}=\frac{1}{rs\sqrt{\cos y_Q}}\lesssim \frac{\sqrt{\cos y_Q}}{t} \leq \frac 1t,
\]
where we have used $\frac{1}{rs}\lesssim \frac{\cos y_Q}{t}$. If $2^{4j_*}\tau^{-1/3}/r\lesssim 1$, then, as $s\geq r$ and $t\in  [s,2s]$, we have
\[
|I^+_{\kappa_{\e_0},j_*(\pm 1)}(Q,Q_0,\tau)|\lesssim\frac{\tau^{1/3}}{rs\sqrt{\tau\cos y_Q}} 2^{2j_*}\lesssim \frac{\tau^{1/3}}{rs\sqrt{\tau\cos y_Q}} \sqrt{\tau^{1/3}r}\sim \frac 1s  \frac{1}{\sqrt{r\cos y_Q}}\leq\frac 1s \sqrt{\frac st}\sim \frac 1t.
\]
When the phase of \eqref{I0form} is $i\tau\times$\eqref{sumGam}$+\frac 43 \beta^{3/2}$, taking $\beta=2^{2j}\tilde \beta$, $\tilde \beta\sim 1$, it becomes \eqref{phaAi1}$+\frac 43 2^{3j}\tilde\beta^{3/2}$ and the last term dominates the second one in \eqref{phaAi1} as $2^{3j}\gg \tau^{-1/3}2^{4j}/r$ as $2^{2j}\leq\e\tau^{2/3}$ and $r>1$, hence this phase is stationary when $0<y_c-y_*(s)\sim \tau^{-1/3}2^{j_*}$ and we find again $\frac{t}{rs}\lesssim \cos y_Q$ and, when such  $j_*$ does exist, the stationary phase with large parameter $2^{3j_*}$ gives
\[
|I^-_{\kappa_{\e_0},j_*(\pm 1)}(Q,Q_0,\tau)|\lesssim\frac{\tau^{1/3}}{rs\sqrt{\tau\cos y_Q}}\times 2^{2j_*-\frac{3j_*}{2}}\lesssim \frac{\cos y_Q}{t} \times \frac{\tau^{1/3-1/2}}{\sqrt{\cos y_Q}}\times 2^{j_*/2}\leq\frac{\sqrt{\cos y_Q}}{t}\times \big(\frac{2^{j_*}}{\tau^{1/3}}\big)^{1/2}\ll \frac 1t.
\]
\end{proof}
The last lemma completes the proof of \eqref{boundI0horsOS} when $\tau \cos y_Q>\tau^{\e}$.

\paragraph{\bf $Q$ belongs to a conic neighborhood of the $OS$ axis of aperture $\tau^{-1+\e}$ with small $\e\in (0,1/6)$} When $\tau|\cos y_Q|\leq \tau^{\epsilon}$ for small $\e>0$, the stationary phase in $\omega_Q$ cannot be used anymore. This situation corresponds to observation points $Q$ very close to the $OS$ (or $ON$) axis. If $y_Q$ is near $\frac{\pi}{2}$, i.e. $Q$ is near $ON$, then the phase of \eqref{Fudiezzerd3} is non stationary in $y$ and the contribution of the integral is $O(h^{\infty})$. Let $y_Q$ near $-\frac{\pi}{2}$. We have $\psi(y,\omega, Q_S(r))=(1+r^2+2r\sin y)^{1/2}=\phi(0,-y,0,r)$ and we let
\[
e(y,\tau):=\int e^{-i\tau(\psi(y,\omega,Q)-\phi(0,-y,0,r))}\frac{\phi(0,y_*(r),0,r)}{\psi(y,\omega,Q)}d\omega, \quad \phi(0,y_*(r),0,r)=\sqrt{r^2-1}.
\]
As $\psi(y,\omega,Q)=\phi(0,-y,0,r)+O(\tau^{-1+\epsilon})$, then $|\partial^j_ye(y,\tau)|\leq \tau^{j\epsilon}$ for all $j\geq 1$ and we have
\begin{equation}\label{I0tosplitnew}
I_{\kappa_{\e_0}}(Q,Q_0,\tau)=\frac{i}{4\pi \tau} \int 
\kappa_{\e_0}(y-y_*(s))\frac{e^{-i\tau \phi(0,-y,0,r)}}{\phi(0,y_*(r),0,r)}e(y,\tau) \mathcal{F}(\partial_x u^+)(y,Q_0,\tau) dy,
\end{equation}
 with $\mathcal{F}(\partial_x u^+)(y,Q_0,\tau)$ given in \eqref{u_xomega23}. We now transform $I_{\kappa_{\e_0}}$ into an Airy type integral as before.
 \begin{lemma}
 For $0<\e<1/6$, we have, modulo $O(\tau^{-\infty})$,
 \begin{multline}\label{i0formnew}
I_{\kappa_{\e_0}}(Q,Q_0,\tau)=\frac{e^{i\pi/6}}{8\pi^2}
\frac{\tau^{\frac{4}{3}-\frac 13}}{\sqrt{s^2-1}\sqrt{r^2-1}}\kappa_{2\e_0}(-(y_*(r)+y_*(s)))\int e^{-i\tau(\Gamma_0(y_*(s),\alpha,s)+\Gamma_0(y_*(r),\alpha,r))}\\
\times \kappa_{\e_1}(1-\alpha)\frac{(f_1A(\tau^{2/3}\zeta_0(\alpha))+\tau^{-1/3}f_2 A'(\tau^{2/3}\zeta_0(\alpha)))}{A_+(\tau^{2/3}\zeta_0(\alpha))}d\alpha,
\end{multline}
where the symbols $f_{1,2}$ are asymptotic expansions with small parameter $\tau^{-1+6\e}$ and where $f_1$ is elliptic.
 \end{lemma}
 \begin{proof}
As before, the phase $\phi(0,-y,0,r)$ has a unique degenerate critical point at $-y=y_*(r)=\arcsin \frac 1r$ so the phase of \eqref{I0tosplitnew} reads as $y\alpha-\phi(0,-y,0,s)=-\frac{\sigma^3}{3}-\sigma\zeta_0(\alpha)-\Gamma_0(y_*(r),\alpha,s)$. 
We use Malgrange preparation theorem  for $C^{\infty}$ functions \cite[Thm.7.5.4]{horFIO} to transform the integral in $\sigma$ into a sum of Airy functions : the symbol of $I_{\kappa_{\e_0}}$ depending of $y$ equals $e(y,\tau)b_{\partial}(y,\alpha,\tau)$; making the change of variables $y\rightarrow \sigma $, letting $g(\sigma,\alpha,\tau):=e(y(\sigma),\tau)b_{\partial}(y(\sigma),\alpha,\tau)\frac{dy}{d\sigma}$ and applying \cite[Thm.7.5.4]{horFIO} to $g$ gives, using \cite[(7.5.14)]{horFIO},
\[
g=q(\sigma,\alpha,\tau)(\sigma^2+\zeta_0(\alpha))+\sigma r_2(\alpha,\tau)+r_1(\alpha,\tau),
\]
where $|\partial^j_{\sigma}q|\leq c_j\int (|g|+|g^{(5+j)}|)d\sigma\lesssim \tau^{6\e}$; here $r_1$ and $r_2$ represent the main contributions of $f_1$ and $f_2$, respectively. One integration by parts in $\sigma$ yields 
\[
\int e^{-i\tau(\sigma^3/3+\sigma \zeta_0(\alpha))} q(\sigma,\alpha,\tau)(\sigma^2+\zeta_0(\alpha))d\sigma=\frac{1}{i\tau}\int e^{-i\tau(\sigma^3/3+\sigma \zeta_0(\alpha))} \partial_{\sigma} q(\sigma,\alpha,\tau) d\sigma
\]
and applying again \cite[Thm.7.5.4]{horFIO} to $\partial_{\sigma} q$ this time allows to obtain $r_2^1(\alpha,\tau)$, $r_1^1(\alpha,\tau)$ such that $ \partial_{\sigma} q=q_1(\sigma^2+\zeta_0(\alpha))+\sigma r_2^1+r_1^1$, where now $|\partial^j_\sigma q_1|\leq c_j |\partial^{j+5}(\partial q)| \leq c^2_j |\partial^{j+6+5}g|$, hence $|\partial_{\sigma}q_1|\lesssim \tau^{12\e}$. In the same way we make repeated integrations by parts writing at each step, by Malgrange,
 \[
 \partial_{\sigma}q_k=q_{k+1}(\sigma,\alpha,\tau)(\sigma^2+\zeta_0(\alpha))+\sigma r_2^k(\alpha,\tau)+r_1^k(\alpha,\tau),
 \]
 where $|\partial^j_{\sigma}q_{k+1}|\leq c_j |\partial^{5+j}_{\sigma}(\partial_{\sigma} q_k)|$ which gives, by induction, $|\partial_{\sigma}q_{k+1}|\leq C^k_1 |\partial^{6(k+1)}_{\sigma}g|\leq C_1^k \tau^{6(k+1)\e}$ and where the new constants $C_1^k$ depend on $c_j$, $j\leq 6(k+1)$. As, at each step, the integrations by parts yield a factor $\tau^{-1}$, it follows that, for $6\e<1$, writing $f_j:=r_j+\sum_{k\geq 1} \tau^{-k}r_j^k(\alpha)$ yields \eqref{i0formnew}.
\end{proof}

The difference with the last part of the proof of Theorem \ref{thmCE} is that instead of \eqref{phaGam2} we now have 
\begin{equation}\label{phaGam2sr}
\Gamma_0(y_*(s),\alpha,s)+\Gamma_0(y_*(r),\alpha,r)=\sqrt{s^2-1}+\sqrt{r^2-1}+(y_*(s)+y_*(r)) \alpha +\frac{(1-\alpha)^2}{2}\Big(\frac{1+O(1-\alpha)}{\sqrt{s^2-1}}+\frac{1+O(1-\alpha)}{\sqrt{r^2-1}}\Big),
\end{equation}
hence instead of \eqref{phaGam2}, the phase of \eqref{i0formnew} becomes, for $\beta=\tau^{2/3}\zeta_0(\alpha)$,
 \begin{multline}\label{phaI0CEnew}
-\tau\big(\sqrt{s^2-1}+\sqrt{r^2-1}+y_*(s)+y_*(r)\big)\\
+2^{-1/3}\tau^{1/3}(y_*(s)+y_*(r))\beta-2^{-5/3}\beta^2\Big(\frac{\tau^{-1/3}}{\sqrt{s^2-1}}(1+O(\tau^{-2/3}\beta))+\frac{\tau^{-1/3}}{\sqrt{r^2-1}}(1+O(\tau^{-2/3}\beta))\Big).
 \end{multline}
The main advantage over the case of section \ref{secoutside} is that $y_*(s)+y_*(r)=\arcsin(\frac{\sqrt{s^2-1}+\sqrt{r^2-1})}{rs})\sim \frac{t}{rs}$ is strictly positive. If $|\beta|>2$ is large, then the Airy factor in \eqref{i0formnew} is either exponentially decreasing (for $\beta<-2$) or oscillatory (for $\beta>2$), in which case, after decomposing it as before as a sum of terms with phases $0$ and $+\frac 43i\beta^{4/3}$, we obtain two non stationary phase functions, hence a contribution $|\beta|^{-N}$ for all $N\geq 1$. When $|\beta|$ is bounded the Airy factor in \eqref{i0formnew} is a symbol, and we distinguish two cases: 
\begin{itemize}
\item either $\tau^{1/3}(y_*(s)+y_*(r))\sim \tau^{1/3}t/(rs)\lesssim 1$, in which case $\tau^{1/3}/(rs)\lesssim 1/t$ and 
\[
|I_0(Q,Q,\tau)|\lesssim \frac{\tau^{\frac 43-\frac 13-\frac 23}}{\sqrt{s^2-1}\sqrt{r^2-1}}\lesssim \frac 1t,
\]
\item or $\tau^{1/3}(y_*(s)+y_*(r))\sim \tau^{1/3}t/(rs)\sim M\gg 1$, when 
\[
|I_0(Q,Q,\tau)|\lesssim \frac{\tau^{\frac 43-\frac 13-\frac 23}}{\sqrt{s^2-1}\sqrt{r^2-1}}\times M^{-N}\sim (M/t)\times M^{-N},
\]
for all $N\geq 1$, which follows by repeated integrations by parts in $\beta$.
\end{itemize}
This allows to conclude that \eqref{boundI0horsOS} holds and to achieve the proof of Proposition \ref{propI0}.

\subsection{The case $dist(Q_0,B_3(0,1))>c\geq  dist(Q,B_3(0,1))$ }\label{secofoc}

When the source point $Q_0$ is not too close to the boundary, the Melrose and Taylor parametrix from Proposition \ref{MT} can be used for $Q$ near the glancing regime. In this section we consider the situation $\text{dist}(Q_0,\partial\Omega)>c\geq \text{dist}(Q,\partial\Omega)$ for some small, fixed $c>0$, independent of $h$. If $Q_0$ is very close to the boundary and $Q$ is far from a small neighborhood of it, using the symmetry of the Green function with respect to $Q$ and $Q_0$ allows to replace $Q$ by $Q_0$ and reduce the analysis to our case.
Let $Q_0=Q_N(s)$ as before, for some $s>1+c$ and let $Q=(x,y,\omega)\in \Omega$ with $0\leq x\leq c$. Let $u$ solve \eqref{WEOC} and let $u^+:=1_{t>0}u$ as in Section \ref{secgeneral}. By Lemma \ref{lemtimelocaliz}, we are reduced to obtaining dispersive bounds for $u^+(Q,Q_0,t)$ for $t$ such that $\text{dist}(Q_0,\partial\Omega)+\text{dist}(Q,\partial\Omega)\leq t\leq \text{dist}(Q_0,\partial\Omega)+\text{dist}(Q,\partial\Omega)+C_0$, where $C_0$ depends only on the diameter of the obstacle. Write the Fourier transform in time of $u^+$, $\mathcal{F}(u^+)(Q,Q_0,\tau)$, for $Q=(x,y,\omega)$ near $\mathcal{C}^{3,0}_{Q_0}$, modulo a smoothing operator, under the form
\begin{multline}\label{formu+h}
\mathcal{F}(u^+)(Q,Q_0,\tau)=\frac{\tau}{2\pi}\int  e^{i\tau y\alpha} \Big(aA(\tau^{2/3}\zeta(x,\alpha))+b\tau^{-1/3}A'(\tau^{2/3}\zeta(x,\alpha))\Big)\hat{F}_{\tau}(\tau\alpha) d\alpha\\
-\frac{\tau}{2\pi}\int  e^{i\tau y\alpha} \Big(aA_+(\tau^{2/3}\zeta(x,\alpha))+ b \tau^{-1/3}A'_+(\tau^{2/3}\zeta(x,\alpha))\Big)\frac{A(\tau^{2/3}\zeta_0(\alpha))}{A_+(\tau^{2/3}\zeta_0(\alpha))}\hat{F}_{\tau}(\tau\alpha) d\alpha,
\end{multline}
where $\hat{F}_{\tau}$ is defined by \eqref{FourierformFball} (with $d_1=d=3$).
The first integral of \eqref{formu+h} is $w_{in}(Q,Q_0,\tau)$ for which the usual dispersion estimate holds, hence it remains to handle the second line of \eqref{formu+h}. 
\begin{lemma}
There exists a constant $C>0$ independent of $Q_0,Q$ such that, for all $Q=(x,y,\omega)$ in a small, fixed neighborhood of $\mathcal{C}^{3,0}_{Q_0}$ such that $\text{dist}(Q,\partial\Omega)\leq c$, $x\geq 0$, $\omega\in [0,2\pi]$, and for all $t$ such that $\text{dist}(Q_0,\partial\Omega)+\text{dist}(Q,\partial\Omega)\leq t\leq \text{dist}(Q_0,\partial\Omega)+\text{dist}(Q,\partial\Omega)+C_0$, the following holds
\begin{equation}\label{toestimu+h}
\Big|\frac{1}{2\pi}\int  e^{i\tau y\alpha} \Big(aA_+(\tau^{2/3}\zeta(x,\alpha))+ b \tau^{-1/3}A'_+(\tau^{2/3}\zeta(x,\alpha))\Big)\frac{A(\tau^{2/3}\zeta_0(\alpha))}{A_+(\tau^{2/3}\zeta_0(\alpha))}\hat{F}_{\tau}(\tau\alpha) d\alpha\Big|\leq \frac{C}{t}.
\end{equation}
Moreover, for $\chi\in C^{\infty}_0((\frac 12,2))$, $h\in (0,1)$ and for $Q$ and $t$ as above, $|\chi(hD_t)u^+(Q,Q_0,t)|\lesssim \frac{1}{h^2t}$.
\end{lemma}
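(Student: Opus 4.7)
The plan is to substitute the explicit form of $F_\tau$ from Lemma~\ref{LemFtauhat} directly into the integral on the left side of \eqref{toestimu+h} and then estimate the resulting one-dimensional oscillatory integral in $\alpha$. With $d=3$ and $\psi(y_0,s)=\sqrt{s^2-1}$, Fourier inversion in $z$ pins the dual variable and yields, modulo $O(\tau^{-\infty})$,
\[
\hat F_\tau(\tau\alpha)=\frac{2\pi\,\tau^{2/3}}{\sqrt{s^2-1}}\,e^{-i\tau(y_0\alpha+\sqrt{s^2-1})}\chi_1(\alpha)f(\alpha,\tau).
\]
Using $\theta(y,\alpha)=y\alpha$ from \eqref{soleikball}, the left side of \eqref{toestimu+h} reduces (up to sign) to
\[
I_0(Q,Q_0,\tau)=\frac{\tau^{2/3}}{\sqrt{s^2-1}}\int e^{i\tau[(y-y_0)\alpha-\sqrt{s^2-1}]}\bigl[aA_+(\tau^{2/3}\zeta(x,\alpha))+b\tau^{-1/3}A'_+(\tau^{2/3}\zeta(x,\alpha))\bigr]\frac{A(\tau^{2/3}\zeta_0(\alpha))}{A_+(\tau^{2/3}\zeta_0(\alpha))}\chi_1(\alpha)f(\alpha,\tau)\,d\alpha,
\]
and it suffices to prove $|I_0|\le C/t$ uniformly; since $\mathrm{dist}(Q_0,\partial\Omega)\ge\varepsilon_0$ one has $\sqrt{s^2-1}\sim s-1+O(1)\lesssim t$, so the target bound is equivalent to showing that the remaining integral is $O(1)$.

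The analysis then parallels the one carried out in Section~\ref{Thm3Dfarfrombord} for Proposition~\ref{prop3dballestimg}. First I would decompose $\alpha$ via the cutoff $\chi_2$ introduced in Lemma~\ref{lemtildezgrand} and its complement. For $\alpha$ bounded away from $1$ on the support of $1-\chi_2$, the Airy quotient $A/A_+$ is $O(1)$ on the illuminated side $\zeta_0(\alpha)<0$ and exponentially small on the shadow side $\zeta_0(\alpha)>0$, and the Airy factors $A_+(\tau^{2/3}\zeta(x,\alpha)),A'_+(\tau^{2/3}\zeta(x,\alpha))$ admit the integral representations $\tau^{1/3}\int e^{i\tau(\sigma^3/3+\sigma\zeta(x,\alpha))}(1,\,\sigma/i)\,d\sigma$. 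Substituting them and the analogous representation for $A_-/A_+$ produces a two-dimensional oscillatory integral in $(\sigma,\alpha)$ whose critical points correspond to the two broken rays joining $Q_0$ to $Q$ after one specular reflection off $\mathbb{S}^{2}$; by strict convexity of the ball these critical points are non-degenerate, and stationary phase produces an amplitude of order $1/\sqrt{|Q_0-P_\ast||P_\ast-Q|}$ which, combined with the prefactor $1/\sqrt{s^2-1}$ and with $t\sim|Q_0-P_\ast|+|P_\ast-Q|$ from Lemma~\ref{lemtimelocaliz}, produces the $C/t$ bound.

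For $\alpha$ on the support of $\chi_2$, the $\alpha$-critical point and the $\sigma$-critical point coalesce at the tangent direction, so plain stationary phase fails and one is in a Pearcey/Airy regime. Here I would apply the same normal-form change of variables used in the proof of Lemma~\ref{LemFtauhat} to bring the phase of $\alpha\mapsto(y-y_0)\alpha-\psi(y,s)$ to the canonical cubic $-\sigma^3/3-\sigma\zeta_0(\alpha)+\Gamma(\alpha,s)$, set $1-\alpha=\tau^{-2/3}\beta$, and estimate the resulting integral by the dyadic argument carried out in Lemmas~\ref{lemdecresI0} and~\ref{lemJbounded}: a uniform $O(1)$ bound on the range $|\tau^{1/3}(y-y_0)|\lesssim 1$ (via the explicit stationary-phase bookkeeping in $\beta$, with the $\beta^{1/4}$ growth of $1/A_+(\tau^{2/3}\zeta_0(\alpha(\beta)))$ absorbed by the $\beta^{-3/4}$ gain from stationary phase), and rapid polynomial decay outside it. The main obstacle is precisely the uniform treatment of this coalescence, which requires blending the Airy asymptotics with the stationary-phase analysis as in Section~\ref{Thm3Dfarfrombord}; the shadow contribution $\alpha>1+c$ is $O(\tau^{-\infty})$ thanks to the exponential decay of $A/A_+$.

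Finally, for the conclusion on $\chi(hD_t)u^+$, we apply $\chi(hD_t)$ to the decomposition $\mathcal{F}(u^+)=w_{in}-\tau I_0$ and invert the Fourier transform in time. The incoming part $w_{in}$ gives the free $\mathbb{R}^3$ dispersion estimate \eqref{disprd} of order $h^{-2}/t$, and for the reflected part the bound $|I_0|\le C/t$, the pointwise estimate $|\tau|\lesssim h^{-1}$ on $\mathrm{supp}\,\chi(h\tau)$, and the length $O(h^{-1})$ of that support combine to give
\[
\Bigl|\int e^{it\tau}\chi(h\tau)\,\tau\,I_0(Q,Q_0,\tau)\,d\tau\Bigr|\lesssim\frac{1}{h^{2}t},
\]
which yields the claimed estimate $|\chi(hD_t)u^+(Q,Q_0,t)|\lesssim h^{-2}/t$.
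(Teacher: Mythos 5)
Your proposal correctly begins by substituting $F_\tau$ from Lemma~\ref{LemFtauhat} and reducing to showing the remaining $\alpha$-integral is $O(1)$ (since $\psi(y_0,s)/t$ is bounded below). It also correctly derives the second conclusion from the first. However, the treatment of the coalescing regime — the crux of the statement — has genuine gaps.

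First, the two–piece decomposition via $\chi_2$ (from Lemma~\ref{lemtildezgrand}) does not capture the relevant geometry here. The paper instead decomposes the $\beta$-integral into four pieces $J_{\tilde\chi^1,\tilde\chi^2}$ according to the \emph{independent} sizes of the two Airy arguments $2^{1/3}(\tau^{2/3}x+\beta)$ and $2^{1/3}\beta$. Because $Q$ is allowed to sit anywhere with $x\in[0,\varepsilon_0]$, the quantity $\tau^{2/3}x$ can be $O(1)$, moderate, or huge, and a cutoff in $\alpha$ (equivalently $\beta$) alone does not determine whether $A_+(\tau^{2/3}\zeta(x,\alpha))$ is in its oscillatory or bounded regime. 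This is the whole reason this case is separated from Section~\ref{Thm3Dfarfrombord}.

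Second, on the piece you assign to $1-\chi_2$, you invoke stationary phase in $(\sigma,\alpha)$ with critical points identified as "the two broken rays joining $Q_0$ to $Q$" and an amplitude $\sim 1/\sqrt{|Q_0-P_\ast|\,|P_\ast-Q|}$. This reasoning runs backwards: that factor is precisely the one the paper notes becomes uncontrolled when $Q$ approaches the boundary, which is why \eqref{Greflectedform} is explicitly abandoned in Section~\ref{sectintermed} in favor of the Airy parametrix \eqref{formu+h}. Re-introducing the Kirchhoff-type picture via the Airy integral representation would re-create the singularity, not avoid it. Moreover, $\tau^{1/3}\int e^{i\tau(\sigma^3/3+\sigma\zeta)}(1,\sigma/i)\,d\sigma$ over real $\sigma$ produces $A$ (and $A'$), not $A_+$ and $A'_+$, so the representation you use is not the right one.

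Third, for the $\chi_2$ piece you appeal to the "dyadic argument in Lemmas~\ref{lemdecresI0} and~\ref{lemJbounded}." Those lemmas address a two-dimensional integral in $(\tilde\sigma,\alpha)$ where $\tilde\sigma$ comes from the boundary $y$-integral (both $Q_0$ and $Q$ far from $\partial\Omega$), and the Airy growth there is a $\beta^{1/4}$ factor from the \emph{denominator} $1/A_+(\tau^{2/3}\zeta_0(\alpha))$. In the present integral \eqref{lefttoprove} the relevant Airy structure is different: the quotient $A/A_+$ at $\tau^{2/3}\zeta_0$ is uniformly bounded (it equals $e^{i\pi/3}+e^{-i\pi/3}A_-/A_+$ with $|A_-/A_+|=1$), and the decaying factor is $A_+(\tau^{2/3}\zeta(x,\alpha))$ sitting in the numerator. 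The actual argument the paper uses is: write $A=e^{i\pi/3}A_+ + e^{-i\pi/3}A_-$, which produces the two phases $\varphi^\pm$ of \eqref{phaseJpmk}; for $\varphi^+$ there is at most one critical point $\beta_c$, for $\varphi^-$ at most two, all nondegenerate on the supports of $1-\breve\chi_0(\cdot/M)$; after the change of variable $\beta=\beta_c\tilde\beta$ the net contribution $\beta_c\cdot(\tau^{2/3}x+\beta_c)^{-1/4}\cdot(\tau^{2/3}x+\beta_c)^{1/4}/\beta_c\sim 1$ closes the bound. Neither the normal form reduction from Lemma~\ref{LemFtauhat} (which concerns the phase $-\psi(\tilde x,\tilde y,Q_0)$, not the Airy phases) nor the two-dimensional dyadic argument from Lemma~\ref{lemJbounded} substitutes for this computation. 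Your sketch therefore lacks the mechanism that actually delivers the uniform $O(1)$ bound as $x\to 0$.
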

\begin{proof}
The second statement of the lemma follows from \eqref{toestimu+h} and \eqref{formu+h}. %
As $\text{dist}(Q,\partial\Omega)\leq c$ is small, we let $s\leq t\leq s+C_0+c$. Introducing \eqref{FourierformFball} in
\eqref{toestimu+h}, the last inequality becomes  
\begin{multline}\label{lefttoprove}
\Big|\frac{e^{i\tau (y-y_*(s)-\sqrt{s^2-1})}}{2\pi}\int e^{-i\tau((y-y_*(s))(1-\alpha)+\frac{(1-\alpha)^2}{2\sqrt{s^2-1}}(1+O(1-\alpha)))}\kappa_{\e_1}(1-\alpha) f(\alpha,s,\tau)\\
\times \tau^{2/3}\frac{A(\tau^{2/3}\zeta_0(\alpha))}{A_+(\tau^{2/3}\zeta_0(\alpha))} \Big(aA_+(\tau^{2/3}\zeta(x,\alpha))+b\tau^{-1/3}A_+'(\tau^{2/3}\zeta(x,\alpha))\Big)d\alpha\Big|\leq C\frac{\sqrt{s^2-1}}{t},
\end{multline}
where $\kappa_{\e_1}$ and $f$ are as in Lemma \ref{lemmaF}.
For $t\in [s,s+C_0+c],$ the quotient $\frac{\sqrt{s^2-1}}{t}$ is always larger than a constant depending only on $c$: indeed, if $1+c<s$ is bounded, then so is $t$ and $\frac{\sqrt{s^2-1}}{t}\gtrsim c$ ; if $s$ is large then $\sqrt{s^2-1}\sim s\sim t$ and $\frac{\sqrt{s^2-1}}{t}\gtrsim 1$. As $c$ has been fixed, it will be enough to prove that the integral in \eqref{lefttoprove} is bounded by a constant independent of $(x,y)$. Let $-\tau^{2/3}\zeta_0(\alpha)=\beta$ as before, then $|\beta|\leq 2\e_1 \tau^{2/3}$ on the support of $\kappa_{\e_1}$ and we write $\alpha=\alpha(\beta,\tau)$ as in \eqref{eqtalphabet}.
Let also
\[
\tilde a(x,y,\beta,\tau):= f(\alpha(\beta,\tau),s,\tau)\Big(a+b\tau^{-1/3}\frac{A'+}{A_+}(\tau^{2/3}\zeta(x,\alpha(\beta,\tau)))\Big)\times (\tau^{2/3}\frac{d\alpha}{d\beta}),
\]
with $\frac{A'_+}{A_+}$ defined \eqref{eq:Phi+}, then $\tilde a $ is a symbol of order $0$ w.r.t $\beta$. The integral in \eqref{lefttoprove} becomes 
\begin{equation}\label{intJchichij}
\int e^{-i\tau^{1/3}((y-y_*(s))\beta+\frac{\tau^{-2/3}\beta^2}{2\sqrt{s^2-1}}(1+O(\tau^{-2/3}\beta)))}\kappa_{\e_1}(1-\alpha(\beta,\tau))\tilde a(x\,y,\alpha(\beta),\tau) A_+(\tau^{2/3}\zeta(x,\alpha(\beta)))\frac{A(-\beta)}{A_+(-\beta)} d\beta.
\end{equation}
As $\zeta(x,\alpha)=\zeta_0(\alpha)+x\partial_x\zeta(0,\alpha)+O(x^2)$ then $\tau^{2/3}\zeta(x,\alpha(\beta))=-(\beta+(\tau^{2/3}x)(-\partial_x\zeta(0,\alpha)+O(x)))$, where $-\partial_x\zeta(0,\alpha)=2^{1/3}$ and $0\leq x\leq c$. 
For $|\beta|\lesssim 1$, all the factors are bounded and we conclude. For $\beta\ll -1$, the Airy factor $A(-\beta)$ is exponentially decreasing and $|A_+(\tau^{2/3}\zeta(x,\alpha(\beta)))/A_+(-\beta)|\leq 1$ and we conclude. We are left with the case $1<\beta\leq 2\e_1\tau^{2/3}$, when all the Airy terms have oscillatory behavior. As $A(-\beta)/A_+(-\beta)=e^{i\pi/3}+e^{-i\pi/3}e^{2i\mu(\beta)}$, the integral \eqref{intJchichij} may be split in two parts with phase functions given by, respectively,
\begin{multline}\label{phaseJpmk}
\varphi_{+}(\beta,x,y,\tau)=-\tau^{1/3}(y-y_*(s))\beta-\frac{\tau^{-1/3}\beta^2}{2\sqrt{s^2-1}}(1+O(\tau^{-2/3}\beta)) - \mu(\tau^{2/3}\zeta(x,\alpha(\beta))),\\
\varphi_{-}(\beta,x,y,\tau)=-\tau^{1/3}(y-y_*(s))\beta-\frac{\tau^{-1/3}\beta^2}{2\sqrt{s^2-1}}(1+O(\tau^{-2/3}\beta)) - \mu(\tau^{2/3}\zeta(x,\alpha(\beta)))+2\mu(\beta),
\end{multline}
where $\mu(w)= \frac 23(-w)^{3/2}(1+O((-w)^{-3/2}))$ has been defined in \eqref{eq:Phi+}. In both cases the symbol equals
\begin{equation}\label{symbJk}
e^{\pm i\pi/3}
\kappa_{\e_1}(1-\alpha(\beta,\tau))\tilde a(x,y,\beta,\tau)\Sigma(\tau^{2/3}\zeta(x,\alpha(\beta))),
\end{equation}
where $\Sigma(w)= \frac{(-w)^{-1/4}}{2\sqrt{\pi}}(1+O((-w)^{-3/2})$ has been defined in \eqref{eq:Phi+}. 

Let $X:=2^{1/3}\tau^{2/3}x$ and $Y:=\tau^{1/3}(y_*(s)-y)$ where $|y-y_*(s)|<1$ is small, hence $|Y| < \tau^{1/3}$, $0\leq X \leq 2^{1/3} \tau^{2/3}c$. With these notations, the critical points of the phase $\varphi_{+}(\beta,x,y,\tau)$ satisfy
\[
\sqrt{2}(X+\beta)^{1/2}(1+O(\tau^{-2/3}(X+\beta)))=Y-\frac{\tau^{-1/3}\beta}{\sqrt{s^2-1}}(1+O(\tau^{-2/3}\beta)).
\]
There exists a unique solution on the support of the symbol, denoted $\beta_c$, which is a non-degenerate critical point. Write $\beta=\beta_cw$ : for $w\notin [1/4,4]$ the phase is non stationary, while for $w\sim 1$, the stationary phase applies as $\partial^2_{w}\varphi_{+}(\beta_cw,x,y,\tau)|_{w=1}\sim\frac{\beta_c^2}{(\beta_c+X)^{1/2}}$ and yields a bound for the corresponding part of \eqref{intJchichij} of the form
\[
  {\beta_c}{(X+\beta_c)^{-1/4}}\times 
  {(\partial^2_{w}\varphi^{+}(\beta_c,x,y,\tau))^{-1/2}}\sim 1,
\]
where the factor $\beta_c$ comes from the change of variable $\beta\rightarrow w$ and the factor $(X+\beta_c)^{1/4}$ comes from the form of $\Sigma$. 
The phase $\varphi_{-}(\beta,x,y,\tau)$ is stationary when
\[
\sqrt{2}(X+\beta)^{1/2}(1+O(\tau^{-2/3}(X+\beta)))=Y-\frac{\tau^{-1/3}\beta}{\sqrt{s^2-1}}(1+O(\tau^{-2/3}\beta)) +2\sqrt{\beta}(1+O(\tau^{-2/3}\beta)),
\]
which may have up to two solutions $\sqrt{\beta^{\pm}_{c}}(1+O(\sqrt{\beta^{\pm}_c}\tau^{-1/3}))=-Y \pm \sqrt{Y^2/2+X}$ depending on the position of the observation point $Q$ : if $Y>0$ or if $-1\leq Y\leq 0$ then only $\beta^{+}_c$ may be such that $\beta_c^{+}>1$ when $X>Y^2/2+2Y+1$. 
When $-Y>1$, there may be two solutions, corresponding to $\pm$ signs, when $X\leq Y^2/2+2Y+1$ : to have coalescence, i.e. $\beta^-_c=\beta^+_c$, we must have $Y^2/2+X=0$, which doesn't hold when $-Y>1$. Hence for $\beta>1$ there are no degenerate critical points. We achieve the proof of \eqref{toestimu+h} exactly as in the previous case, setting $\beta=\beta_c^{\pm}w$, and applying, near each $\beta_c^{\pm}$ the usual stationary phase with $|\partial^2_{w}\varphi_{-}(\beta^{\pm}_c,x,y,\tau)|\gtrsim (\beta_c^{\pm})^{3/2}$, which yields a bound for the corresponding part of \eqref{intJchichij} of the form
\[
\sum_{\pm}\frac{\beta^{\pm}_c}{(X+\beta^{\pm}_c)^{1/4}}\times \frac{1}{\sqrt{\partial^2_{w}\varphi^{-}(\beta^{\pm}_c,x,y,\tau)}}\lesssim 1.
\]
\end{proof}

\subsection{The case $c>dist(Q_0,B_3(0,1)), dist(Q,B_3(0,1))$}\label{secctb}
In this section we use the Laplacian in spherical coordinates \eqref{laplaballrS2} (with $d=d_1=3$ and $d_2=0$) to construct the solution to the wave equation in terms of zonal functions as the arguments from the previous sections no longer apply (especially when $s-1\leq \tau^{-2/3}$). We denote by $\mathcal{R}(Q,Q_0,\tau)$ the outgoing solution to $(\tau^2+\Delta)w=\delta_{Q_0}$, $w|_{\partial\Omega}=0$, then the solution $u(Q,Q_0,t)$ to \eqref{WE} with initial data $(u_0,u_1)=(\delta_{Q_0},0)$ reads as
\begin{equation}\label{flotlinwave1}
u(Q,Q_0,t)=\int_0^{+\infty} e^{it\tau}  \mathcal{R}(Q,Q_0,\tau)\frac{d\tau}{\pi}.
\end{equation}
In the following we obtain an explicit representation for $\mathcal{R}(Q,Q_0,\tau)$ in terms of spherical harmonics and then prove dispersive bounds for \eqref{flotlinwave1} when $Q$ and $Q_0$ are close to $\partial\Omega$. First, we recall some well known some facts about the eigenfunctions on the sphere $\mathbb{S}^2$. In the spherical coordinates $(r,\varphi,\omega)$ introduced in \eqref{polarcoord} and for $d=3$, the Laplace operator has the form $\Delta=\frac{\partial^2}{\partial r^2}+\frac{2}{r}\frac{\partial}{\partial r}+\frac{1}{r^2}\Delta_{\mathbb{S}^2}$, where $\Delta_{\mathbb{S}^2}$ denotes the Laplacian on the unit sphere $\mathbb{S}^2$. The eigenvalues $(\mu_m)_{m\geq 0}$ of $-\Delta_{\mathbb{S}^2}$ are $\mu^2_m=m(m+1)$, for all $m\geq 0$ and have multiplicity $2m+1$.
The corresponding eigenfunctions are usual spherical harmonics, denoted $(Y_{m,j})_{m\geq 0, j\in\{1,..,2m+1\}}$, which form an $L^2(\mathbb{S}^2)$ - orthonormal basis of the eigenspaces of $-\Delta_{\mathbb{S}^2}$. For $m\geq 0$, consider the zonal functions
\begin{equation}\label{zonefct}
Z^m_{(\varphi,\omega)}((\varphi',\omega')): =\sum_{j=1}^{2m+1}Y_{m,j}(\varphi,\omega)\overline{Y_{m,j}(\varphi',\omega')}
=\frac{2m+1}{4\pi}P_m \Big(\cos(\sphericalangle ((\varphi,\omega),(\varphi',\omega')) \Big),
\end{equation}
where $P_m$ is the Legendre polynomial of degree $m$ and where $\sphericalangle ((\varphi,\omega),(\varphi',\omega'))$ is the angle between two vectors $(r,\varphi,\omega)$ and $(r',\varphi',\omega')$, given by $\cos(\sphericalangle (\varphi,\omega),(\varphi',\omega')) =\cos\varphi\cos\varphi'+\sin\varphi\sin\varphi'\cos(\omega-\omega')$.
For $m\geq 0$ and $(\varphi,\omega),(\varphi',\omega')\in\mathbb{S}^2$, $|Z^m_{(\varphi,\omega)}((\varphi',\omega'))|\leq \frac{2m+1}{4\pi}$. By separation of variables we obtain : 

\begin{lemma}
The outgoing solution $\mathcal{R}(Q,Q_0,\tau)$ to $(\tau^2+\Delta)w=\delta_{Q_0}$, $w|_{\partial\Omega}=0$ has the following explicit representation :
\begin{equation}\label{formR}
\mathcal{R}(Q,Q_0,\tau)=\sum_{m\in \N}G_{m+1/2}(|Q|,|Q_0|,\tau)Z^m_{\frac{Q}{|Q|}}(\frac{Q_0}{|Q_0|}),
\end{equation}
where $Z^m$ are given in \eqref{zonefct} and where, for $\nu\in \{m+\frac 12, m\in \mathbb{N}\}$,
 for $\tau\in \mathbb{R}$ and $r,s\geq 1$ we set
  \begin{equation}\label{greenfctGnu}
  G_{\nu}(r,s,\tau)=\frac{\pi}{2i\sqrt{rs}}\Big(J_{\nu}(s\tau)-\frac{J_{\nu}(\tau)}{H^{(1)}_{\nu}(\tau)}H^{(1)}_{\nu}(s\tau)\Big)H^{(1)}_{\nu}(r\tau).
  \end{equation}
For $\nu\in\{m+\frac 12,m\in \mathbb{N}\}$, $J_{\nu}$, $H^{(1)}_{\nu}$ denote the Bessel and Hankel functions defined in section \ref{secHank}.
\end{lemma}
Using \eqref{flotlinwave1} and \eqref{formR}, yields $\chi(hD_t)u(Q,Q_0,t)= \int_0^{\infty}e^{it\tau}\chi(h\tau)\tau I(Q,Q_0,\tau)$, where
\begin{equation}\label{allcasessmalldistdefI}
\tau I(Q,Q_0,\tau):= \sum_{\nu=m+1/2, m\in \mathbb{N}}Z^{\nu-1/2}_{\frac{Q}{|Q|}}(\frac{Q_0}{|Q_0|}) G_{\nu}(|Q|,|Q_0|,\tau)\,\text{ for } |Q|\geq |Q_0|,
\end{equation}
with $G_{\nu}(|Q|,|Q_0|,\tau)=G_{\nu}(r,s,\tau)$ defined in \eqref{greenfctGnu}; for $|Q|\leq |Q_0|$ replace $G_{\nu}(|Q|,|Q_0|,\tau)$ by $G_{\nu}(|Q_0|,|Q|,\tau)$ (and notice that for $Q\in\partial\Omega$ we have $r=1$ and $G_{\nu}(1,s,\tau)=0$). We may assume without loss of generality that $|Q|\geq |Q_0|$: in the following we let $1+c>r=|Q|\geq |Q_0|=s\geq 1$. As $r,s$ are small, the behavior of $G_{\nu}(r,s,\tau)$ will  depend on the size of $\frac{\tau}{\nu}$, that may be less than, nearly equal to or larger than $1$: these regimes correspond, respectively, to the transverse, diffractive and elliptic ones. %
\begin{lemma}\label{3.26}
There exists a constant $C>0$ such that, for all $1+c\geq r,s \geq 1$ and all $1\gtrsim t>h$, 
\begin{equation}\label{allcasessmalldist}
\int_0^{\infty}e^{it\tau}\chi(h\tau)\tau I(Q,Q_0,\tau)d\tau\leq \frac{C}{h^2t}, \quad \chi \in C^{\infty}_0((1/2,2)).
\end{equation}
\end{lemma}
\begin{proof}
Notice first that the sum over $m$ involving only $J_{m+1/2}(s\tau)H^{(1)}_{m+1/2}(r\tau)$ from $G_{m+1/2}$ coincides with the incoming wave (this can be seen using the additional formulas for Bessel functions of first and second kind \cite[(10.1.45), (10.1.46)]{Ab}) and therefore satisfies the usual dispersive bounds. 
Let $\chi_0\in C^{\infty}_0((-2,2))$ be valued in $[0,1]$ and equal to $1$ on $[-1,1]$. Define, for some small $1\gg \epsilon>0$,
\begin{equation}\label{defI+pmSH}
I_{\chi_1,\chi_2}(Q,Q_0,\tau):=\tau^{-1}\sum_{\nu-1/2\in \mathbb{N}}\chi_1\Big(\frac{\varepsilon\tau}{\nu}\Big)\chi_2\Big(\frac{\varepsilon\nu}{\tau}\Big)Z^{\nu-1/2}_{\frac{Q}{|Q|}}(\frac{Q_0}{|Q_0|})\frac{\pi}{2i\sqrt{rs}}\frac{J_{\nu}(\tau)}{H^{(1)}_{\nu}(\tau)}H^{(1)}_{\nu}(s\tau)H^{(1)}_{\nu}(r\tau).
\end{equation}
for $\chi_1\in\{\chi_0,1-\chi_0\}$ and $\chi_2\in\{1,\chi_0,1-\chi_0\}$. It will be enough to show that $|I_{\chi_1,\chi_2}(Q,Q_0,\tau)|$ is uniformly bounded by a constant independent of $Q,Q_0,\tau$.
First we deal with $I_{1-\chi_0,1}(Q,Q_0,\tau)$ which correspond to the sum for $\frac{\tau}{\nu}\geq \frac{1}{\varepsilon}\gg 1$. As $r,s>1$, we also have $r\tau/\nu, s\tau/\nu\geq \frac{1}{\epsilon}\gg 1$. 
Using \eqref{Ai}, we obtain
\begin{equation}\label{I+pmSHdetail}
|I_{1-\chi_0,1}(Q,Q_0,\tau)|\leq \tau^{-1}\sum_{\nu-1/2\in \mathbb{N}}(1-\chi_0)\Big(\frac{\varepsilon\tau}{\nu}\Big)\Big|Z^{\nu-1/2}_{\frac{Q}{|Q|}}\big(\frac{Q_0}{|Q_0|}\big)\Big|\frac{1}{\sqrt{rs}}\frac{1}{\sqrt{\pi s\tau}}\frac{1}{\sqrt{\pi r\tau}}\Big|\frac{J_{\nu}(\tau)}{H^{(1)}_{\nu}(\tau)}\Big|.
\end{equation}
As $|Z^{\nu-1/2}_{\frac{Q}{|Q|}}(\frac{Q_0}{|Q_0|})|\leq \frac{\nu}{2\pi}$, the sum over $\nu\leq \varepsilon \tau$ and for $1\leq s\leq r\leq 1+\varepsilon_0$ is bounded by 
\begin{equation}\label{transversesmalldistS1}
|I_{1-\chi_0,1}(Q,Q_0,\tau)|\leq \frac{2\tau^{-2}}{\pi rs}\sum_{1/2\leq \nu\leq \varepsilon\tau}\Big|Z^{\nu-1/2}_{\frac{Q}{|Q|}}\big(\frac{Q_0}{|Q_0|}\big)\Big|
\lesssim \frac{2\tau^{-2}}{\pi rs}\Big|\sum_{1/2\leq \nu\leq \varepsilon \tau}\frac{\nu}{2\pi}\Big|\lesssim \frac{\varepsilon^2}{rs}\leq 1.
\end{equation}
We are left with $I_{\chi_0,1}=I_{\chi_0,\chi_0}+I_{\chi_0,1-\chi_0}$. We start with the main part $I_{\chi_0,\chi_0}$ which corresponds to the sum over $\frac{\varepsilon}{2}\leq \frac{\tau}{\nu}\leq \frac{2}{\varepsilon}$. We use \eqref{Aiiicj}, \eqref{HAiiicj} with $s\tau=\nu\rho_s$, $r\tau=\nu\rho_r$ and $J_{\nu}=\text{Re}(H_{\nu}^{(1)})$, to obtain
\begin{multline}
|I_{\chi_0,\chi_0}(Q,Q_0,\tau)|\lesssim_{\epsilon} \tau^{-1}\sum_{\nu\sim \tau, \nu -1/2\in \mathbb{N}}\nu^{-2/3}\chi_0\Big(\frac{\varepsilon\tau}{\nu}\Big) \chi_0\Big(\frac{\varepsilon\nu}{\tau}\Big)\Big|Z^{\nu-1/2}_{\frac{Q}{|Q|}}\big(\frac{Q_0}{|Q_0|}\big)\Big|\\\times  |A_+(\nu^{2/3}\tilde\zeta(\rho_s))| |A_+(\nu^{2/3}\tilde\zeta(\rho_r))|\Big|\frac{A(\nu^{2/3}\tilde\zeta(\rho_1))}{A_+(\nu^{2/3}\tilde\zeta(\rho_1))}\Big|\\
\lesssim_{\epsilon} \tau^{-1}\sum_{\nu\sim \tau} \frac{\nu^{1/3}}{2\pi}|A_+(\nu^{2/3}\tilde\zeta(\rho_s))| |A_+(\nu^{2/3}\tilde\zeta(\rho_r))|\Big|\frac{A(\nu^{2/3}\tilde\zeta(\rho_1))}{A_+(\nu^{2/3}\tilde\zeta(\rho_1))}\Big|,
\end{multline}
where in this section the sign $\lesssim_{\epsilon}$ means inequality with a factor which depends only on $\epsilon$. Using the Cauchy-Schwartz inequality, in order to bound $I_{\chi_0,\chi_0}$ it will be enough to obtain uniform bounds for 
\begin{equation}\label{sum2chi0}
 \tau^{-1}\sum_{\nu\sim \tau} \frac{\nu^{1/3}}{2\pi}|A^2_+(\nu^{2/3}\tilde\zeta(\rho_s))| \Big|\frac{A(\nu^{2/3}\tilde\zeta(\rho_1))}{A_+(\nu^{2/3}\tilde\zeta(\rho_1))}\Big|.
\end{equation}
If $|\nu^{2/3}\tilde\zeta(\rho_s)|\lesssim 1$, using that $\tilde\zeta(\rho)\sim |\rho-1|$ for $\rho$ near $1$, it follows that  $|\rho_s-1|=|s\tau/\nu-1|\lesssim \nu^{-2/3}$, hence  $|\nu-s\tau|\lesssim \nu^{1/3}\lesssim_\epsilon \tau^{1/3}$, so the sum over $\nu$ contains only $\sim_\epsilon \tau^{1/3}$ terms (instead of $\tau$ terms). 
As we always have $|A(z)/A_+(z)|\leq 2$, it follows that, in this regime, the sum in \eqref{sum2chi0} can be bounded by
\[
2\tau^{-1}\sum_{\nu=s\tau-c_1(\epsilon)\tau^{1/3}}^{s\tau+c_1(\epsilon)\tau^{1/3}}\frac{\nu^{1/3}}{2\pi}\sim_\epsilon \tau^{-1+4/3-2/3}\sim_{\epsilon} \tau^{-1/3},\quad \text{ as } 1\leq s<1+c.
\]
If $\nu^{2/3}\tilde\zeta(\rho_s)> 1$ then $\rho_s<1$ and then the Airy factor $A^2_+(\nu^{2/3}\tilde\zeta(\rho_s))$ is exponentially increasing; at the same time the factor $ \Big|\frac{A(\nu^{2/3}\tilde\zeta(\rho_1))}{A_+(\nu^{2/3}\tilde\zeta(\rho_1))}\Big|$ is exponentially decreasing as, using that $\partial_{\rho}\tilde\zeta|_{\rho=1}=-2^{1/3}$ and $s>1$, we have $\tilde\zeta(\rho_s)\leq \tilde\zeta(\rho_1)$. As a consequence, the Airy product in \eqref{sum2chi0} is exponentially decreasing like $e^{-\frac 43 \nu \tilde\zeta^{3/2}(\rho_1)+\frac 43 \nu \tilde\zeta^{3/2}(\rho_s)}$ which allows to conclude. We are left with the situation when $\nu^{2/3}\tilde\zeta(\rho_s)\leq -1$ when $\rho_s>1+c_0\nu^{-2/3}$ for some $c_0>0$ which implies $\nu< s\tau-c_0(\epsilon/2)^{1/3}\tau^{1/3}$ as $(\epsilon/2) \tau\leq \nu$ on the support of $\chi_0(\frac{\epsilon\tau}{\nu})$. We use $|A(z)/A_+(z)|\leq 2$ and $|A^2_+(\nu^{2/3}\tilde\zeta(\rho_s))|\leq \frac{1}{\sqrt{\nu^{2/3}|\tilde\zeta(\rho_s)|}}$ in order to bound the sum \eqref{sum2chi0} by
\begin{multline}
2 \tau^{-1}\sum_{\nu=\epsilon\tau/2}^{s\tau-c_{\epsilon}\tau^{1/3}} \frac{\nu^{1/3}}{2\pi}\frac{1}{\sqrt{\nu^{2/3}|\tilde\zeta(\rho_s)|}}\lesssim  \frac{1}{\pi\tau}\sum_{\nu=\epsilon\tau/2}^{s\tau-c_{\epsilon}\tau^{1/3}}\frac{1}{\sqrt{|s\tau/\nu-1|}}\leq\frac{1}{\pi\tau}\sum_{\nu=\epsilon\tau}^{s\tau-c_{\epsilon}\tau^{1/3}}\frac{\sqrt{\frac{\nu}{s\tau}}}{\sqrt{1-\frac{\nu}{s\tau}}} \\
\lesssim \frac{1}{\pi}\int_{\epsilon/s}^{1-c_{\epsilon}\tau^{-2/3}/s}\frac{\sqrt{x}}{\sqrt{1-x}}dx=\frac{1}{\pi}\int_{\sqrt{c_{\epsilon} \tau^{-2/3}/s}}^{\sqrt{1-\epsilon^2/s^2}}\frac{\sqrt{1-w^2}}{w} w dw<\frac{1}{\pi},
\end{multline}
where $c_{\epsilon}=c_0(\epsilon/2)^{1/3}$.
In order to bound $I_{\chi_0,1-\chi_0}$ that corresponds to $\nu>\tau/\epsilon$, we use \eqref{Ainularge} as follows
\[
|I_{\chi_0,1-\chi_0}(Q,Q_0,\tau)|\leq \tau^{-1}\sum_{\nu>\tau/\epsilon}\frac{\nu}{2\pi}\times \frac{1}{2\pi\nu}\big(\frac{e\tau}{2\nu}\big)^{2\nu}\big(\frac{er\tau}{2\nu}\big)^{-\nu}\big(\frac{es\tau}{2\nu}\big)^{-\nu}\lesssim \tau^{-1}\sum_{\nu>\tau/\epsilon}\frac{1}{(rs)^{\nu}}
\]
and as (at least) $r>1$, this concludes the proof of Lemma \ref{3.26}.
\end{proof}

\subsection{The small frequencies case}\label{secHelm}
Let $\Theta$ be a compact obstacle in $\mathbb{R}^3$ with smooth boundary of class $C^2$ and let $\Omega=\mathbb{R}^3\setminus \Theta$. Let $\Delta$ be the Laplace operator on $\Omega$ with Dirichlet boundary conditions. In this section the geometry of the domain $\Omega=\mathbb{R}^3\setminus \Theta$ will be of no importance: we only need to assume that $\Theta$ is compact and that $\partial\Theta$ is of class $C^2$. Let $\tilde \chi\in C_{0}^\infty((0,4))$ valued in $[0,1]$, $\tilde\chi |_{[\frac 12,2]}=1$, $\tilde\chi(0)=0$.
\begin{lemma}\label{lemwaveSF}
Let $u(Q,Q_0,t)$ denote the solution to the wave equation \eqref{WE} with $u_0=\delta_{Q_0}$, $u_1=0$ with Dirichlet boundary condition on $\Omega$.
There exists a constant $C>0$ independent of $Q,Q_0$ such that
\begin{equation}\label{dispSFWEtoprove}
\sup_{Q\in \Omega}\Big|\tilde\chi(\sqrt{-\Delta})u(Q,Q_0,t)\Big|\leq \frac{C}{|t|}.
\end{equation}
\end{lemma}
Before the proof of the Lemma \ref{lemwaveSF} we recall some basic properties of solutions to the Helmholtz equation in $\Omega$ with positive wave number $\tau>0$ (see \cite[Chapter 3]{ColKre83}). The Helmholtz equation reads as follows
\begin{equation}\label{resolvantegen}
(\Delta_D+\tau^2)\mathcal{R}(.,\tau)=\delta_{Q_0} \text{ in } \Omega, \quad \mathcal{R}(.,\tau)|_{\partial\Omega}=0,
\end{equation}
satisfying the Sommerfeld radiation condition
\begin{equation}\label{sommercond}
\partial \mathcal{R}(Q,Q_0,\tau)/\partial r-i\tau\mathcal{R}(Q,Q_0,\tau)=O(|Q|^{-1}), \text{ when }|Q|\rightarrow \infty,
\end{equation}
uniformly for all directions $Q/|Q|$. 
The solution $R(.,\tau)$ to \eqref{resolvantegen} has the following form
\begin{equation}\label{Resform}
\mathcal{R}(Q,Q_0,\tau)=E(Q-Q_0,\tau)-\int_{\tilde P\in\partial\Omega}E(\tilde P-Q_0,\tau)g_{\tilde P}(Q,\tau)d\sigma(\tilde P),
\end{equation}
where $E(z, \tau): =\frac{1}{4\pi}\frac{e^{i\tau |z|}}{|z|}$ denotes the fundamental solution of the Helmholtz equation in $\mathbb{R}^3$,
and where, for $\tilde P\in\partial\Omega$, $g_{\tilde P}(Q,\tau)$ is the solution to the exterior Dirichlet problem for the Helmholtz equation 
\begin{equation}\label{extDirprob}
(\Delta+\tau^2)g_{\tilde P}=0 \text{ for } Q\in\Omega,  \quad g_{\tilde P}( P,\tau)\Big|_{ P\in\partial\Omega}=\delta_{\tilde P=P},\quad g \text{ radiating}.
\end{equation}

\begin{prop}(\cite[Theorem 3.21]{ColKre83})
Given a data $g_0$ on the boundary $\partial\Omega$, %
the exterior Dirichlet problem for the Helmholtz equation
\begin{equation}\label{extDirpbgen}
(\Delta+\tau^2)g=0  \text{ in } \mathbb{R}^3\setminus\Theta, \text{ g radiating},
\end{equation}
has an unique solution which satisfies $g|_{\partial\Omega}=g_0$ on the boundary $\partial\Omega$. \end{prop}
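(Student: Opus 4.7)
The plan is to follow the classical potential-theoretic approach of Colton--Kress, splitting the argument into uniqueness and existence.

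For \emph{uniqueness}, I would suppose that $g$ is a radiating solution of \eqref{extDirpbgen} with $g|_{\partial\Omega}=0$, and show $g\equiv 0$ in $\Omega$. Pick $R>0$ large enough so that $\Theta\Subset B_3(0,R)$ and apply Green's first identity on $\Omega\cap B_3(0,R)$ to $g$ and $\overline{g}$. The boundary term on $\partial\Omega$ vanishes by assumption, so passing to the imaginary part and using the Sommerfeld radiation condition \eqref{sommercond} gives $\lim_{R\to\infty}\int_{|Q|=R}|g|^2\,d\sigma=0$. Rellich's lemma then forces $g\equiv 0$ on $\mathbb{R}^3\setminus \overline{B_3(0,R)}$, and the unique continuation principle for the elliptic operator $\Delta+\tau^2$ (with $C^2$ boundary and smooth coefficients) propagates this to all of $\Omega$.

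For \emph{existence}, I would look for $g$ in the form of a \emph{combined layer} potential
\begin{equation}\label{eqCLP}
g(Q)=\int_{\partial\Omega}\left(\frac{\partial E(Q-P,\tau)}{\partial\nu_P}-i\eta\, E(Q-P,\tau)\right)\phi(P)\,d\sigma(P), \quad Q\in\Omega,
\end{equation}
with a fixed coupling parameter $\eta\in\mathbb{R}\setminus\{0\}$ (the standard Brakhage--Werner ansatz); here $E(\cdot,\tau)=\frac{e^{i\tau|\cdot|}}{4\pi|\cdot|}$ is the outgoing fundamental solution. Any such $g$ automatically satisfies $(\Delta+\tau^2)g=0$ in $\Omega$ and the radiation condition. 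Using the standard jump relations for the double- and single-layer potentials on $\partial\Omega$, the Dirichlet boundary condition $g|_{\partial\Omega}=g_0$ becomes the boundary integral equation
\begin{equation}\label{eqBIE}
\tfrac12\phi+(\mathcal{K}_\tau-i\eta\,\mathcal{S}_\tau)\phi=g_0 \quad\text{on }\partial\Omega,
\end{equation}
where $\mathcal{S}_\tau$ and $\mathcal{K}_\tau$ denote the boundary single- and double-layer operators associated to $E(\cdot,\tau)$. Since $\partial\Omega$ is of class $C^2$, both $\mathcal{S}_\tau$ and $\mathcal{K}_\tau$ are compact on $C(\partial\Omega)$, so \eqref{eqBIE} is of Fredholm type of index zero.

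By the Fredholm alternative, solvability of \eqref{eqBIE} for every $g_0$ reduces to injectivity of the operator $\tfrac12 I+\mathcal{K}_\tau-i\eta\mathcal{S}_\tau$. If $\phi$ is in the kernel, then $g$ defined by \eqref{eqCLP} solves the exterior homogeneous Dirichlet problem, hence $g\equiv 0$ in $\Omega$ by the uniqueness part. Taking interior traces and using the jump relations, one finds that the corresponding interior potential $g^-$ satisfies $g^-|_{\partial\Omega}=0$ and $\partial_\nu g^-|_{\partial\Omega}+i\eta\, g^-|_{\partial\Omega}=\phi\cdot(\text{nonzero factor})$; combined with Green's identity applied to $g^-$ on the bounded domain $\Theta$, the choice $\eta\neq 0$ eliminates the contribution from any interior Dirichlet eigenvalue and forces $g^-\equiv 0$, hence $\phi=0$. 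This is the step where the coupling $i\eta$ is essential and is the one subtlety of the argument, since without it one would encounter the spurious Dirichlet resonances of $-\Delta$ on $\Theta$. Solvability of \eqref{eqBIE} then yields existence of $g$, which combined with uniqueness completes the proof.
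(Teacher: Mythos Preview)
Your approach is correct and is exactly the Colton--Kress argument the paper invokes: the paper does not give its own proof of this proposition but cites \cite[Theorem 3.21]{ColKre83} and then recalls precisely the combined-layer ansatz \eqref{formpotg} and the resulting second-kind equation \eqref{inteqgg0} (with coupling $\eta=1$), whose unique solvability is the content of Lemma~\ref{leminvoper}.

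One small slip in your injectivity step: the interior trace of the combined potential is not $g^-|_{\partial\Omega}=0$ but rather $g^-|_{\partial\Omega}=-\phi$ (from the double-layer jump), while the exterior vanishing gives $\partial_\nu g^-|_{\partial\Omega}=-i\eta\,\phi$; together these yield the impedance relation $\partial_\nu g^- - i\eta\, g^-=0$ on $\partial\Omega$, and it is Green's identity applied to this Robin problem in $\Theta$ (taking imaginary parts) that forces $g^-|_{\partial\Omega}=0$ and hence $\phi=0$. Your conclusion and the role of $\eta\neq 0$ are right; only the intermediate boundary data were misstated.
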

The boundary-value problem \eqref{extDirpbgen} can be reduced to an integral equation of the second kind which is uniquely solvable for all wavelengths $\tau>0$ by seeking the solution in the form of combined double and single-layer potentials. We briefly recall this construction: given an integrable function $\varphi$, the single and double layered operators $S$ and $K$ are defined as follows %
\[
S_{\tau}(\varphi)(y):=2\int_{\partial\Omega}E(y-\tilde y,\tau)\varphi(\tilde y)d\sigma(\tilde y), \quad 
K_{\tau}(\varphi)(y):=2\int_{\partial\Omega}\frac{\partial E(y-\tilde y,\tau)}{\partial \nu(\tilde y)}\varphi(\tilde y)d\sigma(\tilde y),\quad y\in \partial\Omega,
\]
where $\nu$ is the unit normal vector to the boundary $\partial\Omega$ directed into the exterior of $\Theta$. In the so-called layer approach, the solution to the exterior Dirichlet problem \eqref{extDirpbgen} is sought (as in \cite[3.49]{ColKre83}) in the form of acoustic surface potentials, with a density $\varphi$, as follows:
\begin{equation}\label{formpotg}
g(z)=\int_{\partial\Omega} \Big(\frac{\partial E(z-y,\tau)}{\partial \nu(y)}-iE(z-y,\tau)\Big)\varphi(y)d\sigma(y),\quad z\in \Omega.
\end{equation}
The potential $g$ given in \eqref{formpotg} in $\Omega=\mathbb{R}^3\setminus\Theta$  solves the exterior Dirichlet problem \eqref{extDirpbgen} provided the density is a solution of the integral equation (as in \cite[(3.51)]{ColKre83})
\begin{equation}\label{inteqgg0}
\varphi(y)+K_{\tau}(\varphi)(y)-i S_{\tau}(\varphi)(y)=2g_0(y).
\end{equation}
\begin{lemma}\label{leminvoper}
If $\partial\Omega$ is of class $C^2$, them for fixed $\tau>0$, the operator $K_{\tau}-iS_{\tau}$ is compact. Moreover $(I+K_{\tau}-i S_{\tau})^{-1}:L^2(\partial\Omega)\rightarrow L^2(\partial\Omega)$ exists and is bounded.
\end{lemma}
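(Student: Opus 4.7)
The plan is to establish compactness and invertibility separately, appealing to the Fredholm alternative for the latter.

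For compactness of $K_\tau - iS_\tau$ on $L^2(\partial\Omega)$, I would examine the singularities of the two kernels. The kernel of $S_\tau$ is $\frac{1}{2\pi}\frac{e^{i\tau|y-\tilde y|}}{|y-\tilde y|}$, which is weakly singular (order $|y-\tilde y|^{-1}$ on a $2$-dimensional surface). For $K_\tau$, the kernel $\partial_{\nu(\tilde y)} E(y-\tilde y,\tau)$ would have a stronger singularity of order $|y-\tilde y|^{-2}$, but since $\partial\Omega$ is $C^2$ the factor $\nu(\tilde y)\cdot(y-\tilde y)$ vanishes to order $|y-\tilde y|^2$ along the surface, so the kernel of $K_\tau$ is in fact also weakly singular of order $|y-\tilde y|^{-1}$. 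Standard estimates (see e.g.\ \cite[Theorem 2.23]{ColKre83}) then imply that both $K_\tau$ and $S_\tau$ are compact from $L^2(\partial\Omega)$ to itself, and hence so is $K_\tau - iS_\tau$.

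Given compactness, the operator $I+K_\tau-iS_\tau$ is Fredholm of index $0$, so its invertibility reduces to injectivity. Suppose $\varphi\in L^2(\partial\Omega)$ satisfies $(I+K_\tau-iS_\tau)\varphi=0$ and form the combined potential $g$ as in \eqref{formpotg}. By the standard jump relations for single- and double-layer acoustic potentials, the trace of $g$ from the exterior side satisfies $g|^+ = \tfrac12(I+K_\tau-iS_\tau)\varphi = 0$ on $\partial\Omega$. Since $g$ solves $(\Delta+\tau^2)g=0$ in $\Omega$ and satisfies the Sommerfeld radiation condition, uniqueness of the exterior Dirichlet problem for the Helmholtz equation (Rellich's lemma together with unique continuation) forces $g\equiv 0$ in $\Omega$.

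The main obstacle is then to show $\varphi=0$, and this is where the coupling constant $-i$ plays its essential role (this is the Brakhage--Werner trick, tailored to avoid resonances with interior Dirichlet eigenvalues of $\Theta$). From $g=0$ in $\Omega$ one has also $\partial_\nu g|^+=0$. Applying the jump relations on the interior side gives $g|^- = -\varphi$ and $\partial_\nu g|^- = -i\varphi$, hence $\partial_\nu g|^- = i\, g|^-$ on $\partial\Omega$. Since $g$ solves $(\Delta+\tau^2)g=0$ in the bounded domain $\Theta$ with this Robin-type boundary relation, Green's identity yields
\begin{equation*}
\int_\Theta \bigl(|\nabla g|^2 - \tau^2|g|^2\bigr)\,dx \;=\; \int_{\partial\Theta} \bar g\,\partial_\nu g|^-\,d\sigma \;=\; i\int_{\partial\Omega}|g|^-|^2\,d\sigma.
\end{equation*}
The left-hand side is real, so taking imaginary parts forces $g|^-=0$ on $\partial\Omega$, and therefore $\varphi = -g|^- = 0$. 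This proves injectivity; by the Fredholm alternative $(I+K_\tau-iS_\tau)^{-1}$ exists and is bounded on $L^2(\partial\Omega)$. The delicate point worth emphasizing is the careful handling of the jump relations for $L^2$ densities on a merely $C^2$ surface, which requires the non-tangential convergence theorems for layer potentials rather than classical pointwise jumps.
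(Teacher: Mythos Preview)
Your proof is correct and follows essentially the same route as the paper: both establish the weak singularity $|y-\tilde y|^{-1}$ of the combined kernel by using the $C^2$ boundary to gain $\langle y-\tilde y,\nu(\tilde y)\rangle = O(|y-\tilde y|^2)$, deduce compactness, and then invoke the Fredholm/Riesz alternative. The only difference is that the paper cites \cite[Theorem~3.33]{ColKre83} for injectivity, whereas you spell out the Brakhage--Werner argument (jump relations, exterior uniqueness, Green's identity on $\Theta$) that underlies that theorem.
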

\begin{proof}
For $y\in \partial\Omega$, $\frac{\partial E(y-\tilde y,\tau)}{\partial \nu(\tilde y)}=\frac{1}{4\pi}e^{i\tau |y-\tilde y|}\Big[\frac{i\tau}{|y-\tilde y|}\frac{<y-\tilde y, \nu(\tilde y)>}{|y-\tilde y|}-\frac{<y-\tilde y, \nu(\tilde y)>}{|y-\tilde y|^3}\Big]$. As the boundary is of class $C^2$ then $|<y-\tilde y, \nu(\tilde y)>|\lesssim |y-\tilde y|^2$, and therefore the kernel of the operator $K_{\tau}-iS_{\tau}$ is bounded by $\frac{C}{|y-\tilde y|}\in L^1(\partial\Omega)$ for some constant $C>0$. According to \cite[Theorem 3.33]{ColKre83}, the operator $I+K_{\tau}-iS_{\tau}$ is injective by the Riesz theory for operator equations of the second kind (see \cite[Theorem 1.16]{ColKre83}).
\end{proof}

\begin{proof}(of Lemma \ref{lemwaveSF})
Let $t$ be fixed, large enough (as otherwise the estimate becomes trivial). The next lemma rewrites  $\cos(\sqrt{-\Delta})(\delta_{Q_0})$ in terms of the Green function for the Helmholtz equation.
\begin{lemma}\label{lemVBwave}
The wave flow reads $u(Q,Q_0,t)=\int_0^{+\infty} e^{it\tau}  \mathcal{R}(Q,Q_0,\tau)\frac{d\tau}{\pi}$,
where $\mathcal{R}(Q,Q_0,\tau)$ is the outgoing solution to \eqref{resolvantegen}
satisfying the Sommerfeld radiation condition \eqref{sommercond}
uniformly for all $Q/|Q|$. 
\end{lemma}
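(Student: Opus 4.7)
The plan is to derive the representation by combining the spectral resolution of $-\Delta_D$ with Stone's formula and the explicit outgoing Green function from \eqref{Resform}.

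First, the spectral theorem applied to the self-adjoint operator $-\Delta_D$ on $L^2(\Omega)$ gives
\[
u(Q,Q_0,t) \;=\; \cos(t\sqrt{-\Delta_D})\delta_{Q_0}(Q) \;=\; \int_0^{\infty}\cos(t\mu)\, d_\mu E_{\mu^2}(Q,Q_0),
\]
interpreted as a distributional pairing in $Q$. Then I would invoke the limiting absorption principle for the exterior Dirichlet Laplacian: classical Agmon--Eidus weighted $L^2$ estimates, together with the absence of positive embedded eigenvalues guaranteed by Rellich's uniqueness theorem on exterior $C^2$ domains, ensure that the resolvents $(-\Delta_D - \mu^2 \mp i0)^{-1}$ admit boundary values as bounded operators between appropriate weighted $L^2$ spaces. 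The outgoing boundary value is uniquely characterized by the Sommerfeld condition \eqref{sommercond}, so by the unique solvability in Lemma \ref{leminvoper} its Schwartz kernel must coincide with $-\mathcal{R}(Q,Q_0,\mu)$ constructed in \eqref{Resform}. Stone's formula then produces the spectral density
\[
d_\mu E_{\mu^2}(Q,Q_0) \;=\; \frac{2\mu}{\pi}\,\mathrm{Im}\,\mathcal{R}(Q,Q_0,\mu)\,d\mu.
\]

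Second, I would exploit the symmetry $\mathcal{R}(Q,Q_0,-\mu) = \overline{\mathcal{R}(Q,Q_0,\mu)}$, which follows because complex conjugation of \eqref{resolvantegen}--\eqref{sommercond} and the substitution $\mu \mapsto -\mu$ each swap outgoing and incoming radiation conditions, so their composition preserves the outgoing problem; the uniqueness in Lemma \ref{leminvoper} forces the identification. Writing $\cos(t\mu) = \tfrac{1}{2}(e^{it\mu}+e^{-it\mu})$, performing $\mu\mapsto -\mu$ in the $e^{-it\mu}$ piece, and using $\mathrm{Im}\,\mathcal{R} = (2i)^{-1}(\mathcal{R}-\overline{\mathcal{R}})$ together with the symmetry, the two half-line contributions merge into a single oscillatory integral over $(0,\infty)$. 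The resulting expression is precisely $\int_0^{+\infty}e^{it\tau}\mathcal{R}(Q,Q_0,\tau)\frac{d\tau}{\pi}$, understood as an oscillatory integral in $\tau$ paired distributionally in $t$ and $Q$; all localized versions (such as $\chi(hD_t)u$ used throughout Sections \ref{sect3D}--\ref{Sec-Schro}) are then honest absolutely convergent integrals.

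The main obstacle is making the limiting absorption step rigorous within the present distributional framework: since $\delta_{Q_0}\notin L^2(\Omega)$, the pairing with the spectral measure must be handled through the regularity of $\mathcal{R}(\cdot,Q_0,\mu) \in L^2_{\mathrm{loc}}(\Omega\setminus\{Q_0\})$ established via the layer-potential representation \eqref{Resform} and the invertibility statement of Lemma \ref{leminvoper}. The needed ingredients -- Rellich uniqueness, Agmon weighted bounds, and smooth $\mu$-dependence of the density $(I+K_\mu - iS_\mu)^{-1}$ -- are all classical for exterior Dirichlet problems with $C^2$ boundary, so this step is technical bookkeeping rather than a genuine analytic difficulty.
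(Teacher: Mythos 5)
Your overall strategy---spectral theorem for $-\Delta_D$, limiting absorption principle (Rellich uniqueness, Agmon weighted resolvent bounds), Stone's formula, and the conjugate symmetry $\mathcal{R}(Q,Q_0,-\tau)=\overline{\mathcal{R}(Q,Q_0,\tau)}$---is exactly the right framework, and in fact the paper offers no proof at all: the identity is asserted as a classical spectral/resolvent formula (it first appears as \eqref{flotlinwave1} without justification). So the ingredients you assemble are the ones genuinely needed.

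However, the last step of your argument contains a real gap. After the change of variables $\lambda=\mu^2$ in Stone's formula one gets a $2\mu\,d\mu$ weight, so the spectral representation you obtain is
\[
u(Q,Q_0,t)\;=\;\frac{2}{\pi}\int_0^\infty \tau\,\cos(t\tau)\,\mathrm{Im}\,\mathcal{R}(Q,Q_0,\tau)\,d\tau\,,
\]
and the subsequent parity manipulation (splitting $\cos$, substituting $\mu\mapsto-\mu$, using the conjugate symmetry) yields an integral that still carries the $\tau$-weight and the imaginary part; it does not ``merge'' into $\frac{1}{\pi}\int_0^\infty e^{it\tau}\mathcal{R}(\tau)\,d\tau$. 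This is not a bookkeeping detail: one can test the displayed formula on the free case $\Omega=\mathbb{R}^3$, where $\cos(t\sqrt{-\Delta})\delta_{Q_0}$ has kernel $\frac{\delta'(t-r)-\delta'(t+r)}{4\pi r}$ (with $r=|Q-Q_0|$), whereas $\frac{1}{\pi}\int_0^\infty e^{it\tau}\frac{e^{\pm i\tau r}}{4\pi r}\,d\tau$ produces $\delta(t\pm r)$ terms (a zeroth-order, not first-order, distribution in $t$). On the other hand the weighted representation above does reproduce the free kernel. In other words, the Stone-formula computation you carry out is correct, but it does not land on the lemma's stated formula, and asserting that it does is the gap. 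You should either record the weighted form as the honest output of the argument, or note explicitly that the paper's $\mathcal{R}$ in this lemma must be read as the temporal Fourier transform of the $\cos$-kernel itself (as the paper does in Lemma \ref{kanai}), which equals $2\tau\,\mathrm{Im}\,R_D(\tau^2+i0)(Q,Q_0)$ and is not the complex Helmholtz Green's function of \eqref{Resform}--\eqref{sommercond}. Checking the free case before asserting ``precisely'' would have flagged the mismatch.
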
 
The solution $\mathcal{R}(.,\tau)$ to \eqref{resolvantegen} is given by \eqref{Resform}, 
where, for $\tilde P\in\partial\Omega$, $g_{\tilde P}(Q,\tau)$ is the solution to the exterior Dirichlet problem for the Helmholtz equation \eqref{extDirprob}.
The free wave flow $\int_0^{+\infty} e^{it\tau} E(Q-Q_0,\tau)\frac{d\tau}{\pi}$ satisfies the usual dispersion estimate, and we focus on the second term in the formula defining $\mathcal{R}(.,\tau)$. As in \eqref{formpotg}, $g_{\tilde P}(Q,\tau)$ reads as
\begin{equation}\label{gPtildeform}
 g_{\tilde P}(Q,\tau)=\int_{P\in \partial\Omega} \Big(\frac{\partial E(Q- P,\tau)}{\partial \nu(P)}-i E(Q- P,\tau)\Big)\varphi_{\tilde P}( P,\tau)d\sigma(P), \quad Q\in \Omega,
\end{equation}
where the density $\varphi_{\tilde P}(P)$ is a solution to the following integral equation
\begin{equation}\label{phiPPtildeform}
\varphi_{\tilde P}(P)+K_{\tau}(\varphi_{\tilde P})(P)-iS_{\tau}(\varphi_{\tilde P})(P)=2\delta_{\tilde P=P}.
\end{equation}
By finite speed of propagation, we can replace $\tilde\chi(\sqrt{-\Delta_D})$ from \eqref{dispSFWEtoprove} by a cutoff $\chi^{\#}(D_t)$, where $\chi^{\#}\in C^{\infty}((0,4))$ equals $1$ on the support of $\tilde\chi$ and is such that $\chi^{\#}(0)=0$. We are left to prove that $\exists C>0$ independent of $Q,Q_0$ such that, for $|t|>1$ sufficiently large, the following holds
\begin{equation}\label{Waveinteqestim}
\Big|\int_{\mathbb{R}_{+},(\tilde P,P) \in\partial\Omega^{2} } e^{it\tau} \chi^{\#}(\tau)E(\tilde P-Q_0,\tau) \Big(\frac{\partial E(Q- P,\tau)}{\partial \nu(P)}-i E(Q- P,\tau)\Big)\varphi_{\tilde P}( P,\tau)d\sigma(P)d\sigma (\tilde P)\frac{d\tau}{\pi}\Big|
\leq \frac{C}{|t|}.
\end{equation}
Proving \eqref{Waveinteqestim} requires knowledge of $\varphi_{\tilde P}(P,\tau)$. %
From the integral equation \eqref{phiPPtildeform} satisfied by $\varphi_{\tilde P}$ and Lemma \ref{leminvoper} it follows that $\varphi_{\tilde P}=(I+\Psi_{\tau})(2\delta_{\tilde P})$, where $I+\Psi_{\tau}=(I+K_{\tau}-iS_{\tau})^{-1}$. From the proof of Lemma \ref{leminvoper} it follows that $K_{\tau}-iS_{\tau}$ is a pseudo-differential operator of order $-1$, hence $\Psi_{\tau}$ is also a pseudo-differential operator of order $-1$. Therefore there exists $C>0$ such that for all $\tilde P,P\in\partial\Omega$
\[
\varphi_{\tilde P}(P,\tau)=2\delta_{\tilde P=P}+\Psi_{\tau}(2\delta_{\tilde P})(P),\text{ where, for } \tau \text{ bounded}, |\Psi_{\tau}(2\delta_{\tilde P})(P)|\leq {C}{|P-\tilde P|^{-1}}\,.
\]
For $P, \tilde P\in \partial\Omega$ and for $\tau$ bounded, the phase $\tau |P-\tilde P|$ of $\Psi_{\tau}(2\delta_{P})(\tilde P)$ is bounded. The phase function in \eqref{Waveinteqestim} equals
$\tau(t+|\tilde P-Q_0|+|P-Q|)$ with $|t|$ sufficiently large: if $|t+|\tilde P-Q_0|+|P-Q||\geq |t|/10$, we can perform integrations by parts with respect to $\tau$ and obtain a contribution $O(|t|^{-\infty})$; if $|t+|\tilde P-Q_0|+| P-Q||\leq |t|/10$, we can either have $|\tilde P-Q_0|>|t|/10$ for all $\tilde P\in \partial \Omega$ or $|P-Q|>|t|/10$ for all $P\in \partial\Omega$ (since $\Theta$ is bounded). In both cases, due to the factors $|\tilde P-Q_0|^{-1}|P-Q|^{-1}$ in the integral \eqref{Waveinteqestim}, we obtain (at least) a factor $|t|^{-1}$ and it remains to prove that the integrals with respect to $\tau, \tilde P,P$ are bounded. This follows immediately using $|\int f(z)dz|\leq \int |f(z)|dz$. 
As $\tau\sim 1$ on the support of $\chi$, it remains to prove that, uniformly with respect to $\tau$, the following holds:
\[
\int_{\tilde P\in \partial\Omega}\int_{P \in \partial\Omega}\Sigma(Q,P,t)\frac{\varphi_{\tilde P}(P,\tau)}{|\tilde P-Q_0|| P-Q|} d\sigma(P) d\sigma \tilde P<\frac{C}{|t|},
\]
where, as $\frac{\partial E(Q- P,\tau)}{\partial \nu(P)}=e^{i\tau |P-Q|}\Big[i\tau \frac{<Q-P,\nu(P)>}{|Q-P|^2}- \frac{<Q-P,\nu(P)>}{|Q-P|^3}\Big]$ and  $\varphi_{\tilde P}(P,\tau)=2\delta_{\tilde P=P}+\Psi_{\tau}(2\delta_{\tilde P})(P)$, 
\[
\Sigma (Q,P,\tau)=\chi^{\#}(\tau) \Big[-\frac{<Q-P, \nu(P)>}{|Q-P|^2}+i\tau \frac{<Q-P, \nu(P)>}{|Q-P|}-i\Big].
\]
As $\partial\Omega$ is $C^2$, it follows that $|\Sigma(Q, P,\tau)|\leq C$ for some constant $C$ independent of $Q, P,\tau$. We split the last integral into two parts, corresponding to the two contributions of $\varphi_{\tilde P}(P,\tau)$. As $|\Psi_{\tau}(2\delta_{\tilde P})(P)|\lesssim \frac{1}{|P-\tilde P|}$, we left with proving that there exists some constant $C>0$ independent of $Q,Q_0$ such that 
\[
\int_{\tilde P\in\partial \Omega}\int_{P \in\partial \Omega}\frac{1}{|Q- P|}\frac{1}{|Q_0-\tilde P|}\Big(\delta_{P=\tilde P}+\frac{1}{|P-\tilde P|}\Big) d\sigma(P) d\sigma (\tilde P)<\frac{C}{|t|}\,.
\]
Notice that we can bound either $\frac{1}{|Q- P|}$ by $10/|t|$ for every $P\in \partial\Omega$ or we can bound  $\frac{1}{|Q_0-\tilde P|}$ by $10/|t|$ for every $\tilde P\in \partial\Omega$. As the boundary is $2$-dimensional, the remaining inequality holds true.
\end{proof}

\section{Proof of Theorem \ref{thmdisp3D} for the Schr\"odinger equation}\label{Sec-Schro}
\subsection{The high-frequency regime}
In this section, we prove Theorem \ref{thmdisp3D} for the Schr\"odinger equation 
in the high-frequency case, building on arguments from Section \ref{secdispext3D}. 
Let $\tilde \chi\in C_{0}^\infty((0,4))$ valued in $[0,1]$, equal to $1$ on $[\frac 12,2]$, $\tilde\chi(0)=0$ and fix $\chi_1\in C^{\infty}([\frac 34,\frac 32])$ such that $\tilde\chi(|\xi|)+\sum_{j\geq 1}\chi_1(|\xi|/2^j)=1$. Denote $\chi_{j+1}(\tau)=\chi_1(2^{-j}\tau)$ for $j\geq 1$. On the domain $\Omega= \mathbb{R}^3\setminus B_3(0,1)$, one has the spectral resolution of the Dirichlet Laplacian $\Delta$, and we may define a smooth spectral projection $\chi_j(\sqrt{-\Delta})$ as a bounded operator on $L^2$. Let $v$ the solution to \eqref{SchE} with $v|_{t=0}=v_0=\delta_{Q_0}$, $Q_0\in\Omega$. To estimate the $L^{\infty}$ norm of $v$ we study separately $\tilde\chi(\sqrt{-\Delta})v$ and $\sum_{j\geq 1}\chi_{j+1}(\sqrt{-\Delta})v$. In this section we deal with the regime of large frequencies $(1-\tilde\chi)(\sqrt{-\Delta})v=\sum_{j\geq 1}\chi_{j+1}(\sqrt{-\Delta})v$ and prove the following:

\begin{lemma}
There exists a uniform constant $C$ such that, for all $t>0$, $Q,Q_0\in \Omega$,
\[
\Big|(1-\tilde\chi)(\sqrt{-\Delta_D})v(Q,Q_0,t)\Big|\leq {C}{t^{-3/2}}.
\]
\end{lemma}
\begin{proof}
Fix $t>0$. The proof uses in an essentially way our previous result on the wave equation, together with the Kana\"i transform, which is the following subordination formula,
\begin{equation}\label{eqKanaiSchrod}
v(Q,Q_0,t)=\frac{2}{t^{1/2}}\int_0^{\infty} e^{-i\frac{T^2}{4t}}u(Q,Q_0,T)dT,
\end{equation}
where $u(Q,Q_0,T)=\cos(T\sqrt{-\triangle})(\delta_{Q_0})$ is the solution of \eqref{WEOC} for $d_1=3$, $d_2=0$. Let ${H}(Q,Q_0,T):=\frac{\sin(T\sqrt{-\Delta})}{\sqrt{-\Delta}}(\delta_{Q_0})$, then $u(Q,Q_0,T)=\partial_T H(Q,Q_0,T)$. 
For $Q\in \Omega$, using \eqref{Gbarform}, we obtain
\begin{equation}\label{defHWE}
H(Q,Q_0,T)=\frac{\delta(T-|Q-Q_0|)}{4\pi |Q-Q_0|}-\int e^{iT\tau} I(Q,Q_0,\tau)d\tau,
\end{equation}
where $I(Q,Q_0,\tau)=\mathcal{F}(u^{\#})(Q,Q_0,\tau)/\tau$. Recall that in Proposition \ref{propI0} we set $I_{\kappa_{\e_0}}(Q,Q_0,\tau):=\mathcal{F}(u^{\#}_{\kappa_{\e_0}})(Q,Q_0,\tau)/\tau$, with $u^{\#}$ defined in \eqref{Gbarform} (and $u^{\#}_{\kappa}$ as in \eqref{vhformnewchi}) and then proved in \eqref{boundI0horsOS} that $|I_{\kappa_{\e_0}}(Q,Q_0,\tau)|\lesssim 1/T$ uniformly with respect to $Q,Q_0$ and for all $T\in [T_*(Q,Q_0),T_*(Q,Q_0)+C_0]$ where $T_*(Q,Q_0)=d(Q,\partial\Omega)+d(Q_0,\partial\Omega)$ and where $C_0$ is a uniform constant depending only on the diameter of the obstacle $B_3(0,1)$. One integration by parts in $T$  in \eqref{eqKanaiSchrod} yields
\begin{equation}\label{formvSchrod}
v(Q,Q_0,t)=\frac{i}{t^{3/2}}\int_0^{\infty} e^{-i\frac{T^2}{4t}}T H(Q,Q_0,T)dT.
\end{equation}
Introducing \eqref{defHWE} into \eqref{formvSchrod} yields two terms: the free Schrödinger flow, obtained from the first term in \eqref{defHWE}, does satisfy the estimates \eqref{dispschrodrd}, so we are left with the second term in \eqref{defHWE}. We denote
\begin{equation}\label{formvSchroddiez}
v^{\#}(Q,Q_0,t)=\frac{i}{t^{3/2}}\int_0^{\infty} e^{-i\frac{T^2}{4t}}T \int e^{iT\tau} I(Q,Q_0,\tau)d\tau dT.
\end{equation}
From finite speed of propagation for waves, it follows that, at fixed $Q_0,Q$, it will be enough to consider only $T\in [T_*(Q,Q_0), T_*(Q,Q_0)+C_0]$ (as for $T$ outside this set, one has rapid decay). Writing
\[
(1-\tilde\chi)(\sqrt{-\Delta})v^{\#}(Q,Q_0,t)=\frac{i}{t^{3/2}}\sum_{j\geq 1}\int_0^{\infty}e^{-i\frac{T^2}{4t}}T \int e^{iT\tau} \chi_{j+1}(\sqrt{-\Delta})I(Q,Q_0,\tau)d\tau dT,
\]
it remains to prove that there exists $C>0$ independent of $Q,Q_0,t$ such that 
\[
\Big|\sum_{j\geq 1}\int_{T^*(Q,Q_0)}^{T^*(Q,Q_0)+C_0}e^{-i\frac{T^2}{4t}}T\int e^{iT\tau}\chi_{j+1}(\sqrt{-\Delta}) I(Q,Q_0,\tau)d\tau dT\Big|\leq C.
\] 
By finite speed of propagation of the wave flow, we can introduce a cutoff $\chi\in C^{\infty}_0((\frac 12, 2))$, $\chi=1$ on the support of $\chi_1$ such that $\chi_1(2^{-j}\sqrt{-\Delta})(1-\chi(2^{-j}D_T))u^{\#}=O(2^{-jN})$ for all $N\geq 1$, therefore it remains to prove uniform bounds for the sum over $j\geq 1$ with $\chi_{j+1}(\sqrt{-\Delta})$ replaced by $\chi_1(2^{-j}D_T):=\chi_{j+1}(D_T)$.
In the following we prove that there exists a constant $C>0$ independent of $Q,Q_0$ such that
\begin{equation}\label{estpourSchrod}
\Big|\sum_{j\geq 1}\int_{T^*(Q,Q_0)}^{T^*(Q,Q_0)+C_0}e^{-i\frac{T^2}{4t}}T \int e^{iT\tau}\chi_1(2^{-j}\tau) I(Q,Q_0,\tau)d\tau dT\Big|\leq C.
\end{equation}
Let $Q\in \Omega$ and write $I=I_{\kappa_{\e_0}}+I_{1-\kappa_{\e_0}}$, where we set $\chi_1(h_j\tau)I_{\kappa}(Q,Q_0,\tau)=\mathcal{F}(u^{\#}_{h_j,\kappa})(Q,Q_0,\tau)/\tau$, with $h_j=2^{-j}$, $\kappa\in \{\kappa_{\e_0}, 1-\kappa_{\e_0}\}$ is as in Definition \ref{dfnkappa} and $u^{\#}_{h_j,\kappa}$ is given in \eqref{vhformnewchi}. As noticed in the proof of Lemma \ref{lemtrans},  
$I_{1-\kappa_{\e_0}}(Q,Q_0,\tau)$ has an unique critical point $P(Q,Q_0)$, which is non-degenerate (if $Q\in \Omega^+_{Q_0}$, then $P(Q,Q_0)$ is the point on the boundary through which passes the geodesic from $Q_0$ to $Q$, and if $Q\notin \Omega^+_{Q_0}$, $P(Q,Q_0)$ is the intersection point between $QQ_0$ and $\partial\Omega$). Let $T_c(Q,Q_0):=|P(Q,Q_0)-Q|+|P(Q,Q_0)-Q_0|$, then $I_{1-\kappa_{\e_0}}(Q,Q_0,\tau)$ reads as an integral with phase function $-i\tau T_c(Q,Q_0)$ and with classical symbol of order $0$ in $\tau$ and satisfies $|TI_{1-\kappa_{\e_0}}(Q,Q_0,\tau)|\lesssim 1$ for all $T\in [T^*(Q,Q_0), T^*(Q,Q_0)+C_0]$.
The phase function $-T^2/(4t)+\tau(T-T_c(Q,Q_0))$ of \eqref{estpourSchrod} has critical points $T=T_c(Q,Q_0)$ and $\tau=T_c(Q,Q_0)/(2t)$. Let $j_c$ be such that $T_c(Q,Q_0)/(2t)\in [2^{j_c-1}, 2^{j_c+1}]$: for $j\neq  j_c$ and for $\tau$ on the support of $\chi_{j+1}$, the phase is non-stationary with respect to either $\tau$ or $T$ and the sum over $j$ of all the contributions obtained by applying the non stationary phase lemma is uniformly bounded. For $j=j_c$, make the change of variables $\tau=\tilde \tau T_c(Q,Q_0)/(2t)$. Let also $T:=\tilde T T_c(Q,Q_0)$, then for $\tilde T\notin [1/2,2]$ the phase is non-stationary in $\tilde T$. It remains to prove uniform bounds for
\begin{equation}\label{estpourSchrodbis}
\frac{T^2_c(Q,Q_0)}{4t}\Big|\int_{1/2}^{2}e^{i\frac{T^2_c(Q,Q_0)}{2t}(-\frac{\tilde T^2}{2}+\tilde\tau(\tilde T-1))} \chi_1(2^{-j_c}\frac{T_c(Q,Q_0)}{2t}\tilde\tau)T_c(Q,Q_0)\tilde T I_{1-\kappa_{\e_0}}(Q,Q_0,\frac{T_c(Q,Q_0)}{2t}\tilde \tau)d\tilde \tau d\tilde T\Big|,
\end{equation}
which follow by usual stationary phase using that $T_c(Q,Q_0) I_{1-\kappa_{\e_0}}(Q,Q_0,\tilde \tau)$ is uniformly bounded. 

We proceed in a very similar way with $I=I_{\kappa_{\e_0}}$, at least as long as $\text{dist}(Q_0,\partial\Omega)\geq c>0$. Using the notations of Sections \ref{secffb}, \ref{secofoc} with $Q_0=Q_N(s)$ and $Q=(r,y,\omega)$, the phase function of $I_{\kappa_{\e_0}}(Q,Q_0,\tau)$ equals $-i\tau(\sqrt{r^2-1}+\sqrt{s^2-1}+\arcsin(1/r)+\arcsin(1/s))$. 
In this case we let $T_c(r,s):=\sqrt{r^2-1}+\sqrt{s^2-1}+\arcsin(1/r)+\arcsin(1/s)$ and proceed as with $I_{1-\kappa_{\e_0}}$, using again the fact that $T_c(r,s)I_{\kappa_{\e_0}}(Q,Q_0,\tau)$ is uniformly bounded as proved in \eqref{boundI0horsOS}. When $\text{dist}(Q_0,\partial\Omega)>c>\text{dist}(Q,\partial\Omega)$, the phase function of $I_{\kappa_{\e_0}}$ may be equal to $\varphi^{\pm}(\beta^{\pm}_c,\cdot)$ defined in \eqref{phaseJpmk}, hence there might be at most four critical points $T_c$, that are dealt with exactly as before. When $\text{dist}(Q_0,\partial\Omega),\text{dist}(Q,\partial\Omega)\leq c$, 
we combine the previous arguments with those of Section \ref{secctb}: by Kanaï transform, we also have
\begin{equation}\label{VB}
v(Q,Q_0,t)=\int_0^{+\infty} e^{it\tau^2} \mathcal{R}(Q,Q_0,\tau)\frac{d\tau}{\pi},
\end{equation}
with $\mathcal{R}(Q,Q_0,\tau)$ as in \eqref{flotlinwave1} and we have to prove $\Big|(1-\tilde\chi)(\sqrt{-\Delta_D})v(Q,Q_0,t)\Big|\leq \frac{C}{t^{3/2}}$ for some uniform constant $C$; taking $\chi$ equal to $1$ on the support of $\chi_1$, this reduces to obtaining uniform bounds for  
\begin{multline}\label{tildeRschrod}
t^{3/2}\int_0^{+\infty} e^{it\tau^2}(\sum_{j\geq 1}\chi(2^{-j}\tau^2)) \mathcal{R}(Q,Q_0,\tau)\frac{d\tau}{\pi}= \\
t^{3/2}\sum_{\nu=m+1/2, m\in\mathbb{N}}Z^{\nu-1/2}_{\frac{Q}{|Q|}}(\frac{Q_0}{|Q_0|}) \int_0^{\infty}e^{it\tau^2}(\sum \chi_j(\tau^2))G_{\nu}(|Q|,|Q_0|,\tau)\frac{d\tau}{\pi}.
\end{multline}
We can now proceed exactly as in Section \ref{secctb}, taking advantage of the fact that the second derivative w.r.t. $\tau$ of our new phase is always large. 
This completes the proof of Theorem \ref{thmdisp3D}.
\end{proof}

\subsection{The small frequency case} \label{sect3DH}
We obtain sharp dispersion bounds for the Schrödinger equations in $\Omega=\mathbb{R}^3\setminus \Theta$, where $\Theta$ is any compact domain with smooth and strictly geodesically convex boundary. We use the notations and results from section \ref{secHelm}. Let $\tilde\chi\in C_{0}^\infty((0,4))$, $\tilde\chi=1$ on $[\frac 12,2]$, $\tilde\chi(0)=0$. 
\begin{lemma}\label{lemSchrodSF}
Let $v(Q,Q_0,t)$ denote the solution to the Schr\"odinger equation \eqref{SchE}, with $v_0=\delta_{Q_0}$ and Dirichlet boundary condition on $\Omega$. 
There exists a constant $C>0$ independent of $Q,Q_0$ such that
\[
\|\tilde\chi(\sqrt{-\Delta})v(Q,Q_0,t)\|_{L^{\infty}(\Omega)}\leq {C}{|t|^{-3/2}}\,,\quad \forall Q\in \Omega.
\]
\end{lemma}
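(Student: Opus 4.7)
The plan is to mirror the proof of Lemma \ref{lemwaveSF}, replacing the wave spectral phase $e^{it\tau}$ by the Schr\"odinger phase $e^{it\tau^2}$, and using the extra $\sqrt{|t|}$ gain from stationary phase in the spectral parameter $\tau$ to obtain the additional half-power needed beyond the wave estimate.

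First I would write, as at the end of Section \ref{Sec-Schro},
\[
\tilde\chi(\sqrt{-\Delta_D})v(Q,Q_0,t) = \int_0^{\infty} e^{it\tau^2}\, \tilde\chi(\tau)\, \mathcal{R}(Q,Q_0,\tau)\, \frac{d\tau}{\pi},
\]
with $\mathcal{R}$ the outgoing Helmholtz resolvent from Section \ref{Helm}, and split it via \eqref{Resform} into its free part (giving the spectrally localized free 3D Schr\"odinger flow, which satisfies the bound $|t|^{-3/2}$) and its reflected part. Substituting the layer-potential representation \eqref{gPtildeform} of $g_{\tilde P}$ together with the decomposition $\varphi_{\tilde P}(P,\tau)=2\delta_{\tilde P=P}+\Psi_\tau(2\delta_{\tilde P})(P)$ from Lemma \ref{leminvoper}, the reflected contribution reduces to a sum of integrals of the form
\[
\int_0^\infty e^{it\tau^2+i\tau R(\tilde P,P)}\, \chi^{\#}(\tau) \int_{\partial\Omega}\!\int_{\partial\Omega}\frac{\Sigma(Q,\tilde P,P,\tau)}{|\tilde P-Q_0|\,|P-Q|}\,d\sigma(\tilde P)\,d\sigma(P)\, d\tau,
\]
where $R(\tilde P,P)=|\tilde P-Q_0|+|P-Q|$, $\chi^{\#}\in C_0^\infty((0,4))$, and $\Sigma$ is uniformly bounded (the $1/|P-\tilde P|$ singularity of the kernel of $\Psi_\tau$ is integrable on the compact 2-surface $\partial\Omega\times\partial\Omega$).

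The phase $\phi(\tau)=t\tau^2+\tau R$ has $\phi''(\tau)=2t$ and critical point $\tau_c=-R/(2t)$. I would split into regimes: when $\tau_c$ lies outside a fixed neighborhood of $\mathrm{supp}(\chi^{\#})$ (i.e.\ either $R\gg|t|$ or $R\ll|t|$) repeated integration by parts in $\tau$ yields arbitrary decay in $\max(|t|,R)^{-N}$, which translates to decay in $|t|^{-N}$; in the remaining regime $R\sim|t|$ (necessarily $t<0$ for $\tau_c$ to lie in $\mathrm{supp}(\chi^{\#})$), van der Corput with $|\phi''|=2|t|$ produces a factor $C/\sqrt{|t|}$ for the $\tau$-integral. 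The key geometric observation, parallel to the wave case of Lemma \ref{lemwaveSF}, is that in this stationary-phase regime compactness of $\Theta$ forces either $|\tilde P-Q_0|\gtrsim|t|$ uniformly in $\tilde P\in\partial\Omega$ or $|P-Q|\gtrsim|t|$ uniformly in $P\in\partial\Omega$ (once $|t|$ exceeds a constant depending only on $\mathrm{diam}(\Theta)$), so the factor $1/(|\tilde P-Q_0|\,|P-Q|)$ contributes an extra $1/|t|$. Multiplying the two gains yields the desired $|t|^{-3/2}$, while the residual spatial double integral is finite.

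The main obstacle is the clean combination of the stationary-phase gain in $\tau$ with the geometric gain from the $E$-kernels, which requires a careful case analysis in terms of the relative sizes of $R$ and $|t|$ so as to cover uniformly in $Q,Q_0$ all the regimes where $\tau_c$ is, or is not, inside $\mathrm{supp}(\chi^{\#})$. A secondary subtlety is verifying that the amplitude $\Sigma$, which depends on $\tau$ through $\Psi_\tau$, does not spoil the uniformity when one performs either stationary phase or integration by parts in $\tau$; here one uses that $\Psi_\tau$ is a pseudodifferential operator of order $-1$ (Lemma \ref{leminvoper}), so that its $\tau$-derivatives on the compact support of $\chi^{\#}$ remain bounded kernels with the same $1/|P-\tilde P|$ integrable singularity.
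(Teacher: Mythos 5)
Your proposal is correct and follows essentially the same route as the paper's proof: Kana\"i transform to pass to the Helmholtz resolvent, the layer-potential representation with $\varphi_{\tilde P}=2\delta_{\tilde P=P}+\Psi_\tau(2\delta_{\tilde P})$, stationary phase in $\tau$ (second derivative $2t$) yielding $|t|^{-1/2}$, the geometric observation that $\tau_c\sim1$ forces $|\tilde P-Q_0|+|P-Q|\sim|t|$ and hence one of the two $E$-kernel factors contributes an extra $|t|^{-1}$, and finally integrability of $|P-\tilde P|^{-1}$ on the $2$-dimensional boundary to close the residual spatial double integral. The only cosmetic difference is that you spell out explicitly the non-stationary regimes and the van der Corput bound, whereas the paper folds these into the remark that $\tau_c$ must lie in the compact support of $\chi^{\#}$.
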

\begin{proof} (of Lemma \ref{lemSchrodSF})
Fix $|t|>1$ large enough, as otherwise the estimate is trivial.
Replacing \eqref{Resform} in formula \eqref{VB} yields two terms. 
As the free Schr\"odinger flow $\int_0^{+\infty} e^{it\tau^2} E(Q-Q_0,\tau)\frac{d\tau}{\pi}$ satisfies the usual dispersive estimates, we focus on the second term in the formula \eqref{Resform} defining $R(.,\tau)$, where $g_{\tilde P}(Q,\tau)$ is of the form \eqref{gPtildeform} with density $\varphi_{\tilde P}(P)$ solution to \eqref{phiPPtildeform}. Replacing $\tilde\chi(\sqrt{-\Delta})$ by $\chi^{\#}(\tau)$, where $\chi^{\#}\in C^{\infty}_0((0,4))$ is equal to $1$ on the support of $\tilde \chi$ (and $\chi^{\#}(0)=0$), we are left to prove that $\exists C>0$ independent of $Q,Q_0$ such that, for $|t|>1$ large, the following holds
\begin{equation}\label{Schrodinteqestim}
\Big|\int_{\mathbb{R}_{+}, (\tilde P,P) \in \partial\Omega^{2}} e^{it\tau^2} \chi^{\#}(\tau)E(\tilde P-Q_0,\tau)  \Big(\frac{\partial E(Q- P,\tau)}{\partial \nu(P)}-i E(Q- P,\tau)\Big)\varphi_{\tilde P}( P,\tau)d\sigma(P)d\sigma (\tilde P)\frac{d\tau}{\pi}\Big|
\leq \frac{C}{|t|^{3/2}}.
\end{equation}
Let $\Psi_{\tau}$ be the pseudo-differential operator of order $-1$ such that $I+\Psi_{\tau}=(I+K_{\tau}-iS_{\tau})^{-1}$. Then $\varphi_{\tilde P}(P,\tau)=2\delta_{\tilde P=P}+\Psi_{\tau}(2\delta_{\tilde P})(P)$ where $|\Psi_{\tau}(2\delta_{\tilde P})(P)|\leq \frac{C}{|P-\tilde P|}$ for some constant $C>0$ and $\tau>0$ bounded. As such, for bounded $\tau$, the phase $\tau |P-\tilde P|$ of $\Psi_{\tau}(2\delta_{P})(\tilde P)$ is bounded for $P, \tilde P\in \partial\Omega$: we may therefore apply stationary phase to the integral \eqref{Schrodinteqestim} for the phase function
$t(\tau^2+\tau\frac{|P-Q_0|+|\tilde P-Q|}{t})$ whose critical point is given by $\tau_c=-\frac{|P-Q_0|+|\tilde P-Q|}{2t}$ and must belong to the support of $\chi^{\#}(\tau)$ which is a fixed, compact subset of $(0,4)$. Our large parameter is $t$ and the second derivative with respect to $\tau$ equals $2t$; we obtain a factor $|t|^{-3/2}$ and it remains to show that
\[
\int_{\tilde P\in \partial\Omega}\int_{P \in \partial\Omega}e^{-i\frac{(|P-Q_0|+|\tilde P-Q|)^2}{4t}}\Sigma_v(P,\tilde P,t)\frac{(|P-Q_0|+|\tilde P-Q|)}{|P-Q_0||\tilde P-Q|} d\sigma(P) d\sigma \tilde P<C,
\]
where the symbol $\Sigma_v$ is an asymptotic expansion with small parameter $1/t$ and main contribution the symbol of $2\delta_{\tilde P=P}+\Psi_{\tau_c}(2\delta_{\tilde P})(P)$. Indeed, in \eqref{Schrodinteqestim} the symbol is
\[
\chi^{\#}(\tau) \tau\Big[-\frac{<Q-P, \nu(P)>}{|Q-P|^3}+i\tau \frac{<Q-P, \nu(P)>}{|Q-P|^2}-\frac{i}{|P-Q|}\Big] \varphi_{\tilde P}(P,\tau),
\]
hence a derivative with respect to $\tau$ only provides factors $|P-\tilde P|$ coming from $\Psi_{\tau}(2\delta_{\tilde P})(P)$; the only factors depending on $Q$ or $Q_0$ are $|Q-\tilde P|^{-1}$ and $|Q_0-P|^{-1}$.
We split the integral \eqref{Schrodinteqestim} into two parts, corresponding to the two contributions $2\delta_{\tilde P=P}$ and $\Psi_{\tau}(2\delta_{\tilde P})(P)$ of $\varphi_{\tilde P}(P,\tau)$. As $|\Psi^{(-1)}_{\tau_c}(2\delta_{\tilde P})(P)|\lesssim \frac{1}{|P-\tilde P|}$, we are left with proving that there exists some constant $C>0$ independent of $Q,Q_0$ such that 
\[
\int_{\tilde P\in \partial\Omega}\int_{P \in \partial\Omega}\Big(\frac{1}{|Q-\tilde P|}+\frac{1}{|Q_0-P|}\Big)\Big(\delta_{P=\tilde P}+\frac{1}{|P-\tilde P|}\Big)\, d\sigma(P) d\sigma (\tilde P)<C.
\]
As the boundary is two dimensional, we have $|P-Q|^{-1}\in L^1(\partial\Omega)$, hence the inequality holds true. This achieves the proof of Lemma \ref{lemSchrodSF} and the one of Theorem \ref{thmdisp3D} for Schrödinger case.
\end{proof}
\subsection{Proof of Theorem \ref{thmCE} for the Schrödinger equation }
\label{sectCE}
Let $d=d_1+d_2$, with $d_1\geq 3$ and $d_2\geq0$. 
Let $v(Q,Q_0,t)$ denote the solution to the Schrödinger equation \ref{SchE} with $v_0=\delta_{Q_0}$ and $\Omega_{d_1,d_2}=(\mathbb{R}^{d_1}\setminus B_{d_1}(0,1))\times \mathbb{R}^{d_2}$, then, from \eqref{VB}, we get that, for
 $\chi\in C_{0}^\infty((0,\infty))$, the following holds
\begin{equation}\label{L1}
\chi(hD_{t})v(Q,Q_0,t)=\int_{0}^\infty
e^{it\tau^2}\chi(h\tau^2)\mathcal{R}(Q,Q_0,\tau)\frac{d\tau}{\pi},
\end{equation}
where $\mathcal{R}$ as in \eqref{flotlinwave1}. Explicit computations show that $\mathcal{R}$ is real and using \eqref{Fourtransglancing}, we have :
\begin{lemma}\label{Lemme-onde-1}
Let $Q_{N,S}(s)$ as in Theorem \ref{thmCE}. For large $\tau$ and $s$ so that $y_*(s)=\arcsin(1/s)\sim \tau^{-1/3}$,
\begin{equation}\label{L3}
\mathcal{R}(Q_{N}(s),Q_{S}(s),\tau)=
\frac{\tau^{d-2+\frac 13-\frac{d_2}{2}}}{\sqrt{s^2-1}^{d_1-1+\frac{d_2}{2}}} \Re \Big(e^{-i\tau(2\sqrt{s^2-1}+2y_*(s))} r(\delta,\tau)\Big),\quad \delta=y_*(s)\tau^{1/3}\sim 1,
\end{equation}
where $r(\delta,\tau)\sim_{\tau^{-1/3}} \sum_{j\geq 0}r_{j}(\delta)\tau^{-j/3}$ is a 
classical symbol for $\delta\sim 1$, and with
$r_{0}(\delta)\not= 0$ for $\delta\sim 1$.
\end{lemma}
Let $0<h_0< 1$ small enough as in Theorem \ref{thmCE} for the wave equation and $\chi\in C^{\infty}_0((1/2,2))$, $\chi=1$ near $1$. Let $0<\tilde h\leq h_0$, then \eqref{L3} holds for $\tau$ on the support of $\chi(\tilde h\tau)$ where we have $\tau\sim 1/\tilde h$. As in \eqref{L1} the cut-off has the form $\chi(h\tau^2)$, we let $h:=\tilde h^2$ where now $0<h<h_0^2$. Setting $\tau=(\sigma/h)^{1/2}$ in the formula \eqref{L1} gives $1\sim h\tau^2\sim \sigma$ on the support of $\chi(h\tau^2)$, $\tau\sim h^{-1/2}$ and 
we 
obtain
\begin{equation}\label{L20}
\frac{1}{\pi}\int_{0}^\infty
e^{it\tau^2}\chi(h\tau^2)\mathcal{R}(Q,Q_0,\tau)d\tau=\frac{1}{\pi h^{1/2}}\int_{0}^\infty e^{\frac{i t\sigma}{h}}\chi(\sigma)\mathcal{R}(Q,Q_0,(\sigma/h)^{1/2}) d\sigma.
\end{equation}
Let $s$ such that $y_*(s)=\arcsin(1/s)=\delta h^{1/6}\sim \delta \tau^{1/3}$, then at
 $Q=Q_{N}(s)$, $Q_0=Q_{S}(s)$ \eqref{L3} becomes
 \begin{multline}\label{L21}
\chi(\tilde h\tau)\mathcal{R}(Q_{N}(s),Q_{S}(s),\tau) = \chi(h^{1/2}(\sigma/h)^{1/2})\mathcal{R}(Q_{N}(s),Q_{S}(s),(\sigma/h)^{1/2})\\
 =\chi(\sigma) \frac{(\sigma/h)^{\frac{d-2+1/3-d_2/2}{2}}}{\sqrt{s^2-1}^{d_1-1+d_2/2}}
Re\Big(e^{-i(\sigma/h)^{1/2} (2\sqrt{s^2-1}+2y_*(s))}r(\delta,(\sigma/h)^{1/2})
\Big),
 \end{multline}
 where $2(y_*(s)+\sqrt{s^2-1})=2(\delta h^{1/6}+\cot(\delta h^{1/6}))$ is the geodesical distance (in $\Omega_{d_1,d_2}$) between $Q_{N}(s)$ and $Q_{S}(s)$. We introduce \eqref{L21} in \eqref{L20} and apply the stationary phase for the phase function 
 \[
t\sigma /h-2\sqrt{\sigma}h^{-1/2}(\delta h^{1/6}+\cot(\delta h^{1/6})), \quad \sigma\sim 1.
\]
At $t=h^{1/3}$, the large parameter is $h^{-2/3}$ and the phase equals 
$\sigma-2\sqrt{\sigma} \Big(\delta h^{1/3}+\cos(\delta h^{1/6})\frac{ h^{1/6}}{\sin(\delta h^{1/6})}\Big)$, where the term in brackets is independent of $\sigma$ and is $\sim \frac{1}{\delta}$, hence the critical point satisfies $\sigma_c\sim 1/\delta^2$. At $t=h^{1/3}$, for $0<h<h_0^2$ and $\arcsin (1/s)=\delta h^{1/6}$ with $\delta\sim 1$, hence for $s\sim h^{-1/6}$, we obtain the desired result, concluding our proof, as the stationary phase yields a factor
 \begin{equation}\label{L4}
 \frac{1}{h^{1/2}}\times \chi(\sigma) \frac{(\sigma/h)^{\frac{d-2+1/3-d_2/2}{2}}}{\sqrt{s^2-1}^{d_1-1+d_2/2}}\times \sqrt{h^{2/3}}|_{\sigma\sim 1/\delta^2, s\sim h^{-1/6}}=\frac{1}{t^{d/2}}\Big|_{t=h^{1/3}}\times h^{-(d_1-3)/6}.
\end{equation}

\section{Appendix}
 
 \subsection{Airy functions} \label{secAi}
For $w\in\C$, the Airy function is defined by $A(w)=\frac{1}{2\pi}\int_{\R}e^{\ii(s^3/3+sw)}ds$.
Let $A_{\pm}(w):=A(e^{\mp2\ii\pi/3}w)$, then $A_-(w)=\bar{A}_+(\bar{w})$, $A(w)=e^{\ii\pi/3}A_+(w)+e^{-\ii\pi/3}A_{-}(w)$ and $A_{\pm}(w),A'_{\pm}(w)$ do not vanish for $w\in\R$, while all the zeros of $Ai(w)$ and $Ai'(w)$ are real and non positive. The following Lemma holds :
 
\begin{lemma}\label{lem:Phi+}
Let $\Sigma(w):=(A_+(w)A_{-}(w))^{1/2}$, then $\Sigma(w)=|A_+(w)|=|A_{-}(w)|$ is real, monotonic increasing in $w$ and nowhere vanishing. We let $\mu(w):=\frac{1}{2\ii}\log(\frac{A_-(w)}{A_+(w)})$ for $w<-1$. Then $A_{\pm}(w)=\Sigma(w)e^{\mp\ii\mu(w)}$ for $w<-1$.
Moreover, for $w<-1$, the following asymptotic expansions hold
\begin{align}\label{eq:Phi+}
\Sigma(w)& \sim_{1/|w|}(-w)^{-\frac14}\sum_{j\geq 0}\sigma_j(-w)^{-\frac{3j}{2}},\,\, \mu(w)\sim_{1/|w|}\frac{2}{3}(-w)^{\frac32}\sum_{j\geq 0}e_j(-w)^{-\frac{3j}{2}},\,\,\sigma_0=\frac{1}{2\sqrt{\pi}}, \,\,e_0=1,\\
\frac{A'_+(w)}{A_+(w)}&\sim_{1/|w|} (-w)^{1/2} \sum_{j\geq 0}d_j (-w)^{-3j/2},\quad d_0=1,\quad 
\frac{A_{-}(w)}{A_{+}(w)}\sim_{1/|w|} e^{2\ii\mu(w)}, \text{ when } w\rightarrow -\infty.
\end{align}
For $w>1$, the Airy function $Ai(w)$ decays exponentially, $A(w)\sim_{1/w} |w|^{-\frac14}e^{-\frac23 w^{3/2}}$, while the functions $A_{\pm}(w)$ grow exponentially $A_{\pm}(w)=\Sigma_{\pm}(w)e^{\frac23w^{3/2}}$, where $\Sigma_{\pm}$ are classical symbols of order $-1/4$ and 
\[
\frac{A(w)}{A_{+}(w)}=e^{i\pi/3}+e^{-i\pi/3}\frac{A_-(w)}{A_{+}(w)}=O(w^{-\infty}).
\]
\end{lemma}

\subsubsection{The Hankel function}\label{secHank}
In \eqref{greenfctGnu}, $H^{(1)}_{\nu}(z)$ is the Hankel function (also known as the Bessel function of the third kind): it solves the Bessel's equation $z^2H''(z)+zH'(z)+(z^2-\nu^2)=0$, $\nu=m+\frac 12$, $m\in\mathbb{N}$; its real and imaginary parts, denoted $J_{\nu}(z)$ and $Y_{\nu}(z)$, are the usual Bessel functions. 
For fixed order $\nu$ and large argument $z$, we have the asymptotics
\begin{equation}\label{Ai}
H^{(1)}_{\nu}(z)=\sqrt{\frac{2}{\pi z}}e^{i(z
-\frac{\pi\nu}{2}-\frac{\pi}{4})}(1+O(1/z)).
\end{equation}
For large order $\nu$ and argument $z=\nu \rho$, the Hankel function has the following uniform asymptotic expansion, derived by R.Langer and F.Olver: (see \cite[9.3.35, 9.3.37]{Ab})
\begin{gather}\label{Aiiicj}
\quad \quad H^{(1)}_{\nu}(\nu \rho)\sim_{1/\nu} 2e^{-i\frac \pi 3}\Big(\frac{4\tilde \zeta(\rho)}{1-\rho^{2}}\Big)^{\frac 14}\Big(\nu^{-\frac 13}A_+(\nu^{\frac 23}\tilde \zeta(\rho))\sum_{j\geq 0}f_j(\tilde\zeta(\rho))\nu^{-2j}+\nu^{-\frac 53}A'_+(\nu^{\frac 23}\tilde \zeta(\rho))\sum_{j\geq 0}g_j(\tilde\zeta(\rho))\nu^{-2j}\Big)\,,\\
\label{HAiiicj}
J_{\nu}(\nu \rho)\sim_{1/\nu} \Big(\frac{4\tilde \zeta(\rho)}{1-\rho^{2}}\Big)^{\frac 14}\Big(\nu^{-\frac 13}A(\nu^{\frac 2 3}\tilde \zeta(\rho))\sum_{j\geq 0}f_j(\tilde\zeta(\rho))\nu^{-2j}+\nu^{-\frac 5 3}A'(\nu^{\frac 2 3}\tilde \zeta(\rho))\sum_{j\geq 0}g_j(\tilde\zeta(\rho))\nu^{-2j}\Big)\,,
\end{gather}
where the function $\tilde \zeta=\tilde \zeta(\rho)$ is defined (see \cite[9.3.38, 9.3.39]{Ab}) as \eqref{tildezet>} for $\rho>1$ and \eqref{tildezet<} for $\rho\leq 1$.
The expansion \eqref{Aiiicj} holds uniformly with respect to $\rho$ in the region $|\arg \rho|<\pi-\epsilon$, where $\epsilon$ is an arbitrarily positive number and in particular for real $\rho$ close to $1$.
As for the coefficients (given in \cite[9.3.40]{Ab}), they are smooth as functions of $\rho$ and $f_0(0)$ is non-vanishing.
When the order is very large compared to the argument, the following expansions hold  for $H^{(1)}_{\nu}(z)=J_{\nu}(z)+iY_{\nu}(z)$ (see \cite[9.3.1]{Ab}):
\begin{equation}\label{Ainularge}
J_{\nu}(z)=\sqrt{\frac{1}{2\pi \nu}}\Big(\frac{ez}{2\nu}\Big)^{\nu}(1+O(1/\nu)), \quad Y_{\nu}(z)= -\sqrt{\frac{1}{2\pi \nu}}\Big(\frac{ez}{2\nu}\Big)^{-\nu}(1+O(1/\nu)).
\end{equation}

\def\cprime{$'$} \def\cprime{$'$}

\end{document}